\documentclass[11pt]{amsart}

\allowdisplaybreaks
\usepackage[left=1in,top=1in,right=1in,bottom=1in]{geometry}
\usepackage{amsmath,amssymb,amsthm,amsfonts}
\usepackage[colorlinks,citecolor=blue, linkcolor=blue, urlcolor=magenta]{hyperref}
\usepackage{dsfont}
\usepackage{multicol}
\usepackage{paralist}
\usepackage{times}
\usepackage{enumitem}
\usepackage{epsfig}
\usepackage{graphicx}
\usepackage{pgf}
\usepackage{chemarr}
\usepackage{float}
\usepackage{wrapfig}
\usepackage{array, multirow}
\usepackage{tikz}
\usetikzlibrary{positioning, arrows.meta, shapes.geometric, calc}
\restylefloat{table}

\usepackage[linesnumbered,ruled,commentsnumbered,noline]{algorithm2e}

\usepackage{cases}
\usepackage{caption}
\usepackage{subcaption}

\usepackage{url}
\usepackage{bm}
\usepackage[square, numbers]{natbib}
\usepackage{tcolorbox}

\usepackage[mathscr]{eucal}
\usepackage{color}
\usepackage{macro}
\numberwithin{equation}{section} % Number equations by section

\title[Glycolytic Pathway]{Stochastic Averaging and Statistical Inference of Glycolytic Pathway}

\author{Arnab Ganguly and Hye-Won Kang}
\address{Department of Mathematics, Louisiana State University, USA.}
\address{Department of Mathematics, University of Maryland-Baltimore County, USA.}
\email{aganguly@lsu.edu}
\email{hwkang@umbc.edu}
\thanks{A. Ganguly is the corresponding author. Research of A. Ganguly is supported in part byNSF DMS - 2246815 and Simons Foundation (via Travel Support for Mathematicians).
The authors gratefully acknowledge the organizers of the \textit{Workshop on Chemical Reaction Network Theory} held at POSTECH (Pohang University of Science and Technology) in the Republic of Korea in July 2024, where this project was initiated.
}

\subjclass[2020]{60F17, 60H30, 	60J28, 	60J74, 	62F12, 	92-10}

\keywords{Stochastic Reaction Networks, Glycolytic Pathway, Multiscale systems, Stochastic Averaging, QSSA}

\begin{document}
	
	\begin{abstract}

% {\grn(Abstract needs to be one paragraph, within 250 words; current one is 227 words.)}\\
Many biological processes exhibit oscillatory behavior. Among these, glycolytic oscillations have been extensively studied due to their well-characterized biochemical reaction networks. However, the complexity of these networks necessitates low-dimensional ordinary differential equation (ODE) models to identify core mechanisms and perform stability analysis. While previous studies proposed reduced ODE models, these were typically introduced from deterministic descriptions rather than the underlying stochastic dynamics, which more accurately represent discrete reaction events occurring at random times.
In this paper, we develop a rigorous probabilistic framework for deriving a reduced Othmer-Aldridge model of the glycolytic pathway from its stochastic formulation. The full system is modeled as a multiscale continuous-time Markov chain  with different time and abundance scales. Under an appropriate scaling regime and specific structural conditions, we prove that the dynamics of the slow components are approximated by a two-dimensional ODE. The proof is technically involved due to the network’s complexity and strong coupling between its components.
We further consider the problem of parameter estimation when observations are limited to the slow species: fructose-6-phosphate and ADP. The reduced system yields a tractable loss function depending solely on these variables. We prove that the resulting estimators are statistically consistent when the data originate from the full stochastic reaction network. Together, these results provide a mathematically rigorous framework linking stochastic biochemical reaction networks, reduced deterministic dynamics, and statistically reliable parameter estimation.

% In this work, we introduce a stochastic formulation of the glycolytic pathway proposed by Othmer and Aldridge using a continuous-time Markov chain framework. We establish a law of large numbers result and rigorously derive a corresponding reduced ODE model through a probabilistic multiscale analysis. The proof relies on stochastic averaging under carefully identified scaling regimes and structural conditions, whose verification is nontrivial due to the network's complexity.
% In addition, we address parameter inference using the reduced ODE model under partial observability, assuming that only the slow variables appearing in the reduced system are measurable. We prove statistical consistency of parameter estimates obtained by minimizing a data-driven loss function based on observations of these slow variables. Together, these results provide a mathematically rigorous framework linking stochastic biochemical reaction networks, reduced deterministic dynamics, and statistically reliable parameter estimation.

	\end{abstract}
%\date{{\cb \today, \DTMcurrenttime}}
	
	\maketitle

	\setcounter{equation}{0}
	\renewcommand {\theequation}{\arabic{section}.\arabic{equation}}
	\section{Introduction.}\label{sec:intro}

Understanding oscillatory systems is fundamental in biology, as many biological processes exhibit rhythmic behaviors across multiple scales. At the molecular level, certain gene expression dynamics induce circadian oscillations in mRNA and protein concentrations. At the subcellular level, metabolites display oscillatory behavior through alternating activation and inhibition of enzymes. On a larger scale, some microbial populations, such as yeast, exhibit collective population rhythms arising from the synchronization of individual cellular oscillations. Previous studies have revealed that nonlinear interactions and feedback mechanisms are key components required to generate and sustain such oscillations~\cite{Novak:2008:DPB,Goldbeter:2018:DSB}. Nevertheless, further research is needed to elucidate these oscillatory behaviors across complex biological systems spanning genetics, physiology, and ecology.

Among these oscillatory systems, glycolytic oscillations have been extensively studied using dynamical models because the underlying biochemical reaction networks are relatively well characterized. The classical Higgins and Selkov models~\cite{Higgins:1967:TOR,Selkov:1968:SOG}, formulated as systems of ordinary differential equations (ODEs), describe glycolytic oscillations using two variables—substrate and product—interacting through nonlinear feedback. Building on Higgins’ general framework, Othmer and Aldridge proposed ODE-based models to study oscillations and synchronization at the population level~\cite{Othmer:1978:ECD}. However, the derivation of these two-variable models was largely heuristic and lacked rigorous mathematical justification.

Deriving low-dimensional ODE models remains a powerful approach for understanding oscillatory dynamics in biological systems. Such models allow researchers to identify essential mechanisms responsible for oscillatory behavior and to perform analytical investigations using tools such as stability and bifurcation analysis. These analyses help determine the existence of limit cycles and the conditions under which oscillations arise. Moreover, simple ODE models enable systematic exploration of oscillation properties such as period and amplitude. Together, these theoretical and numerical analyses contribute to a deeper understanding of biological oscillations. However, to ensure reliability, a rigorous derivation of such reduced models is essential.

% One common method for deriving reduced ODE models from complex systems is singular perturbation analysis. By identifying fast and slow variables, one can separate time scales within the system and derive an approximate ODE model defined on a lower-dimensional manifold. This approach is known as the quasi-steady-state approximation (QSSA), in which fast variables are assumed to rapidly reach quasi-equilibrium and become functions of slow variables~\cite{Segel:1989:QSS}. However, applying QSSA to oscillatory systems can be problematic since a full model with no oscillatory behavior can produce a reduced model with oscillatory behavior and the opposite cases are also possible~\cite{Flach:2006:UAQ,Kim:2020:MMM}. This discrepancy makes model reduction using QSSA particularly challenging in oscillatory biological systems.

One common method for deriving reduced ODE models from complex biochemical systems is singular perturbation analysis. By identifying fast and slow variables, one separates time scales and derives an approximate lower-dimensional system defined on a slow manifold. This approach, often referred to as the quasi-steady-state approximation (QSSA), assumes that fast variables rapidly converge to quasi-equilibrium and can therefore be expressed as functions of the slow variables~\cite{Segel:1989:QSS}. While this methodology has been widely successful, its application to oscillatory systems is delicate. In particular, it is known that a full deterministic model without oscillations may yield a reduced system exhibiting oscillations, and conversely, oscillatory behavior in the full model may disappear after reduction~\cite{Flach:2006:UAQ,Kim:2020:MMM}. Since oscillations depend sensitively on nonlinear feedback structure and global phase-space geometry, time-scale reduction at the deterministic level can alter essential dynamical features.

Moreover, deterministic ODE descriptions themselves arise as approximations of underlying stochastic reaction networks. Biochemical systems evolve through discrete reaction events occurring randomly in time, and their natural mathematical representation is a continuous-time Markov chain (CTMC) model ~\cite{Gillespie:1977:ESS, Anderson:2011:CTM}. Deterministic mass-action equations emerge from such models under suitable scaling limits via law-of-large-numbers (LLN) arguments \cite{Kurtz72}. This observation suggests that model reduction can be performed at a more fundamental level by starting from the stochastic formulation rather than from  a
deterministic system that is already an approximation.

% {\grn
% Moreover, discrepancies between reduced ODE models and the true dynamics of oscillatory biological systems may be exacerbated when stochastic effects are neglected. Reduced models are often derived from deterministic ODE descriptions of full systems under the assumption that biochemical reaction networks evolve continuously. In contrast, biochemical systems are inherently discrete, as molecular populations change in integer-valued increments, and stochastic, as reaction events occur randomly in time~\cite{Gillespie:1977:ESS}. These stochastic fluctuations can play a significant role in oscillatory dynamics, particularly in systems with nonlinear interactions, where noise may alter phase, amplitude, or even the existence of oscillations. This observation motivates the derivation of reduced models directly from stochastic formulations rather than from deterministic approximations.

% }

Motivated by this perspective, this paper takes a probabilistic approach. The primary contribution is twofold. First, we establish a LLN \Cref{thm:FLLN-Glyco} showing that, under an appropriate multiscale scaling regime, the CTMC formulation of the glycolytic reaction network converges to a two-dimensional ODE model describing the evolution of the slow species. In contrast to singular perturbation or QSSA-based reductions, where one begins with a deterministic ODE system and derives a lower-dimensional approximation through formal time-scale separation, our derivation proceeds at the level of the stochastic reaction network itself. The reduced ODE does not arise from a perturbative expansion of a pre-existing deterministic model; rather, it emerges as a scaling limit in probability of the underlying jump process, in the same spirit as other LLNs in which deterministic mean-field equations arise as limits of stochastic particle systems in statistical physics. This approach provides a rigorous probabilistic foundation for the reduced dynamics and clarifies precisely under what scaling assumptions the low-dimensional ODE accurately captures the macroscopic behavior of the biochemical system.
The rigorous analysis yields structural insight into the multiscale structure of the glycolytic network, and specifies which combinations of microscopic parameters govern the effective macroscopic dynamics. 

Second, we address statistical inference for the glycolytic pathway when observations are available only for the slow species fructose-6-phosphate (F6P) and ADP (denoted by $A_1$ and $A_2$). In the full stochastic reaction network, the high dimensionality, strong nonlinear coupling, and presence of unobserved fast species render direct likelihood-based or trajectory-based inference computationally prohibitive and statistically ill-posed. The reduced-order model, by contrast, yields a closed and low-dimensional dynamical system expressed solely in terms of the observed slow variables, thereby providing a tractable and well-defined loss function for parameter estimation.
Our second main result, \Cref{th:consistency}, establishes statistical consistency of the estimators obtained by minimizing this loss function. Importantly, the observational data are assumed to be generated by the original multiscale CTMC dynamics rather than by the limiting ODE, reflecting the fact that the latter is an approximation  rather than  the true data-generating mechanism. The LLN result, which rigorously connects the microscopic CTMC to the reduced ODE under a specified multiscale regime, plays a crucial role in the proof. While this scaling limit provides the essential link between the microscopic and reduced dynamics, establishing consistency of the estimators for the effective parameters of the reduced model requires a substantially deeper argument. This result provides a mathematically justified framework for data-driven inference using the reduced model, which would otherwise be infeasible.

% Second, we address the problem of statistical inference for the glycolytic pathway using data available only on the slow species fructose-6-phosphate (F6P) and ADP (denoted by $A_1$ and $A_2$ on the paper). Clearly, direct inference from the full stochastic model (or even the full ODE system) is computationally and statistically infeasible, whereas the reduced-order model naturally yields a tractable loss function based solely on trajectories of the slow variables. Our second main result, \Cref{th:consistency}, establishes that the resulting estimators are statistically consistent (i.e. asymptotically unbiased). This mathematically justifies the utility and accuracy of the reduced system for data-driven inference. 

Our LLN-result falls within the realm of stochastic averaging.
At a conceptual level, the overall strategy for proving stochastic averaging is well-known and can be summarized in a couple of steps. One first typically proves tightness of the slow component in a suitable function space (e.g., $C([0,T], \R^d)$ for continuous processes or $D([0,T],\R^d)$ for c\`adl\`ag processes) and tightness of the occupation measures associated with the fast component, viewed as measure-valued random variables. Tightness ensures the existence of limit points for the joint process. The final step is to identify these limit points and prove uniqueness for the limiting slow dynamics, thereby characterizing the reduced model.

This program has been implemented for generic continuous It\^o diffusion processes under ideal conditions, such as Lipschitz drift and diffusion coefficients and often weak coupling between slow and fast components \cite{Khas68, PaVe01, PaVe02, FrWe08, FrWe12}. It has been formalized in \cite{Kang:2013:STM} for equations arising from reaction systems (see also \cite{Crudu2012AAP} for simpler classes of reaction systems). These works primarily adopt a generator-based approach building on \cite{Kurtz:1992:AMP}, which provides a general framework for stochastic averaging of certain martingale problems. However, such results serve as a structural blueprint, rather than a turnkey solution: the principal mathematical challenge of course lies in executing the averaging strategy  for specific models. 
% In some settings, convergence of generators and semigroups can be handled via functional analytic techniques~\cite{Ball:2006:AAM, Crudu2012AAP}, but such approaches are often difficult to implement for complex reaction networks.

In the present case, the CTMC model of the glycolytic pathway is highly intricate (see \Cref{fig:full_gly}), involving ten species and sixteen reactions that form a strongly coupled network of jump processes. The reaction rates span four distinct scales, $O(n^{-1})$, $O(1)$, $O(n^{1/2})$, and $O(n)$, while species abundances occur on three scales, $O(1)$, $O(n)$, and $O(n^2)$. Here $n$ denotes a scaling parameter that captures differences in species abundances and reaction speeds. Moreover, both slow and fast reactions influence the dynamics of the slow and fast components (see \eqref{eq:scaling-regime}, \eqref{eq:full_gly_scaled_eq}). These features places our system outside the scope of standard stochastic averaging frameworks, and even if one were content with an abstract characterization of the limit --- without seeking an explicit closed-form expression --- the hypotheses of generic stochastic averaging theorems are not satisfied, and hence such results cannot be invoked to justify convergence.

Thus, implementing the broad steps outlined above for this system is a delicate task and requires a careful examination of the specific network structure underlying the glycolytic pathway. In particular, establishing tightness of the occupation measures for the fast subsystem, together with controlling certain integrals involving these measures, is challenging. Both are required at various points in the proofs and are complicated by the rapid fluctuations of AMP (denoted by $A_3$). Addressing these obstacles requires non-standard techniques, including delicate martingale estimates that exploit the specific structure of the network (see \Cref{prop:rel_compactness}).
 In contrast to functional analytic techniques based on convergence of relevant generators and semigroups \cite{Kurtz:1992:AMP, Ball:2006:AAM, Crudu2012AAP}, which are often difficult to employ for complex models like ours, our approach is probabilistic in nature relying on representation of the species-processes as solutions of stochastic differential equations (SDEs) driven by Poisson random measures (PRMs). In our opinion, this in fact provides more transparent insights into species dynamics and the interplay between fast and slow reactions. Such SDE-based representations were previously used to mathematically justify popular total Quasi-Steady State Approximations (tQSSA) for Michaelis-Menten kinetics in a recent work of the first author, \cite{GaKh26}.

A further difficulty arises at the identification stage. In generic stochastic averaging theorems, the limiting slow equation is characterized in abstract form: the drift of the slow variable is expressed as an average of certain coefficients with respect to an invariant distribution of the fast process, which is assumed to be unique. Such an abstract characterization of the reduced model is clearly inadequate for practical purposes, as it does not provide  a tractable model for numerical simulation and parameter estimation. To be usable, the limiting dynamics must be available in explicit closed form.

But, to this end, we note that the abstract averaging framework is not even  applicable in our setting. The reason is that, in our case, the equilibrium distribution of the fast subsystem, which satisfies a complex PDE,  is neither available in closed form nor guaranteed to be unique. As a result, one cannot simply apply a general stochastic averaging theorem to justify the reduced equation. Instead, a more detailed analysis of the network structure is required to establish the relevant ergodic properties and to demonstrate that, despite these obstacles, the limiting equation for the slow species is both unique and explicitly computable, thereby  yielding a concrete and computationally tractable reduced-order model suitable for both simulation and parameter estimation.
Unlike earlier heuristic reductions of the complex glycolytic pathway relying on quasi-steady-state or partial equilibrium assumptions, our derivation provides a mathematically rigorous foundation of the Othmer–Aldridge model that not only retains the key oscillatory kinetics but also mathematically justifies its efficacy in statistical inference from data on the slow species.

The rest of the paper is organized as follows. In \Cref{sec:math-frame}, we introduce the stochastic model of the glycolytic pathway and formulate its pathwise representation as a system of SDEs driven by Poisson random measures. The main stochastic averaging result is established in \Cref{sec:stochavg}. We also discuss the resulting reduced-order system and present numerical simulations comparing it with the full CTMC model. Section~\Cref{sec:para} is devoted to parameter estimation based on the reduced model. Our primary result in this section is the consistency of the proposed estimators. Numerical experiments validating accuracy of the estimators are also presented.

% Beyond capturing oscillatory behavior, we show that our reduced ODE model facilitates efficient parameter estimation and helps address identifiability issues. When time-series data for some or all species are available, ODE models can support parameter inference, as their reduced dimensionality typically involves fewer unknown parameters~\cite{Pantea:2013:QCK,Snowden:2017:MMR}. Assuming that only two slowly varying species are observable in the glycolytic pathway, we generate synthetic time-series data and perform parameter estimation. Our model provides accurate estimates for all reduced ODE model parameters, demonstrating its practical utility in data-driven inference. Since experimental observations often capture only slow species, a reduced ODE model with fewer parameters is particularly advantageous. Therefore, our approach not only clarifies the mathematical foundation of the Othmer–Aldridge model but also offers a general framework for analyzing and parameterizing other biochemical systems exhibiting oscillatory dynamics.

	   \subsection{Notational conventions}

    \textbullet\ The space of continuous functions from $E$ to $F$ will be denoted by $C(E, F)$ with the subset $C_b(E, F), C_c(E, F)$ denoting  the bounded, continuous and compactly supported continuous functions, respectively. These spaces are equipped with their usual topologies. For $T>0$, the space $C([0,T], F)$ is a subset of $D([0,T], F)$, the space of  c\`adl\`ag functions from $[0,T]$ to $F$. $D([0,T], F)$ will be equipped with the Skorokhod topology. \textbullet\ The $\sigma$-field generated by the Borel subsets of $E$ will be denoted by $\cal{B}(E)$. \textbullet\  $\cal{M}(E)$ will denote the space of finite (non-negative) measures on $E$ equipped with the topology of weak convergence. For $r>0$, $\cal{M}_r(E) \subset \cal{M}(E)$ will denote the space of (non-negative) measures $\nu$ such that $\nu(E) =r$. %\textbullet\ The set of non-negative integers, real numbers, non-negative real numbers will be denoted by $\setOfNaturals, \setOfNonnegativeIntegers, \setOfReals$, and $\setOfPositiveReals$ respectively. 
    \textbullet\ The indicator function of a set $A$ will be denoted by $\indic_{A}(\cdot)$, i.e., $\indic_{A}(x) = 1$ if $x\in A$, and zero otherwise. \textbullet\ We use $\leb$ to denote the Lebesgue measure on $\R$. \textbullet\ For a c\`adl\`ag function $f$, we denote the left-hand limit of the function $f$ at $t$ by $f(t-)$.   %For a differentiable function $f:\R^d \rt \R$, the function $\partial_i f$ will denote the first-order derivative of $f$ with respect to the $i$-th coordinate. If the coordinate is clear from the context, we will simply use $\partial f$. The $k$-th order derivative with respect to the $i$-th coordinate (when it exists) will be denoted by $\partial^k_i f$. \textbullet\ The notation $a \wedge b$ denotes the minimum of $a$ and $b$. Similarly, $a \vee  b$ denotes the maximum of $a$ and $b$. 
   \textbullet\ For $u \in \R^d$ or $\Z^d$, $\|u\|_1 = \sum_{i=1}^d |u_i|$.    
    \textbullet\ Other notations will be introduced when needed.
	
	\section{Mathematical Framework}\label{sec:math-frame}
A wide variety of simplified glycolytic pathways have been studied to understand the oscillatory behavior observed in metabolic intermediates of glycolysis~\cite{Higgins:1967:TOR,Selkov:1968:SOG,Othmer:1978:ECD}. Among these, we focus on the enzyme-catalyzed reaction mechanism introduced by Othmer and Aldridge~\cite{Othmer:1978:ECD}. 

This mechanism consists of ten chemical species and sixteen reactions. We denote by $A_1$ and $A_2$ fructose-6-phosphate (F6P) and ADP, respectively; $A_3$ and $A_4$ represent AMP and ATP. The variables $E_1$ and $E_1^*$ correspond to the low-activity and activated forms of phosphofructokinase-1 (PFK). The complexes formed between PFK abd F6P are denoted by $E_1A_1$ and $E_1^*A_1$. In addition, $E_2$ denotes an enzyme responsible for ADP degradation, and $E_2A_2$ denotes its complex with ADP. 

The full chemical reaction network is is presented in \Cref{fig:full_gly}.

    % Reaction system {\crd AG: We should introduce this in the introduction}

	% \begin{equation}
	% 	\begin{split}
	% 		& \emptyset \xrightarrow{\nkappa_0} A_1, \\
	% 		& E_1+A_1 \xrightleftharpoons[\nkappa_{-1}]{\nkappa_1} E_1A_1  \xrightarrow{\nkappa_2}  E_1+A_2, \\ 
	% 		& E_1^*+A_1 \xrightleftharpoons[\nkappa_{-3}]{\nkappa_3} E_1^*A_1  \xrightarrow{\nkappa_4}  E_1^*+A_2, \\ 
	% 		& E_2+A_2 \xrightleftharpoons[\nkappa_{-5}]{\nkappa_5} E_2A_2  \xrightarrow{\nkappa_6}  E_2+\mbox{Product}, \\ 
	% 		& E_1 +A_3 \xrightleftharpoons[\nkappa_{-7}]{\nkappa_7} E_1^*, \qquad\quad
	% 		E_1A_1 +A_3 \xrightleftharpoons[\nkappa_{-7}]{\nkappa_7} E_1^*A_1, \\
	% 		& 2A_2 \xrightleftharpoons[\nkappa_{-8}]{\nkappa_8} A_3+A_4.
	% 	\end{split}
	% 	\label{eq:full_gly}
	% \end{equation}
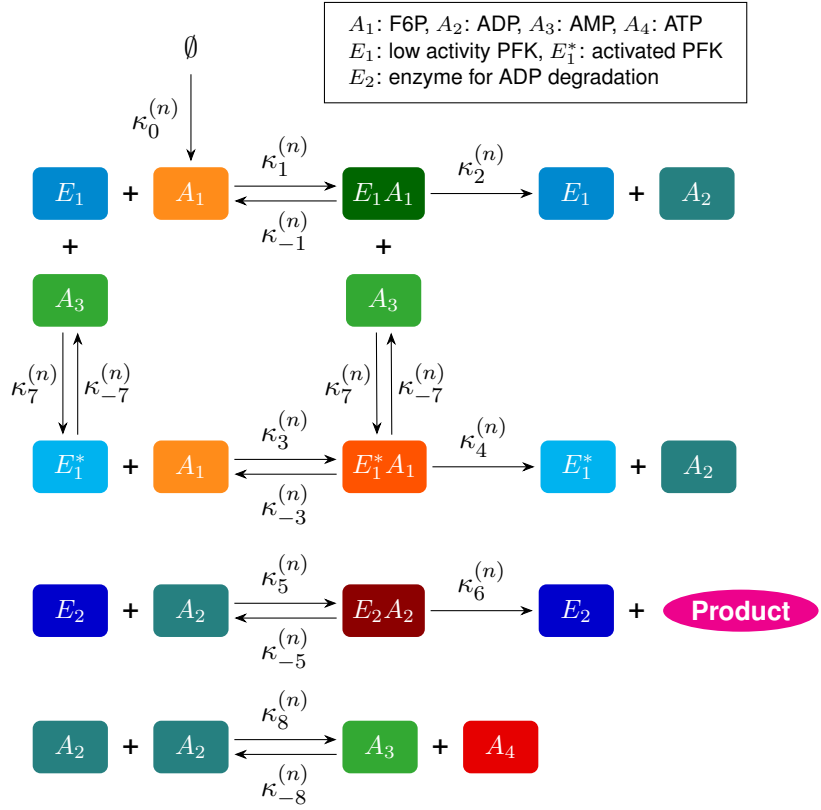
\begin{figure}

\begin{tikzpicture}[scale=1.0, transform shape,
    % Define the styles for the species boxes
    species/.style={
        draw=none, 
        rectangle, 
        rounded corners=3pt, 
        minimum width=1.0cm, 
        minimum height=0.7cm, 
        font=\sffamily\small\bfseries, 
        text=white
    },
    % Color definitions
    E1_style/.style={species, fill=cyan!70!blue},
    E1star_style/.style={species, fill=cyan!90},
    E2_style/.style={species, fill=blue!80!black}, 
    A1_style/.style={species, fill=orange!90},
    A2_style/.style={species, fill=teal!70!gray},
    % A2_style/.style={species, fill=green!60!gray!80},
    A3_style/.style={species, fill=green!58!black!80},
    A4_style/.style={species, fill=red!90!black},
    E1A_style/.style={species, fill=green!40!black},
    E1starA_style/.style={species, fill=orange!65!red},
    E2A_style/.style={species, fill=red!55!black},
    product_style/.style={
    ellipse, 
    fill=magenta, 
    font=\sffamily\small\bfseries, 
    text=white, 
    inner sep=2pt, 
    minimum width=1.2cm
    },
    % Arrow styles with gaps
    rev_fwd/.style={transform canvas={yshift=2.8pt}, shorten >=2pt, shorten <=2pt, -{Stealth[scale=1.0]}},
    rev_back/.style={transform canvas={yshift=-2.8pt}, shorten >=2pt, shorten <=2pt, {Stealth[scale=1.0]}-},
    forward/.style={shorten >=2pt, shorten <=2pt, -{Stealth[scale=1.0]}},
    up_down_left/.style={transform canvas={xshift=-2.8pt}, shorten >=2pt, shorten <=2pt, {Stealth[scale=1.0]}-},
    up_down_right/.style={transform canvas={xshift=2.8pt}, shorten >=2pt, shorten <=2pt, -{Stealth[scale=1.0]}}
]

% --- Legend Box ---
\node[draw, rectangle, thin, inner sep=4pt, font=\sffamily\scriptsize, anchor=south east] at (9, 3.2) {
    \begin{tabular}{l}
        $A_1$: F6P, $A_2$: ADP, $A_3$: AMP, $A_4$: ATP \\
        $E_1$: low activity PFK, $E_1^*$: activated PFK \\
        $E_2$: enzyme for ADP degradation
    \end{tabular}
};

% --- ROW 1 ---
\node[E1_style] (E1_r1) at (0, 2) {$E_1$};
\node[right=0.05cm of E1_r1, font=\bfseries] (p1) {+};
\node[A1_style, right=0.05cm of p1] (A1_r1) {$A_1$};

% Move empty set slightly higher to make the arrow longer
\node[above=1.3cm of A1_r1, font=\normalsize] (empty) {$\emptyset$};

% Draw the arrow with label on the left
\draw[forward] 
    (empty) -- (A1_r1) node[midway, left, xshift=-0.1pt] {$\nkappa_{0}$};
% \draw[forward] (empty) -- (A1_r1);
% \node[left=0.3cm] at ($(empty)!0.5!(A1_r1)$) {$\nkappa_0$}; % <-- left of arrow

\node[E1A_style, right=1.5cm of A1_r1] (E1A) {$E_1A_1$};
\node[E1_style, right=1.5cm of E1A] (E1_r1_end) {$E_1$};
\node[right=0.05cm of E1_r1_end, font=\bfseries] (p2) {+};
\node[A2_style, right=0.05cm of p2] (A2_r1) {$A_2$};

\draw[rev_fwd] (A1_r1) -- (E1A) node[midway, above] {$\nkappa_1$};
\draw[rev_back] (A1_r1) -- (E1A) node[midway, below] {$\nkappa_{-1}$};
\draw[forward] (E1A) -- (E1_r1_end) node[midway, above] {$\nkappa_2$};

% --- ROW 2 ---
\node[below=0.25cm of E1_r1, font=\bfseries, inner sep=0] (p3) {+};
\node[A3_style, below=0.25cm of p3] (A3_left) {$A_3$};

\node[below=0.25cm of E1A, font=\bfseries, inner sep=0] (p4) {+};
\node[A3_style, below=0.25cm of p4] (A3_mid) {$A_3$};

% --- ROW 3 ---
\node[E1star_style, below=1.5cm of A3_left] (E1s_r3) {$E_1^*$};
\node[right=0.05cm of E1s_r3, font=\bfseries] (p5) {+};
\node[A1_style, right=0.05cm of p5] (A1_r3) {$A_1$};

\node[E1starA_style, right=1.5cm of A1_r3] (E1sA) {$E_1^*A_1$};
\node[E1star_style, right=1.5cm of E1sA] (E1s_r3_end) {$E_1^*$};
\node[right=0.05cm of E1s_r3_end, font=\bfseries] (p6) {+};
\node[A2_style, right=0.05cm of p6] (A2_r3) {$A_2$};

\draw[transform canvas={xshift=-2.8pt}, shorten >=2pt, shorten <=2pt, -{Stealth[scale=1.0]}] 
   (A3_left) -- (E1s_r3) node[midway, right, xshift=4pt] {$\nkappa_{-7}$};

\draw[transform canvas={xshift=2.8pt}, shorten >=2pt, shorten <=2pt, -{Stealth[scale=1.0]}] 
  (E1s_r3) -- (A3_left) node[midway, left, xshift=-3pt] {$\nkappa_{7}$};

\draw[transform canvas={xshift=2.8pt}, shorten >=2pt, shorten <=2pt, -{Stealth[scale=1.0]}] 
  (E1sA) -- (A3_mid) node[midway, left, xshift=-3pt] {$\nkappa_{7}$}; 

\draw[transform canvas={xshift=-2.8pt}, shorten >=2pt, shorten <=2pt, -{Stealth[scale=1.0]}] 
   (A3_mid) -- (E1sA) node[midway, right, xshift=4pt] {$\nkappa_{-7}$};
% \draw[up_down_right] (A3_mid) -- (E1sA) node[midway, right] {$\nkappa_7$};

\draw[rev_fwd] (A1_r3) -- (E1sA) node[midway, above] {$\nkappa_3$};
\draw[rev_back] (A1_r3) -- (E1sA) node[midway, below] {$\nkappa_{-3}$};
\draw[forward] (E1sA) -- (E1s_r3_end) node[midway, above] {$\nkappa_4$};

% --- ROW 4 ---
\node[E2_style, below=1.2cm of E1s_r3] (E2_r4) {$E_2$};
\node[right=0.05cm of E2_r4, font=\bfseries] (p7) {+};
\node[A2_style, right=0.05cm of p7] (A2_r4) {$A_2$};

\node[E2A_style, right=1.5cm of A2_r4] (E2A) {$E_2A_2$};
\node[E2_style, right=1.5cm of E2A] (E2_r4_end) {$E_2$};
\node[right=0.05cm of E2_r4_end, font=\bfseries] (p8) {+};
\node[product_style, right=0.1cm of p8] (prod) {Product};
% \node[right=0.05cm of p8, font=\sffamily\small, text=black] (prod) {Product};

\draw[rev_fwd] (A2_r4) -- (E2A) node[midway, above] {$\nkappa_5$};
\draw[rev_back] (A2_r4) -- (E2A) node[midway, below] {$\nkappa_{-5}$};
\draw[forward] (E2A) -- (E2_r4_end) node[midway, above] {$\nkappa_6$};

% --- ROW 5 ---
\node[A2_style, below=1.1cm of E2_r4] (A2_r5_1) {$A_2$};
\node[right=0.05cm of A2_r5_1, font=\bfseries] (p9) {+};
\node[A2_style, right=0.05cm of p9] (A2_r5_2) {$A_2$};

\node[A3_style, right=1.5cm of A2_r5_2] (A3_r5) {$A_3$};
\node[right=0.05cm of A3_r5, font=\bfseries] (p10) {+};
\node[A4_style, right=0.05cm of p10] (A4_r5) {$A_4$};

\draw[rev_fwd] (A2_r5_2) -- (A3_r5) node[midway, above] {$\nkappa_8$};
\draw[rev_back] (A2_r5_2) -- (A3_r5) node[midway, below] {$\nkappa_{-8}$};

\end{tikzpicture}
\caption{Glycolytic Pathway}
\label{fig:full_gly}
\end{figure}

This mechanism provides a simplified description of the glycolytic pathway, focusing on the rate-limiting phosphorylation of F6P ($A_1$) by ATP, which produces fructose-1,6-bisphosphate and ADP ($A_2$). PFK catalyzes this step and exhibits either low ($E_1$) or high ($E_1^*$) activity depending on the cellular AMP concentration. ADP ($A_2$) participates in downstream reactions of glycolysis, modeled here as an ADP-degradation process catalyzed by $E_2$. AMP ($A_3$) binds to both free and substrate-bound low-activity PFKs ($E_1$ and $E_1A_1$), converting them into their activated forms ($E_1^*$ and $E_1^*A_1$). Moreover, AMP and ATP interconvert with ADP through a reversible reaction.

In their study, Othmer and Aldridge~\cite{Othmer:1978:ECD} derived a two-variable reduction of the full mechanism under the following assumptions: (i) the enzyme-substrate complexes $E_1A_1$, $E_1^*A_1$, and $E_2A_2$ satisfy a quasi-steady-state approximations; (ii) the activation reactions involving $E_1$ and $E_1A_1$ are in partial equilibrium; and (iii) the interconversion of $A_4$ is also in partial equilibrium. The resulting reduced dynamics for the concentrations of $A_1$ and $A_2$ take the form
\begin{equation}
\begin{split}
\frac{dx}{dt} &= k - \bar{f}(x,y), \quad
\frac{dy}{dt} = \bar{f}(x,y) - \bar{g}(y)
\end{split}
\label{eq:det_odes}
\end{equation} 
where $x$ and $y$ denote the concentrations of F6P and ADP, respectively. 
However, the derivation of this reduced model was presented only heuristically, and conditions under which the approximation captures the oscillatory behavior of the full reaction network have not been rigorouly analyzed.

The first part of the paper will focus on how to rigorously derive a reduced-order ODE model of the form \eqref{eq:det_odes} starting from a Markov chain formulation of the reaction system in \Cref{fig:full_gly} in a suitable scaling regime.

% In the remaining of this paper, we introduce a stochastic description of the glycolytic mechanism based on a continuous-time Markov chain and provide a rigorous derivation of the reduced ODE system analogous to~\eqref{eq:det_odes}. 

\subsection{Stochastic Description}
For each $n \geq 1$, viewed as a scaling parameter that encodes differences in species abundance and reaction speeds, we denote by $\nX$ the stochastic process representing the species vector,
$$\nX = \left(\nX_{A_1}, \nX_{A_1}, \nX_{A_3}, \nX_{A_4}, \nX_{E_1}, \nX_{E^*_1}, \nX_{E_1A_1}, \nX_{E^*_1A_1}, \nX_{E_2}, \nX_{E_2A_2}\right).$$
Denote the reaction index set as
\begin{eqnarray*}
%\mathcal{S}_I &=& \{A_1,A_2,A_3,A_4,E_1,E_1^*,E_1A_1,E_1^*A_1,E_2,E_2A_2\},\\
\mathcal{R} &=& \{0,\pm1,2,\pm3,4,\pm5,6,\pm7,\pm7',\pm8\}.
\end{eqnarray*}
For each $k \in \mathcal{R}$, define an increasing pure jump process $\nR_k$ by
\begin{align} \label{eq:react-num}
\dst \nR_{k}(t) = \int_{[0,\infty)\times[0,t]} \indic_{[0, \nlambda_k(\nX(s-))]}(u)\xi_k(du\times ds),
\end{align}
where the $\xi_k$ are Poisson random measures (PRMs) on $[0,\infty)\times[0,\infty)$ with mean measure as product of Lebesgue measures, $\leb\ot\leb$.
$\nR_k(t)$ is the number of occurrences of the $k$-th reaction with propensity functions $\nlambda_k$ in $[0,t]$. The propensity functions $\nlambda_k$ are defined as follows: $\nlambda_0(x) \equiv \nkappa_0$, 
and 
\begin{align}\label{eq:prop-X}
\begin{alignedat}{4}
\nlambda_1(x)   &= \nkappa_1 x_{E_1}x_{A_1}, &\quad 
\nlambda_{-1}(x) &= \nkappa_{-1} x_{E_1A_1}, &\quad 
\nlambda_{2}(x)  &= \nkappa_2 x_{E_1A_1}, \\
\nlambda_{3}(x)  &= \nkappa_3 x_{E_1^*}x_{A_1}, &\quad 
\nlambda_{-3}(x) &= \nkappa_{-3} x_{E_1^*A_1}, &\quad 
\nlambda_{4}(x)  &= \nkappa_4 x_{E_1^*A_1}, \\ 
\nlambda_{5}(x)  &= \nkappa_5 x_{E_2}x_{A_2}, &\quad 
\nlambda_{-5}(x) &= \nkappa_{-5} x_{E_2A_2}, &\quad 
\nlambda_{6}(x) &= \nkappa_{6} x_{E_2A_2}, \\
\nlambda_{7}(x)  &= \nkappa_7 x_{E_1}x_{A_3}, &\quad 
\nlambda_{-7}(x) &= \nkappa_{-7} x_{E_1^*}, &\quad 
\nlambda_{7'}(x) &= \nkappa_7 x_{E_1A_1}x_{A_3}, \\ 
\nlambda_{-7'}(x) &= \nkappa_{-7} x_{E_1^*A_1}, &\quad 
\nlambda_{8}(x)  &= \nkappa_8 x_{A_2}\!\left(x_{A_2}-1\right), &\quad 
\nlambda_{-8}(x) &= \nkappa_{-8} x_{A_3}x_{A_4}. & & 
\end{alignedat}
\end{align}
Here, for convenience of tracking, a typical state $x \in \Z_{\geq 0}^{10}$ will be denoted as
$$x=(x_{A_1}, x_{A_2}, x_{A_3}, x_{A_4},x_{E_1}, x_{E_1^*}, x_{E_1A_1}, x_{E_1^*A_1}, x_{E_2}, x_{E_2A_2})$$
instead of the more typical $x=(x_1,x_2,\hdots, x_{10}).$

It is now clear from the reaction system in \Cref{fig:full_gly} that the trajectories of $\nX$ are given by 
\begin{align}
	\begin{aligned}
		\nX_{A_1}(t) &= \nX_{A_1}(0) +  \nR_0(t) - \nR_1(t) + \nR_{-1}(t) - \nR_3(t) + \nR_{-3}(t), \\
		\nX_{A_2}(t) &= \nX_{A_2}(0) + \nR_2(t) + \nR_4(t) - \nR_5(t) + \nR_{-5}(t) - 2\nR_8(t) + 2\nR_{-8}(t), \\
		\nX_{A_3}(t) &= \nX_{A_3}(0) - \nR_7(t) + \nR_{-7}(t) - \nR_{7'}(t) +\nR_{-7'}(t) + \nR_8(t) - \nR_{-8}(t), \\
		\nX_{A_4}(t) &= \nX_{A_4}(0) + \nR_8(t) - \nR_{-8}(t) , \\
		\nX_{E_1}(t) &= \nX_{E_1}(0) - \nR_1(t) + \nR_{-1}(t) + \nR_2(t) - \nR_7(t) +\nR_{-7}(t), \\ 
		\nX_{E_1^*}(t) &= \nX_{E_1^*}(0) - \nR_3(t) + \nR_{-3}(t) + \nR_4(t) + \nR_7(t) - \nR_{-7}(t), \\ 
		\nX_{E_1A_1}(t) &= \nX_{E_1A_1}(0) + \nR_1(t) - \nR_{-1}(t) - \nR_2(t) - \nR_{7'}(t) + \nR_{-7'}(t), \\ 
		\nX_{E_1^*A_1}(t) &=\nX_{E_1^*A_1}(0) +\nR_3(t) - \nR_{-3}(t) - \nR_4(t) + \nR_{7'}(t) -\nR_{-7'}(t), \\ 
        \nX_{E_2}(t) &= \nX_{E_2}(0) - \nR_5(t) + \nR_{-5}(t) + \nR_6(t), \\
		\nX_{E_2A_2}(t)&= \nX_{E_2A_2}(0) + \nR_5(t) - \nR_{-5}(t) -\nR_6(t),
	\end{aligned}
	\label{eq:full_gly_unscaled_eq}
\end{align}
which constitutes a system of SDEs driven by Poisson random measures (PRM) written in integral form.

In order to study the averaging phenomena, we consider the $[0,\infty)^{10}$-valued scaled process vector $\nZ$  defined by 
\begin{align} \label{eq:species-scaling}
\nZ_{i}(t) =& n^{-\alpha_{i}} \nX_{i}(t), \quad i \in \{A_1, A_2, A_3, A_4, E_1, E_1^*, E_1A_1, E_1^*A_1, E_2, E_2A_2\}.
\end{align}
The $\alpha_i \in \R$ are scaling exponents that describe variation in species abundance. We also introduce the scaling exponents $\{\beta_k\}$ for reactions rates capturing  variation in reaction speeds:
\begin{align}\label{eq:reaction-rate-scaling}
\kappa_k = n^{-\beta_k} \kappa^{(n)}_k, \quad k =0, \pm 1, 2, \pm 3, 4, \pm 5, 6, \pm 7, \pm 8.
\end{align}
As before, for convenience of tracking, a typical state $z \in [0,\infty)^{10}$ of the process $\nZ$ will be denoted as
$$z=(z_{A_1}, z_{A_2}, z_{A_3}, z_{A_4},z_{E_1}, z_{E_1^*}, z_{E_1A_1}, z_{E_1^*A_1}, z_{E_2}, z_{E_2A_2})$$
instead of the more typical $z=(z_1,z_2,\hdots, z_{10}).$

In this paper we operate in the following scaling regime:
\begin{align} \label{eq:scaling-regime}
\begin{aligned}
\alpha_{A_4}&=2, \quad \alpha_{A_1} = \alpha_{A_2} = 1, \quad \alpha_{A_3} = \alpha_{E_1} = \alpha_{E^*_1} = \alpha_{E_1A_1}= \alpha_{E_1^*A_1}= \alpha_{E_2}= \alpha_{E_2A_2} = 0\\
\beta_0 &= \beta_{-1}= \beta_{2}= \beta_{-3}= \beta_{4}= \beta_{-5}= \beta_{6}=1, \quad  \beta_{7}= \beta_{-7} = \frac{1}{2}, \quad \beta_{1}= \beta_{3}= \beta_{5}=0, \\
 \beta_{8}&= \beta_{-8} = -1.
\end{aligned}
\end{align}
With the above choice of the scaling parameters, the scaled process $\nZ$ satisfies the following system of equations: 
\begin{equation}
	\begin{split}
		\nZ_{A_1}(t) &= \nZ_{A_1}(0) + n^{-1}\left[ \nR_0(t) - \nR_1(t) + \nR_{-1}(t) - \nR_3(t) + \nR_{-3}(t)\right], \\
		\nZ_{A_2}(t) &= \nZ_{A_2}(0) + n^{-1}\left[ \nR_2(t) + \nR_4(t) - \nR_5(t) + \nR_{-5}(t) - 2\nR_8(t) + 2\nR_{-8}(t)\right], \\
		\nZ_{A_3}(t) &= \nZ_{A_3}(0) - \nR_7(t) + \nR_{-7}(t) - \nR_{7'}(t) +\nR_{-7'}(t) + \nR_8(t) - \nR_{-8}(t), \\
		\nZ_{A_4}(t) &= \nZ_{A_4}(0) + n^{-2}\left[ \nR_8(t) - \nR_{-8}(t) \right], \\
		\nZ_{E_1}(t) &= \nZ_{E_1}(0) - \nR_1(t) + \nR_{-1}(t) + \nR_2(t) - \nR_7(t) +\nR_{-7}(t), \\ 
		\nZ_{E_1^*}(t) &= \nZ_{E_1^*}(0) - \nR_3(t) + \nR_{-3}(t) + \nR_4(t) + \nR_7(t) - \nR_{-7}(t), \\ 
		\nZ_{E_1A_1}(t) &= \nZ_{E_1A_1}(0) + \nR_1(t) - \nR_{-1}(t) - \nR_2(t) - \nR_{7'}(t) + \nR_{-7'}(t), \\ 
		\nZ_{E_1^*A_1}(t) &=\nZ_{E_1^*A_1}(0) +\nR_3(t) - \nR_{-3}(t) - \nR_4(t) + \nR_{7'}(t) -\nR_{-7'}(t), \\ 
        \nZ_{E_2}(t) &= \nZ_{E_2}(0) - \nR_5(t) + \nR_{-5}(t) + \nR_6(t), \\
		\nZ_{E_2A_2}(t)&= \nZ_{E_2A_2}(0) + \nR_5(t) - \nR_{-5}(t) -\nR_6(t).
	\end{split}
	\label{eq:full_gly_scaled_eq}
\end{equation}
{\em Conservation Laws:} Notice that the following conservation laws hold: for all $t\geq 0$,
\begin{align}
        %\begin{aligned}
          &\nZ_{E_1}(t)+\nZ_{E_1A_1}(t)+\nZ_{E_1^*}(t)+\nZ_{E_1^*A_1}(t) \equiv\ \nconsconst_1, \qquad \nZ_{E_2}(t) + \nZ_{E_2A_2}(t)  \equiv\ \nconsconst_2. 
        %\end{aligned}
        \label{eq:cons_laws}
    \end{align}
Denote the species index-sets $S$ and $F$ by 
\begin{align}
    \label{eq:species-set}
  S=\{A_1,A_2,A_4\}, \qquad   F=\{A_3, E_1, E_1^*, E_1A_1, E_1^*A_1, E_2, E_2A_2\},
\end{align}
and write $\nZ = (\nZ_S, \nZ_F)$, where
\begin{align}\label{eq:slow-fast}
\begin{aligned}
\nZ_S \dfeq&\ \left(\nZ_{A_1},\nZ_{A_2}, \nZ_{A_4}\right), \qquad
\nZ_F \dfeq\ \left(\nZ_{A_3}, \nZ_{E_1}, \nZ_{E_1^*}, \nZ_{E_1A_1}, \nZ_{E_1^*A_1}, \nZ_{E_2}, \nZ_{E_2A_2}\right).
\end{aligned}
\end{align}
When necessary, a typical state $z \in [0,\infty)^{10}$ of $\nZ = (\nZ_S, \nZ_F)$ will be split as $z=(z_S,z_F)$ with $z_S=(z_{A_1}, z_{A_2}, z_{A_4}) \in [0,\infty)^3$ and $z_F=(z_{A_3},z_{E_1}, z_{E_1^*}, z_{E_1A_1}, z_{E_1^*A_1}, z_{E_2}, z_{E_2A_2}) \in [0,\infty)^7.$

In the scaling regime defined by \eqref{eq:scaling-regime}, $\nZ_F$ will be the fast process and $\nZ_S$ will be the slow process. Our primary goal is to show that as $\nZ_S \nrt Z_S$ in $D([0,T], [0,\infty)^3)$ where $Z_S$ is the solution of a possibly random ODE. We present this stochastic averaging result in the next section.

\section{Stochastic Averaging}\label{sec:stochavg}
We start with the necessary assumptions for the main convergence result to hold.

\begin{assumption} \label{assum:main} 
The following conditions hold.
\begin{enumerate}[label=(\alph*), ref=(\alph*)]
\item \label{item:mmt-bd-initial} $ \sup_n\EE\|\nZ(0)\|_1 < \infty$;
\item \label{item:cons-const} $\sup_n \EE\lf[(\nconsconst_1)^2\ri] \vee  \sup_n \EE\lf[(\nconsconst_2)^2\ri]  < \infty.$
\item \label{item:initial-other} 
\begin{itemize}
	\item for some  random variables $\consconst_1, \consconst_2, Z_{A_1}(0), Z_{A_2}(0)$ and $Z_{A_4}(0),$ as $n\rt \infty,$
	$$ \big(\nconsconst_1,\nconsconst_2, \nZ_{A_1}(0), \nZ_{A_2}(0), \nZ_{A_4}(0)\big) \prt  \big(\consconst_1, \consconst_2, Z_{A_1}(0), Z_{A_2}(0), Z_{A_4}(0)\big);$$ 
	\item $Z_{A_4}(0) > \delta_0$ a.s. for some $\delta_0>0$.
	\item $\lf\{n^{-1}\big(\nZ_{A_3}(0)\big)^{\aexp}\ri\}$ is tight for some $\aexp > 2$.
\end{itemize}

\end{enumerate}
\end{assumption}
% {\crd AG: The last assumption in \ref{item:initial-other} is a bit interesting. If we assume \ref{item:mmt-bd-initial}, which implies $\{\nZ_{A_3}(0)\}$ is tight, then that obviously implies $n^{-1}\big(\nZ_{A_3}(0)\big)^{\aexp} \prt 0$ (and hence it is tight). But I am thinking whether to preclude $\nZ_{A_3}(0)$ from \ref{item:mmt-bd-initial} and only  assume $\lf\{n^{-1}\big(\nZ_{A_3}(0)\big)^{\aexp}\ri\}$ is tight.}

The first step to prove the convergence of the slow process $\nZ_S$ is to establish its tightness in $D([0,T],  [0,\infty)^3)$. To analyze  the rapid movement of the fast process $\nZ_F$, we introduce its measure  $\nocc_F$ as
\begin{align*}
	\nocc_{F}(\SC{A}\times[0,t]) = \int_0^t \indic_{\SC{A}}(\nZ_F(s))\ \diff{s}, \quad \SC{A} \subset [0,\infty)^{7}.
\end{align*}
Notice that $\nocc_F$ is a random measure taking values in $\spmeas_T([0,\infty)^{7}\times[0,T])$ (see definition in  {\em Notational conventions} in Introduction)	and $\nocc_F([0,\infty)^7\times [0,t]) =t$ for any $t \in [0,T]$. The following proposition establishes the necessary tightness of $(\nocc_F,\nZ_S)$.

 \begin{proposition} \label{prop:rel_compactness} 
	Suppose that \Cref{assum:main} holds. Then for any $T>0$, the sequence $(\nocc_F,\nZ_S)$ is relatively compact as $\spmeas_T([0,\infty)^{7}\times[0,T])\times D([0, T], [0,\infty)^3)$-valued random variables. Furthermore, the limit points of $\nZ_S$ are almost surely in $C([0, T],  [0,\infty)^3)$. 
\end{proposition}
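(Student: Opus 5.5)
The plan is to prove the two tightness statements separately, using localization by stopping times throughout. Tightness of a pair follows from tightness of each coordinate.

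\emph{Occupation measures.} The total mass of $\nocc_F$ is fixed at $T$. So, by the standard criterion for random measures on $[0,\infty)^7\times[0,T]$, it suffices that $\{\int_0^T \phi(\nZ_F(s))\,ds\}$ is tight for some $\phi$ with compact sublevel sets, e.g.\ $\phi(z_F)=\|z_F\|_1$.

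\emph{Enzyme coordinates.} These are controlled at once by the conservation laws \eqref{eq:cons_laws}: they are bounded by $\nconsconst_1+\nconsconst_2$, which is tight by \Cref{assum:main}\ref{item:cons-const}.

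\emph{The $A_3$ coordinate.} The whole difficulty is to show that $\int_0^T \nZ_{A_3}(s)\,ds$ (in fact $\int_0^T (\nZ_{A_3})^{p}\,ds$ for some $p>1$) is tight. I would first derive preliminary bounds for the slow species.
\begin{itemize}
\item $\nZ_{A_1}(t)\le \nZ_{A_1}(0)+n^{-1}\nR_0(t)$, so $\sup_{t\le T}\nZ_{A_1}$ is tight.
\item For $A_4$, the jumps are of size $n^{-2}$ and the rates are $O(n)$. Hence $\sup_{t\le T}|\nZ_{A_4}(t)-\nZ_{A_4}(0)|\prt 0$, provided $\int_0^T \nZ_{A_3}\nZ_{A_4}\,ds$ and $\int_0^T (\nZ_{A_2})^2\,ds$ are $O_P(1)$.
\item Introduce the stopping time $\tau_n$, the first time $\nZ_{A_4}$ drops below $\delta_0/2$ or any of the slow coordinates exceeds a level $M$.
\end{itemize}

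\emph{Itô estimate for $A_3$.} Apply the Itô formula for PRM-driven SDEs to $f(\nZ_{A_3})=n^{-1}(\nZ_{A_3})^{\aexp}$ up to $\tau_n$. The dominant drift comes from reactions $\pm 8$, which fire at rate $O(n)$:
\[
n^{-1}\cdot n\,\aexp (\nZ_{A_3})^{\aexp-1}\big(\kappa_8 (\nZ_{A_2})^2-\kappa_{-8}\nZ_{A_3}\nZ_{A_4}\big)+\text{lower order}.
\]
The activation reactions $\pm7,\pm7'$ contribute $O(n^{-1/2})(\nZ_{A_3})^{\aexp-1}$, times bounded enzyme counts. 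The jump-correction terms contribute $O(n^{-1}\cdot n)(\nZ_{A_3})^{\aexp-2}$ times the rates. On $[0,\tau_n]$ we have $\nZ_{A_4}\ge\delta_0/2$, so the negative term dominates and yields
\[
\frac{\kappa_{-8}\delta_0}{2}\int_0^{t\wedge\tau_n}(\nZ_{A_3})^{\aexp}\,ds \;\le\; n^{-1}(\nZ_{A_3}(0))^{\aexp}+C_M\int_0^{t\wedge\tau_n}\big(1+(\nZ_{A_3})^{\aexp-1}\big)ds+\text{martingale}.
\]
Young's inequality absorbs the $(\nZ_{A_3})^{\aexp-1}$ term into the left-hand side. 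The initial term is tight by \Cref{assum:main}\ref{item:initial-other}; this is exactly why the assumption $\aexp>2$ is stated. The martingale needs to be controlled only in probability, which its quadratic variation (of order $n^{-1}$ times similar integrals) allows.

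\emph{The $A_2$ coordinate and removal of localization.} Next apply Itô to $\nZ_{A_2}$, or to a power of it. The gain terms are $z_{E_1A_1}+z_{E_1^*A_1}$ (bounded) and $2\kappa_{-8}\nZ_{A_3}\nZ_{A_4}$, whose integral is now controlled. The loss term $-2\kappa_8(\nZ_{A_2})^2$ supplies integrability of $(\nZ_{A_2})^2$. Closing this loop shows $\PP(\tau_n\le T)\to 0$ as $M\to\infty$ uniformly in $n$, which removes the localization. This coupled $A_2$--$A_3$--$A_4$ estimate, driven by the rapid $O(n)$ fluctuations of AMP, is the step I expect to be the main obstacle.

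\emph{Tightness of $\nZ_S$.} Each coordinate of $\nZ_S$ equals its initial value plus $n^{-1}$ times compensated counting processes, plus drifts. The drift integrands are $\kappa_0$, $z_{E_1}z_{A_1}$, $z_{E_1A_1}$, $z_{E_2}z_{A_2}$, $z_{A_2}^2$ and $z_{A_3}z_{A_4}$ (and $n^{-1}$ times these for $A_4$). I would verify Aldous' criterion: for stopping times $\sigma_n\le T$ and $\delta_n\to0$,
\[
\nZ_S(\sigma_n+\delta_n)-\nZ_S(\sigma_n)\prt 0 .
\]
The drift part is handled by uniform integrability of the integrands, using the $L^{\aexp}$-type bound on $\nZ_{A_3}$ with $\aexp>1$ together with the localization above. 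The martingale parts have quadratic variation $O(n^{-1})$ and vanish. Finally, all jumps of $\nZ_S$ have size at most $2/n$, so every limit point is a.s.\ continuous; that is, $\nZ_S$ is $C$-tight.
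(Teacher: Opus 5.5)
\textbf{There is a genuine gap, in the step you flag as the main obstacle:} the a priori bound on $\nZ_{A_2}$ that is supposed to remove the localization $\tau_n$.

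The loss term $-2\kappa_8(\nZ_{A_2})^2$ does \emph{not} supply integrability of $(\nZ_{A_2})^2$. Add $2n^{-1}$ times the $A_3$ equation to the $A_2$ equation in \eqref{eq:full_gly_scaled_eq}. The counts $\nR_8$ and $\nR_{-8}$ then cancel identically, drift and martingale parts alike, since both are driven by the same Poisson random measures. Equivalently, the gain $2\kappa_{-8}\int_0^t\nZ_{A_3}\nZ_{A_4}\,ds$ and the loss $2\kappa_8\int_0^t\nZ_{A_2}(\nZ_{A_2}-n^{-1})\,ds$ differ only by three things: $2n^{-1}(\nZ_{A_3}(0)-\nZ_{A_3}(t))$, $2n^{-1}$ times the activation counts $\nR_{\pm7},\nR_{\pm7'}$, and martingales. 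The pair $\pm8$ is a fast equilibrium with no net effect on $A_2$, so the $A_2$ equation carries no information on $\int(\nZ_{A_2})^2$. Your loop therefore does not close as described:
\begin{itemize}
\item If you bound the gain by what your $A_3$ step delivers on $[0,\tau_n]$, that constant depends on $M$ through $(\nZ_{A_2})^2\le M^2$ and $\nZ_{A_4}\le M$, and Young's inequality inflates it further. You then get only $\sup_{t\le T\wedge\tau_n}\nZ_{A_2}(t)\le(\text{tight})+CM^{c}$ with $c\ge2$. This cannot give $\PP(\tau_n\le T)\to0$ uniformly in $n$.
\item If instead you drop the loss term and use the $A_3$ balance for the gain, you get $y(t)\le C+c\int_0^t y(s)^2\,ds$, which permits blow-up.
\end{itemize}

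\textbf{The missing idea is exactly this cancellation.} Dropping the negative counts in the combined identity gives
\[
\nZ_{A_2}(t)\le \nZ_{A_2}(0)+2n^{-1}\nZ_{A_3}(0)+n^{-1}\big[\nR_2+\nR_4+\nR_{-5}+2\nR_{-7}+2\nR_{-7'}\big](t).
\]
Every reaction on the right has intensity linear in enzyme counts, which are bounded by $\nconsconst_1,\nconsconst_2$. Hence $\sup_n\EE[\sup_{t\le T}(\nZ_{A_2}(t))^2]<\infty$ with no localization at all; this is \Cref{lem:aux-tight}:\ref{item:A1-A2-tight}. After that the chain runs in one direction only:
\begin{enumerate}
\item $\nR_{-8}\le\nZ_{A_3}(0)+\nR_{-7}+\nR_{-7'}+\nR_8$ gives $\sup_n\EE\int_0^T\nZ_{A_3}\nZ_{A_4}\,ds<\infty$.
\item Hence $\nZ_{A_4}$ stays close to $Z_{A_4}(0)>\delta_0$.
\item Hence $\int_0^T\nZ_{A_3}\,ds$ is tight. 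This power-one bound is all the occupation-measure part needs.
\end{enumerate}

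\textbf{Secondary points.}
\begin{itemize}
\item In your It\^o step for $n^{-1}(\nZ_{A_3})^{\aexp}$, the predictable quadratic variation of the $\pm8$ martingale is of order $n^{-1}\int(\nZ_{A_3})^{2\aexp-2}\big((\nZ_{A_2})^2+\nZ_{A_3}\nZ_{A_4}\big)ds$. It involves $(\nZ_{A_3})^{2\aexp-1}$, so it is not controlled by ``similar integrals''. You need control of $n^{-1}\sup_t(\nZ_{A_3})^{\aexp-1}$, which the paper obtains by induction on the exponent in \Cref{prop:A3-tight-prop}. You would have to supply this, or localize on $n^{-1}(\nZ_{A_3})^{\aexp}$ as well.
\item Your $A_1$ bound omits $n^{-1}(\nR_{-1}+\nR_{-3})$; this is harmless.
\end{itemize}
With these repairs, your Aldous argument for $\nZ_S$ is a valid alternative to the paper's route. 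You would use H\"older with the tight sequence $\int_0^T(\nZ_{A_3})^{\aexp}\,ds$, $\aexp>1$. The paper instead uses $n^{-1}\sup_t\nZ_{A_3}\prt0$ together with the $A_3$ equation for $n^{-1}\nR_{-8}$.
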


We are now ready to state our main convergence result.

\begin{theorem}	\label{thm:FLLN-Glyco}
 Suppose that \Cref{assum:main} holds. Then, as $n \rt \infty$, $\nZ_S \RT Z_S = (Z_{A_1}, Z_{A_2}, Z_{A_4})$, where the path-space of of $(Z_{A_1}, Z_{A_2})$ is $C([0,T], [0,\infty)^2)$,  for $\PP$-a.a. $\om \in \Omega$,   $Z_{A_4}(t, \om) \equiv\ Z_{A_4}(0)$ and  $(Z_{A_1}(\cdot, \om), Z_{A_2}(\cdot, \om))$ solves the (random) ODE
  \begin{align}\label{eq:FLLN_Glyco}
		\begin{aligned}
	   \f{\diff Z_{A_1}(t)}{\diff t} =&\  \kappa_0 - f\left(Z_{A_1}(t),Z_{A_2}(t), Z_{A_4}(0)\right)\\
	   \f{\diff Z_{A_2}(t)}{\diff t}=&\  f\left(Z_{A_1}(t),Z_{A_2}(t), Z_{A_4}(0)\right) 
	   -g\left(Z_{A_2}(t)\right)\\
	   Z_{A_4}(t) \equiv&\ Z_{A_4}(0)
	   \end{aligned}
	\end{align}
	with initial condition $Z_S(0, \om) = (Z_{A_1}(0,\om), Z_{A_2}(0,\om) , Z_{A_4}(0,\om))$ at time zero. Here
	\begin{align}
	\label{eq:FLLN-ODE-fun}
	\begin{aligned}
	f(z_{A_1},z_{A_2}, z_{A_4}) &=\frac{1}{K_1z_{A_4}+z_{A_2}^2}\left[\frac{\consconst^{\bullet}_1 K_1z_{A_1}z_{A_4}}{K_{M_1}+z_{A_1}}+\frac{\consconst^\star_1 z_{A_1}z_{A_2}^2}{K_{M_1}^\star+z_{A_1}}\right], \\
	g(z_{A_2}) &= \frac{\consconst^{\bullet}_2 z_{A_2}}{K_{M_2}+z_{A_2}},
	\end{aligned}
\end{align}	
and 
\begin{align}\label{eq:rates}
	\begin{alignedat}{4}
	 K_1 & \equiv \frac{\kappa_{-7}\kappa_{-8} }{\kappa_7 \kappa_8},& \qquad
	 K_{M_1}& \equiv \frac{\kappa_{-1}+\kappa_{2}}{k_1},& \qquad
	 K_{M_1}^\star & \equiv \frac{\kappa_{-3}+\kappa_{4}}{\kappa_3},& \qquad
	 K_{M_2} & \equiv \frac{\kappa_{-5}+\kappa_{6}}{\kappa_5}\\
	 \consconst^{\bullet}_1& \equiv \kappa_2\consconst_1, & \qquad	
	 \consconst^\star_1 & \equiv \kappa_4\consconst_1, & \qquad	
	 \consconst^{\bullet}_2& \equiv \kappa_6\consconst_2. & &
	\end{alignedat}
\end{align}

 In particular, if $Z_S(0), \consconst_1$ and $\consconst_2$ are deterministic (non-random), then $Z_S$ is deterministic, and hence as $n\rt \infty$, $\nZ_S \prt Z_S$ in $C([0,T],[0,\infty))$, that is, 
	\begin{align*}
		\sup_{t\leq T}\|\nZ_S(t)-Z_S(t)\| \prt 0.
	\end{align*}
	\end{theorem}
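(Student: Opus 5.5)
We follow the standard averaging route, starting from the relative compactness in \Cref{prop:rel_compactness}. Fix a subsequence along which $(\nocc_F,\nZ_S)\RT(\occ_F,Z_S)$, jointly with the initial data and conservation constants of \Cref{assum:main}\ref{item:initial-other}. By Skorokhod representation we may work almost surely. Since $Z_S$ is continuous, $\occ_F(\diff z_F\times\diff s)=\gamma_s(\diff z_F)\,\diff s$ for a measurable family of probability measures $\gamma_s$, supported on $\{z_{E_1}+z_{E_1^*}+z_{E_1A_1}+z_{E_1^*A_1}=\consconst_1,\ z_{E_2}+z_{E_2A_2}=\consconst_2\}$. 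The argument then has three steps: identify the slow equations in terms of $\gamma_s$, extract enough information about $\gamma_s$ to make these equations explicit, and prove uniqueness.

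\textbf{The $A_4$ component and the slow drifts.} The increments of $\nZ_{A_4}$ are $n^{-2}(\nR_8-\nR_{-8})$, and their compensator is
\[
\int_0^t\bigl(\kappa_8 \nZ_{A_2}^2-\kappa_{-8}n^{-1}\nZ_{A_3}\nZ_{A_4}\bigr)\diff s+O(n^{-1}).
\]
The moment bounds from the proof of \Cref{prop:rel_compactness} make this $O(n^{-1})$, and the martingale part is $O(n^{-1})$ in $L^2$. Hence $Z_{A_4}\equiv Z_{A_4}(0)$.

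For $A_1$ and $A_2$, write the compensators of the $n^{-1}$-scaled counting processes. For example,
\[
\nZ_{A_1}(t)=\nZ_{A_1}(0)+\kappa_0t-\int_0^t\bigl(\kappa_1 \nZ_{E_1}\nZ_{A_1}-\kappa_{-1}\nZ_{E_1A_1}+\kappa_3\nZ_{E_1^*}\nZ_{A_1}-\kappa_{-3}\nZ_{E_1^*A_1}\bigr)\diff s+M^n(t),
\]
with $M^n\to0$. The fast quantities in this drift cannot be averaged directly, because the reactions $\pm1,\pm3$ run at rate $O(n)$. Instead we use the equation for $\nZ_{E_1A_1}+\nZ_{E_1^*A_1}$. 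Dividing it by $n$ shows that
\[
\int_0^t\bigl(\kappa_1 \nZ_{E_1}\nZ_{A_1}-(\kappa_{-1}+\kappa_2)\nZ_{E_1A_1}+\kappa_3\nZ_{E_1^*}\nZ_{A_1}-(\kappa_{-3}+\kappa_4)\nZ_{E_1^*A_1}\bigr)\diff s\to0.
\]
The drift of $A_1$ is therefore $\kappa_0-\int\!\!\int(\kappa_2 z_{E_1A_1}+\kappa_4 z_{E_1^*A_1})\gamma_s(\diff z)\diff s$. The analogous computation for $A_2$ gives the same term minus $\kappa_6\int z_{E_2A_2}\,\gamma_s$. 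The $\pm8$ contribution has to be handled here: $2n^{-1}(\nR_8-\nR_{-8})$ has compensator of order $n\cdot n^{-1}\cdot n$. We control it through the $A_4$ and $A_3$ equations, since $\nR_8-\nR_{-8}=n^2(\nZ_{A_4}-\nZ_{A_4}(0))$. This term is exactly where the moment estimates on $A_3$ from \Cref{prop:rel_compactness} are needed.

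\textbf{Identifying $\gamma_s$.} For each test function $\phi$ of the fast variables, we apply It\^o's formula to $\phi(\nZ_F)$, divide by the fastest rate, and pass to the limit. This yields a family of linear constraints on $\gamma_s$. The constraints appear at two levels.

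At order $n$, the Michaelis–Menten and $\pm7$ blocks give
\[
\int(\kappa_1 z_{E_1}Z_{A_1}-(\kappa_{-1}+\kappa_2)z_{E_1A_1})\gamma_s=0,
\]
together with its analogues for $E_1^*A_1$ and $E_2A_2$. We derive these from the martingale decompositions of $\nZ_{E_1A_1}$, $\nZ_{E_1^*A_1}$ and $\nZ_{E_2A_2}$ divided by $n$. A priori they are only linear constraints on first moments.

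At order $n^{3/2}$ the $\pm7$ reactions force partial equilibrium, $\kappa_7 z_{E_1}z_{A_3}\approx\kappa_{-7}z_{E_1^*}$, and similarly for the complexes. Combined with the $\pm8$ balance $\kappa_8Z_{A_2}^2\approx\kappa_{-8}z_{A_3}Z_{A_4}(0)$ coming from the $A_3$ equation, this gives $z_{A_3}\approx\kappa_8Z_{A_2}^2/(\kappa_{-8}Z_{A_4}(0))$.

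The key point is that these moment identities do not require $\gamma_s$ to be unique or explicit. We apply them to product observables, using identities of the form $\int z_{A_3}z_{E_1}\gamma_s=\rho\int z_{E_1}\gamma_s$. The partial-equilibrium relations involve $z_{A_3}z_{E_1}$, so we must show that $z_{A_3}$ concentrates at the deterministic value $\rho(s)=\kappa_8Z_{A_2}^2/(\kappa_{-8}Z_{A_4}(0))$ under $\gamma_s$. To do so, we use a quadratic test function $(z_{A_3}-\rho)^2$, together with the $\aexp>2$ moment assumption to obtain uniform integrability. This concentration step is the main obstacle. Once it is established, the first moments $m_{E_1},m_{E_1^*},m_{E_1A_1},m_{E_1^*A_1}$ solve a linear system closed by $\sum m=\consconst_1$. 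Solving it gives
\[
\kappa_2m_{E_1A_1}+\kappa_4m_{E_1^*A_1}=f(Z_{A_1},Z_{A_2},Z_{A_4}(0)),\qquad \kappa_6m_{E_2A_2}=g(Z_{A_2}).
\]
Here $K_1$ arises from combining the $\pm7$ and $\pm8$ equilibria. This identifies the limit as \eqref{eq:FLLN_Glyco}.

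\textbf{Uniqueness and the deterministic case.} Finally, $f$ and $g$ are locally Lipschitz on $[0,\infty)^2$ for fixed $Z_{A_4}(0)>\delta_0$. Moreover, $Z_{A_1}+Z_{A_2}\le Z_{A_1}(0)+Z_{A_2}(0)+\kappa_0 t$, so solutions stay bounded on $[0,T]$. The random ODE therefore has a unique pathwise solution. Consequently every limit point has the same law, and the whole sequence converges. In the deterministic case, convergence in distribution to a constant continuous limit gives convergence in probability in the sup norm.
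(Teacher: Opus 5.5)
\textbf{Verdict: genuine gap.} Your reduction of the slow drifts is fine: you use the $E_1A_1+E_1^*A_1$ and $E_2A_2$ equations divided by $n$ to replace the drift by $\kappa_2 z_{E_1A_1}+\kappa_4 z_{E_1^*A_1}$ and $\kappa_6 z_{E_2A_2}$. The gap is in the identification of $\gamma_s$, and it occurs in two places.

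\textbf{The $\pm7$ time scale.} Since $\beta_{\pm7}=1/2$ and $E_1$, $E_1A_1$, $A_3$ are unscaled, these reactions fire at rate $O(n^{1/2})$. That is \emph{slower} than the $O(n)$ Michaelis--Menten and $\pm8$ reactions; there is no order-$n^{3/2}$ layer. This is exactly why $\fgen_{z_S}$ contains no $\pm7$ terms, conserves $z_{E_1}+z_{E_1A_1}$, and has non-unique invariant laws.
\begin{itemize}
\item Dividing the $E_1$ or $E_1A_1$ equation by $n^{1/2}$ leaves the $\pm1,2$ counts, of size $n$, uncontrolled. So your separate partial equilibria, $\kappa_7\int z_{A_3}z_{E_1}\gamma_s=\kappa_{-7}\int z_{E_1^*}\gamma_s$ and its analogue for the complexes, cannot be derived.
\item Only the combination $E_1+E_1A_1$, in which $\pm1,2$ cancel, gives (via \eqref{eq:react7+7'}) the aggregated balance \eqref{eq:stat-dist-constrnt}.
\item Worse, the separate relations are false in this regime. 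Together with your two Michaelis--Menten relations they force $m_{E_1A_1}/m_{E_1}=Z_{A_1}/K_{M_1}=Z_{A_1}/K^\star_{M_1}$. So for $Z_{A_1},Z_{A_2}>0$, either $K_{M_1}=K^\star_{M_1}$ or all four means vanish, which contradicts $\sum m=\consconst_1$.
\end{itemize}
Your linear system is therefore over-determined and generically inconsistent, and ``solving it gives $f$'' does not follow. The form of $f$ is the signature of the aggregated balance. The unactivated pool has mass $\consconst_1K_1z_{A_4}/(K_1z_{A_4}+z_{A_2}^2)$ and is split by $K_{M_1}$, while the activated pool is split by $K^\star_{M_1}$.

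\textbf{The concentration step is false.} $z_{A_3}$ is an unscaled integer count. Under any invariant law of $\fgen_{z_S}$ it is Poisson with mean $\rho$. Your own test function shows this:
\[
\fgen_{z_S}(z_{A_3}-\rho)^2=-2\kappa_{-8}z_{A_4}(z_{A_3}-\rho)^2+\kappa_8z_{A_2}^2+\kappa_{-8}z_{A_4}z_{A_3},
\]
whence $\int(z_{A_3}-\rho)^2\gamma_s=\rho>0$.

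What is actually needed is decorrelation, and only for the aggregated pool. The paper takes $g=z_{A_3}(z_{E_1}+z_{E_1A_1})$, which is admissible because it is linear in $z_{A_3}$ and $\aexp>2$. Because $\fgen_{z_S}$ conserves $z_{E_1}+z_{E_1A_1}$, one gets $\fgen_{z_S}g=(z_{E_1}+z_{E_1A_1})(\kappa_8z_{A_2}^2-\kappa_{-8}z_{A_4}z_{A_3})$. Hence $\int z_{A_3}(z_{E_1}+z_{E_1A_1})\gamma_s=\rho\,(m_{E_1}+m_{E_1A_1})$. Combined with \eqref{eq:stat-dist-constrnt} and conservation, this determines $m_{E_1}+m_{E_1A_1}$. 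The two individual Michaelis--Menten relations then give \eqref{eq:mean-E1-E1*}, and hence $f$.

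\textbf{Smaller slips.}
\begin{itemize}
\item Your $A_4$ compensator should carry an overall factor $n^{-1}$.
\item Each of $n^{-1}\nR_{\pm8}$ has an $O(1)$ compensator, not $O(n)$.
\item Their difference is negligible because of the $A_3$ equation divided by $n$ (equivalently $\int z_{A_3}\gamma_s=\rho$), not because of the $A_4$ equation.
\end{itemize}
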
	

We now toward the proofs of these results, which are technically involved and require a number of delicate estimates relying on a careful analysis of the reaction network structure.

To this end, we first observe that the reaction-number process $\nR_k$ defined by \eqref{eq:react-num}, and its `centered version' 
$$\tnR_k(\cdot) \dfeq \int_{[0,\infty)\times[0,\cdot]} \indic_{[0, \nlambda_k(\nX(s-))]}(u)\tilde\xi_k(du\times ds) = \nR_k(\cdot)-\int_0^\cdot \nlambda_k(\nX(s))\ ds$$
can  be written in terms of the $\nZ$ process as follows:
\begin{equation}\label{eq:react-process-1}
	\begin{aligned}
		&  \Big(\nR_{k}(t), \tnR_{k}(t)\Big) \\
		&= 
		\begin{cases}
			\lf(\xi_0\big(n\kappa_0 t\big),\tilde\xi_0\big(n\kappa_0 t\big)\ri), 
			& k=0, \\[6pt]
			\displaystyle 
			\int_{[0,\infty)\times[0,t]} \indic_{[0, n \l_k(\nZ(s-))]}(u)\,
			\lf(\xi_k(du\times ds),\ \tilde\xi_k(du\times ds)\ri), 
			& k= \pm 1, 2, \pm 3, 4,  \pm 5, 6, \pm 8, \\
			\displaystyle 
			\int_{[0,\infty)\times[0,t]} \indic_{[0, n^{1/2}\l_k(\nZ(s-))]}(u)\,
			\lf(\xi_k(du\times ds),\ \tilde\xi_k(du\times ds)\ri), 
			& k=\pm7,\pm7',
		\end{cases}
	\end{aligned}
\end{equation}
where recall that for each $k \in \mathcal{R}$,  $\xi_k$ is  the PRM on $[0,\infty)\times[0,\infty)$ with mean measure $\leb\ot\leb$, and $\tilde{\xi}_k$ defined by $\tilde{\xi}_k(A\times [0,t]) - \leb(A)t$ is the corresponding compensated PRM.

%
%\begin{align}\label{eq:react-process-1}
%	\begin{aligned}
%\lf(\nR_{k}(t), \tnR_{k}(t)\ri) = \begin{cases}
%\lf(\xi_0\left(n \kappa_0 t\right), \tilde\xi_0\left(n \kappa_0 t\right)\ri), & \  k=0\\
%\displaystyle \int_{[0,\infty)\times[0,t]} \indic_{[0, n \l_k(\nZ(s-))]}(u) \lf(\xi_k(du\times ds), \tilde\xi_k(du\times ds)\ri), & \   k=\pm 1, 2, \pm 3, 4,\\
%& \hs{1.2cm} \pm 5, 6, \pm 8\\
%\displaystyle \int_{[0,\infty)\times[0,t]} \indic_{[0, n^{1/2} \l_k(\nZ(s-))]}(u) \xi_k(du\times ds), \lf(\xi_k(du\times ds), \tilde\xi_k(du\times ds)\ri), & \   k=\pm 7, \pm 7'.
%\end{cases}  
%\end{aligned}
%\end{align}
The $\l_k$ appearing in \eqref{eq:react-process-1} represent `scaled' propensities, and are defined as follows: $\l_0(z) \equiv \kappa_0$ and 
\begin{align}\label{eq:scaled-prop}
\begin{alignedat}{4}
\l_1(z)   &= \kappa_1 z_{E_1}z_{A_1}, &\quad 
\l_{-1}(z) &= \kappa_{-1} z_{E_1A_1}, &\quad 
\l_{2}(z)  &= \kappa_2 z_{E_1A_1}, \\
\l_{3}(z)  &= \kappa_3 z_{E_1^*}z_{A_1}, &\quad 
\l_{-3}(z) &= \kappa_{-3} z_{E_1^*A_1}, &\quad 
\l_{4}(z)  &= \kappa_4 z_{E_1^*A_1}, \\ 
\l_{5}(z)  &= \kappa_5 z_{E_2}z_{A_2}, &\quad 
\l_{-5}(z) &= \kappa_{-5} z_{E_2A_2}, &\quad 
\l_{6}(z) &= \kappa_{6} z_{E_2A_2}, \\
\l_{7}(z)  &= \kappa_7 z_{E_1}z_{A_3}, &\quad 
\l_{-7}(z) &= \kappa_{-7} z_{E_1^*}, &\quad 
\l_{7'}(z) &= \kappa_7 z_{E_1A_1}z_{A_3}, \\ 
\l_{-7'}(z) &= \kappa_{-7} z_{E_1^*A_1}, &\quad 
\l_{8}(z)  &= \kappa_8 z_{A_2}\!\left(z_{A_2}-n^{-1}\right), &\quad 
\l_{-8}(z) &= \kappa_{-8} z_{A_3}z_{A_4}. & & 
\end{alignedat}
\end{align}
For convenience, we introduce the following notation:
\begin{align}
	\label{eq:mean_scaled_reaction-0}
		\nmeanreact_k(t) = \int_0^t \l_k(\nZ(s))\ ds, \quad k=0, \pm 1, 2, \pm 3, 4, \pm 5, 6, \pm 8, \pm 7, \pm 7'.
\end{align}
Clearly,
\begin{align}\label{eq:react-split}
\nR_k = 
\begin{cases}
	\tnR_k + n \,\nmeanreact_k, & k = 0, \pm 1, 2, \pm 3, 4, \pm 5, 6, \pm 8,\\[2mm]
	\tnR_k + n^{1/2} \,\nmeanreact_k, & k = \pm 7, \pm 7',
\end{cases}
\end{align}
and since each $\tnR_k$ is a martingale, 
\begin{align}
\begin{aligned}
\label{eq:exp-react-sq}
\EE\lf[\nR_k(t)\ri]  =&\
\begin{cases}
n \EE\lf[\nmeanreact_k(t)\ri], & \quad k=\pm 1, 2, \pm 3, 4, \pm 5, 6, \pm 8\\
n^{1/2} \EE\lf[\nmeanreact_k(t)\ri], & \quad k=\pm 7, \pm 7'.
\end{cases}
\\[1em]
\EE\lf[(\nR_k(t))^2\ri] = &\ 
\begin{cases}
\EE\lf[\lf(n \nmeanreact_k(t)\ri)^2 + n \nmeanreact_k(t)\ri], & \quad k=\pm 1, 2, \pm 3, 4, \pm 5, 6, \pm 8\\
\EE\lf[\lf(n^{1/2} \nmeanreact_k(t)\ri)^2 + n^{1/2} \nmeanreact_k(t)\ri], & \quad k=\pm 7, \pm 7'.
\end{cases}
\end{aligned}
\end{align}

To prove \Cref{prop:rel_compactness}, we begin with the following lemma, which provides the necessary moment estimates and establishes $C$-tightness (see \Cref{def:C-tight}) for some of the scaled reaction-count processes.
\begin{lemma}\label{lem:aux-tight}
Under  \Cref{assum:main}, the following assertions hold:
\begin{enumerate}[label=(\roman*), ref=(\roman*)]
\item \label{item:react-tight-1} For $k = 0, -1, 2, -3, 4, -5, 6, -7, -7'$, $\sup_n \EE\lf[(\nmeanreact_k(T))^2\ri]<\infty$ and the sequences of  processes $\{\nmeanreact_k\}$ are tight in $C([0,T],[0,\infty))$.  For $k = 0, -1, 2, -3, 4, -5, 6$, the sequences of  processes  $\{n^{-1} \nR_k\}$, and for $k = -7, -7'$,  $\{n^{-1/2} \nR_k\}$ are $C$-tight in $D([0,T], [0,\infty))$. 

\item \label{item:A1-A2-tight} $\sup_n\EE\lf[\sup_{t\leq T}\lf(\nZ_{A_1}(t)\ri)^2\ri]\ \vee\ \sup_n\EE\lf[\sup_{t\leq T}\lf(\nZ_{A_2}(t)\ri)^2\ri]  < \infty$. Furthermore, \\ $\sup_n\EE\lf[\sup_{t\leq T}\lf(\nZ_{A_3}(t)/n\ri)^2\ri] <\infty$.

\item \label{item:react-tight-2} For $k=1,3,5,8$, $\sup_n \EE\lf[\nmeanreact_k(T)\ri]<\infty$, the sequences of processes $\lf\{\nmeanreact_k\ri\}$ are tight in $ C([0,T], [0,\infty))$, and $\lf\{n^{-1} \nR_k\ri\}$ are $C$-tight in $D([0,T], [0,\infty))$.

\item \label{item:react-tight-7-7'} The sequence of processes  $\lf\{n^{-1/2}(\nR_{7}+\nR_{7'})\ri\}$ is $C$-tight in $D([0,T], [0,\infty))$. Furthermore,
\begin{align}
\label{eq:mmt-react-bd-7-7'-8}
\sup_n\EE\lf[\Big(\nmeanreact_{7}(T)\Big)^2\ri] \vee \sup_n\EE\lf[\Big(\nmeanreact_{7'}(T)\Big)^2\ri]< \infty, \quad \sup_n\EE\lf[\nmeanreact_{-8}(T)\ri] < \infty.
\end{align}
%In particular, for every fixed $t>0$, the sequences of random variables, $ \{n^{-1/2}\nR_{7}(t)\}, \{n^{-1/2}\nR_{7'}(t)\}, \{\nmeanreact_7(t)\}, \{\nmeanreact_{7'}(t)\}$ are tight. Furthermore, for each $t>0$, $\{n^{-1}\nR_{-8}(t)\}$ and $\{\nmeanreact_{-8}(t)\}$ are tight.

\item  \label{item:conv-A4} $\EE\lf(\sup_{t\leq T}|\nZ_{A_4}(t) - \nZ_{A_4}(0)| \ri)\rt 0,$ and $\sup_{t\leq T}|\nZ_{A_4}(t) - Z_{A_4}(0)| \prt 0$ as $n\rt \infty$

%\item \label{item:A3-int-tight} For any $t>0$, $\{\int_0^t \nZ_{A_3}(s)\ ds\}$ is tight.
\end{enumerate}
\end{lemma}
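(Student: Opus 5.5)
The plan is to prove the items in the order \ref{item:react-tight-1}, \ref{item:A1-A2-tight}, \ref{item:react-tight-2}, \ref{item:react-tight-7-7'}, \ref{item:conv-A4}. The tool throughout is the same: moment bounds on the compensators $\nmeanreact_k$, combined with the martingale decomposition \eqref{eq:react-split}, give tightness.

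\textbf{Item \ref{item:react-tight-1}.} The reactions $k=0,-1,2,-3,4,-5,6,-7,-7'$ have propensities that are linear in bounded fast species. By the conservation laws \eqref{eq:cons_laws}, $\l_k(\nZ(s))\le C(\nconsconst_1+\nconsconst_2)$ for $k\neq 0$. Hence
\[
\nmeanreact_k(t)-\nmeanreact_k(s)\le C(\nconsconst_1+\nconsconst_2)(t-s),
\]
so these processes are uniformly Lipschitz with square-integrable Lipschitz constant by Assumption~\ref{assum:main}\ref{item:cons-const}. This gives the second-moment bound and tightness in $C$ via Arzel\`a--Ascoli. For $C$-tightness of $n^{-1}\nR_k$ (respectively $n^{-1/2}\nR_k$), write it as $\nmeanreact_k+n^{-1}\tnR_k$ (respectively $n^{-1/2}\tnR_k$). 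Doob's inequality gives
\[
\EE\sup_{t\le T}|n^{-1}\tnR_k(t)|^2\le 4n^{-1}\EE\,\nmeanreact_k(T)\to0,
\]
and similarly with $n^{-1/2}$ for $k=-7,-7'$. So the martingale parts vanish uniformly and the sum is $C$-tight.

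\textbf{Item \ref{item:A1-A2-tight}.} For $A_1$, drop the negative terms: $\nZ_{A_1}(t)\le \nZ_{A_1}(0)+n^{-1}\big(\nR_0+\nR_{-1}+\nR_{-3}\big)(t)$. The right side has bounded second moment of its supremum by item \ref{item:react-tight-1}; for the initial term I will use Assumption~\ref{assum:main}\ref{item:mmt-bd-initial} (strengthening to second moments where needed, or localizing). For $A_2$ the positive increments are $\nR_2,\nR_4,\nR_{-5}$ and $2\nR_{-8}$. The last is not yet controlled, so I would combine species to cancel it. The quantity
\[
\nZ_{A_2}+2n^{-1}\nZ_{A_3}+2n\,\nZ_{A_4}\cdot(\text{suitable})
\]
does not work, since $A_4$ gains from $\nR_8$. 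Instead use $W=\nZ_{A_2}+2n^{-1}\nZ_{A_3}$. Reaction $8$ changes this by $-2n^{-1}+2n^{-1}=0$, and reaction $-8$ changes it by $0$ as well. The remaining contributions are $\nR_7,\nR_{-7},\nR_{7'},\nR_{-7'}$ scaled by $n^{-1}$; the negative ones are dropped, and the positive ones, $n^{-1}(\nR_{-7}+\nR_{-7'})$, are $O(n^{-1/2})$ by item \ref{item:react-tight-1}. Hence $\sup_t W$ is bounded in $L^2$, which controls both $\nZ_{A_2}$ and $\nZ_{A_3}/n$, using the tightness of $n^{-1}\nZ_{A_3}(0)$.

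\textbf{Item \ref{item:react-tight-2}.} Take expectations in the $A_1$ equation: $\EE[\nmeanreact_1(T)+\nmeanreact_3(T)]\le \EE\nZ_{A_1}(0)+\EE[\nmeanreact_0+\nmeanreact_{-1}+\nmeanreact_{-3}](T)$, which is bounded. The $E_2$ equation similarly bounds $\nmeanreact_5$. For $\nmeanreact_8$, use the $W$-equation before dropping terms, via the $A_2$ equation: $2\EE\nmeanreact_8(T)\le \EE\nZ_{A_2}(0)+\EE[\cdots]+2\EE\nmeanreact_{-8}(T)$. This requires $\nmeanreact_{-8}$, so item \ref{item:react-tight-7-7'} is intertwined with this step. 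Tightness in $C$ then needs a modulus-of-continuity estimate. Because $\l_1,\l_3$ involve $\nZ_{A_1}$ times bounded terms, they are Lipschitz with constant $C\consconst\sup\nZ_{A_1}$, and likewise for $\l_5$. For $\l_8\le\kappa_8\sup(\nZ_{A_2})^2$ the constant is square-integrable, so this works.

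\textbf{Item \ref{item:react-tight-7-7'} (main obstacle).} The propensities $\l_7,\l_{7'}$ are proportional to the unbounded fast variable $\nZ_{A_3}$. The idea is to use the conservation $\nZ_{E_1}+\nZ_{E_1A_1}$: its equation gives
\[
n^{1/2}\big(\nmeanreact_7+\nmeanreact_{7'}\big)=n^{1/2}\big(\nmeanreact_{-7}+\nmeanreact_{-7'}\big)+n\,(\text{bounded terms})+\text{martingale}+O(\nconsconst_1).
\]
This is problematic because the $\nR_1$--$\nR_2$ terms cancel within $E_1+E_1A_1$: the $E_1+E_1A_1$ equation involves only $\nR_{\pm7},\nR_{\pm7'}$, while $\nR_4,\nR_3$ do not enter. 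So $n^{-1/2}(\nR_7+\nR_{7'})=n^{-1/2}(\nR_{-7}+\nR_{-7'})+O(n^{-1/2}\nconsconst_1)$. $C$-tightness then follows from item \ref{item:react-tight-1}, and the second moment of $\nmeanreact_7+\nmeanreact_{7'}$ follows after subtracting the martingale, whose second moment is $n^{-1/2}\EE\nmeanreact$ and is absorbed. For $\nmeanreact_{-8}$, use the $A_3$ equation with expectation: $\EE\nmeanreact_{-8}(T)\le n^{-1}\EE\nZ_{A_3}(0)+\EE\nmeanreact_8(T)+n^{-1/2}\EE(\ldots)$. Combining this with the item \ref{item:react-tight-2} relation for $\nmeanreact_8$ gives a circular inequality. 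I would break the circularity by using $W$ directly, adding instead of subtracting, and possibly a stopping-time localization; I expect this to be the delicate step.

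\textbf{Item \ref{item:conv-A4}.} We have $\sup_t|\nZ_{A_4}(t)-\nZ_{A_4}(0)|\le n^{-1}\big(\nmeanreact_8+\nmeanreact_{-8}\big)(T)+n^{-2}\sup_t|\tnR_8-\tnR_{-8}|$. The expectation is $O(n^{-1})$ by the bounds above, and combined with Assumption~\ref{assum:main}\ref{item:initial-other} this yields the claim.
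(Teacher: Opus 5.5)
\textbf{Gap: the bounds on $\nmeanreact_{\pm 8}$ are never closed.} Most of your plan follows the paper's proof: conservation-law bounds plus a martingale estimate for (i), the combination $W=\nZ_{A_2}+2n^{-1}\nZ_{A_3}$ (in which reactions $\pm 8$ cancel) for (ii), and the $E_1+E_1A_1$ identity for $n^{-1/2}(\nR_7+\nR_{7'})$. Your control of $\nmeanreact_1,\nmeanreact_3,\nmeanreact_5$ through the $A_1$ and $E_2$ equations is a valid variant of the paper's intensity bound, and it needs only first moments. The gap is the moment bound on $\nmeanreact_{-8}$ in (iv), and hence (v), which uses it. You bound $\nmeanreact_8$ via the $A_2$ equation, which requires $\nmeanreact_{-8}$. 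You bound $\nmeanreact_{-8}$ via the $A_3$ equation, which requires $\nmeanreact_8$. You then leave this circle open. The remedies you suggest cannot close it. Reactions $8$ and $-8$ have exactly opposite reaction vectors, so in every species equation, in $W$, and in any linear combination or stopped version of these, they enter only through the net count $\nR_8-\nR_{-8}$. No stoichiometric identity, with any signs and after any localization, separates the two counts.

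\textbf{Missing step: use the intensity of reaction $8$.} You already wrote down the needed bound when discussing the modulus of continuity in (iii), but did not use it for the moment estimate. The pointwise bound $\kappa_8\nZ_{A_2}(s)(\nZ_{A_2}(s)-n^{-1})\le \kappa_8\sup_{t\le T}(\nZ_{A_2}(t))^2$ together with item (ii) gives
\[
\sup_n \EE\,\nmeanreact_8(T)\le \kappa_8 T\,\sup_n\EE\sup_{t\le T}(\nZ_{A_2}(t))^2<\infty .
\]
Next, drop the nonpositive terms in the $A_3$ equation, which gives $\nR_{-8}(T)\le \nZ_{A_3}(0)+\nR_{-7}(T)+\nR_{-7'}(T)+\nR_8(T)$. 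Taking expectations and dividing by $n$ gives, with no circularity,
\[
\EE\,\nmeanreact_{-8}(T)\le n^{-1}\EE\,\nZ_{A_3}(0)+n^{-1/2}\EE\big(\nmeanreact_{-7}(T)+\nmeanreact_{-7'}(T)\big)+\EE\,\nmeanreact_8(T).
\]
This is the paper's order: (ii) gives $\nmeanreact_8$ in (iii), which gives $\nmeanreact_{-8}$ in (iv), which gives (v). The order matters because $\nmeanreact_{-8}$ has no useful direct intensity bound: its intensity carries the factor $\nZ_{A_3}$, which is only $O(n)$.

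\textbf{A caveat you correctly flagged.} The $L^2$ claims in (ii) require second moments of $\nZ_{A_1}(0)$, $\nZ_{A_2}(0)$ and $n^{-1}\nZ_{A_3}(0)$. Part \ref{item:mmt-bd-initial} of Assumption~\ref{assum:main} does not provide these. The paper's proof squares these terms without comment, so a strengthened assumption is needed there in either approach.
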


\begin{proof}
\ref{item:react-tight-1} We show tightness of $\nmeanreact_2$ and $\{n^{-1} \nR_2\}$. The tightness of other processes follow similarly. Since $\sup_{s\leq T}\nZ_{E_1A_1}(s) \leq \nconsconst_1$, and $\sup_n \EE\lf[(\nconsconst_1)^2\ri] < \infty$, it follows that 
$$\sup_n \EE\lf[(\nmeanreact_2(T))^2\ri] \leq \sup_n \EE\lf[(\nconsconst_1)^2\ri] \kappa_2^2 T^2 <\infty.$$
In particular, $\{\sup_{t\leq T}\nmeanreact_2(t) \equiv \nmeanreact_2(T)  \}$ is tight, and  the tightness of the process $\{ \nmeanreact_2\}$ in $C([0,T], \R)$  follows from \Cref{lem:int-eq-tight}. Next write (c.f. \eqref{eq:react-split})
\begin{align*}
n^{-1}\nR_2(t) = n^{-1} \tnR_2(t) + \nmeanreact_2(t)
\end{align*}
where recall that $\tnR_2$, defined by $\tnR_2(t) =\nR_2(t) -n\nmeanreact_2(t) = \int_{[0,\infty)\times[0,t]} \indic_{[0,   n \kappa_2 Z_{E_1A_1}^N(s)]}(u) \tilde\xi_2(du\times ds) $, is a zero-mean martingale. The $C$-tightness of $\{n^{-1}\nR_2\}$ will follow once 
we show that $ n^{-1} \tnR_2 \prt 0$ in $D([0,T],\R)$. In fact we show the following stronger statement: as $n \rt \infty$, $ n^{-1} \sup_{t\leq T}|\tnR_2(t)| \rt 0$ in $L^2(\Om, \PP)$.
To this end, observe that $\<n^{-1}\tnR_2\>$, the predictable quadratic variation of $n^{-1}\tnR_2$, satisfies
 $$\<n^{-1}\tnR_2\>_t = n^{-2} \int_0^t n \kappa_2 \nZ_{E_1A_1}(s)\,ds  \leq n^{-1} \nconsconst_1 T.$$
 By Burkholder-Davis-Gundy (BDG) inequality and the assumption on $\{\nconsconst_1\}$, it follows that $\<n^{-1}\tnR_2\>_T  \rt 0$ in $L^2(\Om, \PP)$  as $n\rt \infty$, 
   \begin{align}
            \label{eq:conv-pred-quadvar-R2}
  \EE\lf[\sup_{t\leq T}|  n^{-1}\tnR_2(t)|^2\ri] \leq \cnst_2 \EE\<n^{-1}\tnR_2\>_T \nrt 0.
   \end{align} 
   where $\cnst_2$ is the Burkholder's constant for exponent 2.

%\begin{align}\label{eq:mart-conv-R2}
%n^{-1} \sup_{t\leq T}|\tnR_2(t)| \prt 0.
%\end{align}
%To this end, observe that $\<n^{-1}\tnR_2\>$, the predictable quadratic variation of $n^{-1}\tnR_2$, satisfies
% $$\<n^{-1}\tnR_2\>_t = n^{-2} \int_0^t n \kappa_2 \nZ_{E_1A_1}(s)\,ds  \leq n^{-1} \nconsconst_1 T.$$
%By the assumption on $\{\nconsconst_1\}$, it follows that $\<n^{-1}\tnR_2\>_T  \rt 0$ in $L^2(\Om, \PP)$  as $n\rt \infty$, 
%        \begin{align}
%            \label{eq:conv-pred-quadvar-R2}
%            \<n^{-1}\tnR_2\>_T \prt 0.
%        \end{align} 
%  {\crd This can be covered by the general theorem}  Next for any positive $\vep, \eta$, the  Lenglart--Rebolledo inequality (see \cite[Lemma 3.7]{Whitt2007MCLT}, and \cite[Remark 4.17]{karatzas1991brownian}) gives us 
%        \begin{align}
%            \PP\lf[\sup_{t\leq T}|n^{-1}\tnR_2(t)|>\eta\ri]\leq \vep+\PP\lf[\<n^{-1}\tnR_2\>_T > \vep\eta^2\ri]. 
%            \label{eq:Lenglart-Rebolledo}
%        \end{align}
%        Because of the convergence in \eqref{eq:conv-pred-quadvar-R2}, letting $n\rt \infty$ in the \eqref{eq:Lenglart-Rebolledo} we get 
%        \begin{align*}
%              \lim_{n\rt \infty}   \PP\lf[\sup_{t\leq T}|n^{-1}\tnR_2(t)|>\eta\ri]\leq \vep,
%        \end{align*}
%        which establishes \eqref{eq:mart-conv-R2}. 
%    {\crd Shall we spin the tightness of $n^{-1}\nR_2$ as a general theorem?}
 
 \np
\ref{item:A1-A2-tight}     Notice that   
\begin{align} \label{eq:A1-ineq}
	0\leq \nZ_{A_1}(t) & \leq \nZ_{A_1}(0) + n^{-1}\left[ \nR_0(t)  + \nR_{-1}(t) + \nR_{-3}(t)\right],
\end{align}   
and
\begin{align} \label{eq:A2-A3-ineq}
\begin{aligned}
	\nZ_{A_2}(t)+2n^{-1}\nZ_{A_3}(t) =&\ \nZ_{A_2}(0)+2n^{-1}\nZ_{A_3}(0) +n^{-1}\Big[ \nR_2(t) + \nR_4(t) - \nR_5(t) + \nR_{-5}(t) \\ 
	&\hs{.2cm} - 2\nR_8(t)+ 2\nR_{-8}(t)\Big] +2n^{-1}\Big[- \nR_7(t) + \nR_{-7}(t) - \nR_{7'}(t) + \nR_{-7'}(t) \\
	&\hs{.2cm} + \nR_8(t) - \nR_{-8}(t)\Big]\\
	=&\ \nZ_{A_2}(0)+2n^{-1}\nZ_{A_3}(0) +n^{-1}\Big[ \nR_2(t) + \nR_4(t) - \nR_5(t) + \nR_{-5}(t) \\
	& \hs{.2cm} 2\big(- \nR_7(t) + \nR_{-7}(t) - \nR_{7'}(t) + \nR_{-7'}(t) \big)\Big]\\
	\leq &\ \nZ_{A_2}(0)+2n^{-1}\nZ_{A_3}(0) +n^{-1}\Big[ \nR_2(t) + \nR_4(t) +\nR_{-5}(t)\\
	&\hs{.2cm} +2\big( \nR_{-7}(t)  + \nR_{-7'}(t)\big)\Big].
\end{aligned} 
\end{align}  
The assertion now follows by squaring both sides in \eqref{eq:A1-ineq} and \eqref{eq:A2-A3-ineq}, taking expectations, noting that $\sup_{t\le T}\nR_k(t)=\nR_k(T)$ (since $\nR_k$ is nondecreasing), and then applying \eqref{eq:exp-react-sq} together with \ref{item:react-tight-1}.

%Again, as a special case of \ref{item:react-tight-1}, for $k \in \{2, 4, -5, -7, -7'\}$, the sequences $\{n^{-1} \nR_k(T)\}$ are tight in $[0,\infty]$, which proves that  $\{\sup_{t\leq T} \nZ_{A_2}(t)\}$ is tight.

\np
\ref{item:react-tight-2} The proof follows from \ref{item:react-tight-1}, \ref{item:A1-A2-tight} and \Cref{lem:int-eq-tight}.

\np 
\ref{item:react-tight-7-7'} Notice that 
\begin{align}\label{eq:react7+7'}
	\begin{aligned}
		n^{-1/2}(\nR_{7}(t)+\nR_{7'}(t)) =&\ n^{-1/2}\lf(\nZ_{E_1}(0)+ \nZ_{E_1A_1}(0) -\nZ_{E_1}(t)- \nZ_{E_1A_1}(t)\ri)\\
		& \hs{.2cm}+n^{-1/2}(\nR_{-7}(t)+\nR_{-7'}(t)).
	\end{aligned}
\end{align}
Now since $\nconsconst_1$ is tight,
\begin{align*}
	\sup_{t\leq T}n^{-1/2}\lf|\nZ_{E_1}(0)+ \nZ_{E_1A_1}(0) -\nZ_{E_1}(t)- \nZ_{E_1A_1}(t)\ri| \leq 2 n^{-1/2}\nconsconst_1 \prt 0
\end{align*}
as $n \rt \infty$. Since we already proved $n^{-1/2}\nR_{-7}$ and $n^{-1/2}\nR_{-7'}$ are $C$-tight, it follows that $n^{-1/2}(\nR_{7}+\nR_{7'})$ is $C$-tight in $D([0,T], [0,\infty))$.

Now squaring both sides of \eqref{eq:react7+7'}, taking expectations and using \eqref{eq:exp-react-sq}, we have
\begin{align}\label{eq:mean-react7+7'}
	\begin{aligned}
		\EE\Big[\Big(\nmeanreact_{7}(T)\Big)^2+\Big(\nmeanreact_{7'}(T)\Big)^2\Big] \leq &\ 4\Big(n^{-1}\EE\Big[(\nconsconst_1)^2\Big]
		+\EE\Big[\Big(\nmeanreact_{-7}(T)\Big)^2+n^{-1/2}\nmeanreact_{-7}(T)\Big]\\
		&\ +  \EE\Big[\Big(\nmeanreact_{-7'}(T)\Big)^2+n^{-1/2}\nmeanreact_{-7'}(T)\Big]\Big), % \leq 2n^{-1/2}\EE(\nconsconst_1) + t\EE(\nconsconst_1),
	\end{aligned}
\end{align}
which establishes the first inequality in \eqref{eq:mmt-react-bd-7-7'-8}, because of \ref{item:react-tight-1} and \Cref{assum:main}-\ref{item:mmt-bd-initial}. 
Next, notice that from the equation of $\nZ_{A_3}$ in \eqref{eq:full_gly_scaled_eq},
\begin{align*}
\nR_{-8}(t) \leq \nZ_{A_3}(0)+ \nR_{-7}(t) +\nR_{-7'}(t) + \nR_8(t).
\end{align*}
Taking expectation and dividing by $n$ we get
\begin{align*}
\EE(\nmeanreact_{-8}(T)) \leq &\  n^{-1}\EE(\nZ_{A_3}(0))+n^{-1/2} \EE(\nmeanreact_{-7}(T)) + n^{-1/2} \EE(\nmeanreact_{-7'}(T))+ \EE(\nmeanreact_8(T)).
\end{align*}
It now follows from \ref{item:react-tight-1}, \ref{item:react-tight-2} and \Cref{assum:main}-\ref{item:mmt-bd-initial} that
the second inequality in \eqref{eq:mmt-react-bd-7-7'-8} holds.

\np
\ref{item:conv-A4} 
%From the equation of $\nZ_{A_4}$ we get for $p \geq 1$
%\begin{align*}
%|\nZ_{A_4}(t) - \nZ_{A_4}(0)|^p \leq & \ 4^{p-1}\lf[n^{-2p}\lf((\tnR_8(t))^p +(\tnR_{-8}(t))^p\ri) + n^{-p}\lf(\nmeanreact_{8}(t))^p+(\nmeanreact_{-8}(t))^p\ri)\ri]
%\end{align*}
%It now follows by BDG inequality
%\begin{align*}
%\EE\lf(\sup_{t\leq T}|\nZ_{A_4}(t) - \nZ_{A_4}(0)|^p\ri) \leq & \ 4^{p-1}\Big[n^{-2p+p/2}\lf(\EE(\nmeanreact_8(t))^{p/2} +\EE(\nmeanreact_{-8}(t))^{p/2}\ri) \\
%& \hs{.2cm}+ n^{-p}\lf(\EE(\nmeanreact_{8}(t))^p+\EE(\nmeanreact_{-8}(t))^p\ri)\Big]\\
%\nrt &\ 0.
%\end{align*} 
%
From the equation of $\nZ_{A_4}$ in \eqref{eq:full_gly_scaled_eq}, and using \ref{item:react-tight-2} and \ref{item:react-tight-7-7'}, we obtain
\begin{align*}
\EE\lf(\sup_{t\leq T}|\nZ_{A_4}(t) - \nZ_{A_4}(0)|\ri) \leq  n^{-1} \EE\lf(\nmeanreact_{8}(T)+\nmeanreact_{-8}(T)\ri) \nrt 0.
\end{align*}
The second convergence is immediate from \Cref{assum:main}:\ref{item:initial-other}.
%{\grn In (v), we need to show for $\EE\lf(\sup_{t\leq T}|\nZ_{A_4}(t) - \nZ_{A_4}(0)|^p\ri) \nrt 0$ for $p>2$?}

\end{proof}

\begin{lemma}\label{lem:A3-int-tight}
Under  \Cref{assum:main}, the sequence of random variables $\dst \lf\{\int_0^T \nZ_{A_3}(s)\ ds\ri\}$ is tight.
\end{lemma}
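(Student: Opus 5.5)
The plan is to deduce the claim from the bound $\sup_n\EE[\nmeanreact_{-8}(T)]<\infty$ established in \Cref{lem:aux-tight}-\ref{item:react-tight-7-7'}, together with the fact that $\nZ_{A_4}$ stays uniformly bounded away from zero with high probability. By \eqref{eq:mean_scaled_reaction-0} and \eqref{eq:scaled-prop}, $\nmeanreact_{-8}(T)=\kappa_{-8}\int_0^T \nZ_{A_3}(s)\nZ_{A_4}(s)\,ds$. Hence $\int_0^T\nZ_{A_3}(s)\,ds$ is controlled by $\nmeanreact_{-8}(T)$ whenever $\inf_{s\le T}\nZ_{A_4}(s)$ is bounded below.

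\emph{Step 1 (good event).} Define the event
\[
G_n=\Big\{\inf_{s\le T}\nZ_{A_4}(s)\ge \delta_0/2\Big\}.
\]
By \Cref{lem:aux-tight}-\ref{item:conv-A4}, $\sup_{t\le T}|\nZ_{A_4}(t)-Z_{A_4}(0)|\prt 0$. By \Cref{assum:main}-\ref{item:initial-other}, $Z_{A_4}(0)>\delta_0$ almost surely. On $G_n^c$ we have $\sup_{t\le T}|\nZ_{A_4}(t)-Z_{A_4}(0)|>\delta_0/2$, and therefore $\PP(G_n^c)\to0$ as $n\to\infty$.

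\emph{Step 2 (Markov bound).} On $G_n$ we have
\[
\int_0^T\nZ_{A_3}(s)\,ds\le \frac{2}{\kappa_{-8}\delta_0}\,\nmeanreact_{-8}(T).
\]
Consequently, for every $K>0$,
\[
\PP\Big(\int_0^T\nZ_{A_3}(s)\,ds>K\Big)\le \PP(G_n^c)+\frac{2\sup_m\EE[\nmeanreact_{-8}(T)]}{\kappa_{-8}\delta_0 K}.
\]

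\emph{Step 3 (conclusion).} Fix $\vep>0$. By Step 1, choose $n_0$ such that $\PP(G_n^c)<\vep/2$ for all $n\ge n_0$. Then choose $K$ large enough that the second term in Step 2 is below $\vep/2$. This handles all $n\ge n_0$. For each of the finitely many $n<n_0$, the random variable $\int_0^T\nZ_{A_3}\,ds$ is almost surely finite, since \Cref{lem:aux-tight}-\ref{item:A1-A2-tight} gives $\sup_{t\le T}\nZ_{A_3}(t)<\infty$ a.s. Enlarging $K$ to cover these finitely many indices gives tightness.

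The only delicate point is Step 1. One cannot hope for a deterministic lower bound on $\nZ_{A_4}$, so the argument must go through convergence in probability. This is exactly why the strict positivity $Z_{A_4}(0)>\delta_0$ is imposed in \Cref{assum:main}. Everything else reduces to the moment bound on $\nmeanreact_{-8}(T)$ already proven in \Cref{lem:aux-tight}.
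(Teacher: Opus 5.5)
Your proof is correct and follows the paper's argument. The paper likewise bounds $\int_0^T \nZ_{A_3}(s)\,ds$ by $\frac{2}{\delta_0}\int_0^T \nZ_{A_3}(s)\nZ_{A_4}(s)\,ds$ on an event where $\nZ_{A_4}$ stays within $\delta_0/2$ of $Z_{A_4}(0)$, and then controls that integral by Markov's inequality using $\sup_n\EE[\nmeanreact_{-8}(T)]<\infty$ from \Cref{lem:aux-tight}; the only cosmetic differences are that the paper splits the good event into two pieces and treats only $n\ge N_0$, whereas you merge them via the second statement of \Cref{lem:aux-tight}-\ref{item:conv-A4} and explicitly handle the finitely many small $n$.
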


\begin{proof}
To prove the assertion, we need to show that for any $\ep>0$, there exists a $K_1\equiv K_1(\vep)$ and $N_0 = N_0(\vep)$  such that for all $n\geq N_0(\vep)$
\begin{align} \label{eq:tight-int-A3}
\SC{P}^n(K_1(\ep)) \dfeq  	\PP\lf(\int_0^T  \nZ_{A_3}(s) ds \geq K_1(\vep)\ri) \leq \ep.
\end{align}
Since 
$\sup_n\EE\lf(\int_0^T \nZ_{A_3}(s)\nZ_{A_4}(s)\ ds \ri) < \infty$ by  \eqref{eq:mmt-react-bd-7-7'-8},
% the sequence $\lf\{\int_0^T \nZ_{A_3}(s)\nZ_{A_4}(s)\ ds\ri\}$ is tight. 
by Markov inequality, choose $K_1(\ep)$ such that
\begin{align}\label{eq:K1-choice}
\PP \lf( \f{2}{\delta_0}\int_0^T  \nZ_{A_3}(s)\nZ_{A_4}(s)\ ds \geq K_1(\ep)\ri)  \leq \ep/3,
\end{align}
where $\delta_0$ is as in \Cref{assum:main}-\ref{item:initial-other}. 
 Now by \Cref{lem:aux-tight}:\ref{item:conv-A4} and the assumption $\nZ_{A_4}(0) \prt Z_{A_4}(0)$, let $N_0$ be such that for all $n \geq N_0$
\begin{align*}
	\PP\lf(  \sup_{s\leq T}|\nZ_{A_4}(s) - \nZ_{A_4}(0)| > \delta_0/4\ri) \vee \PP\lf( |\nZ_{A_4}(0) - Z_{A_4}(0)| > \delta_0/4\ri) \leq \ep/3.
\end{align*}
Therefore, for all $n \geq N_0$,
\begin{align*}
& \SC{P}^n(K_1(\vep)) \leq\ \PP\lf(\int_0^T  \nZ_{A_3}(s) ds \geq K_1(\ep),\ \sup_{s\leq T}|\nZ_{A_4}(s) - \nZ_{A_4}(0)| \leq \delta_0/4\ri)+\ep/3\\
&\leq	\PP\lf(\int_0^T  \nZ_{A_3}(s) ds \geq K_1(\ep),\  \sup_{s\leq T}|\nZ_{A_4}(s) - \nZ_{A_4}(0)| \leq \delta_0/4,  |\nZ_{A_4}(0) - Z_{A_4}(0)| \leq \delta_0/4\ri)+2\ep/3
\end{align*}
Now writing $\int_0^T  \nZ_{A_3}(s) ds = \int_0^T \nZ_{A_3}(s)\nZ_{A_4}(s) /\nZ_{A_4}(s) \ ds$, we see that  on the event 
$$\lf\{ \sup_{s\leq T}|\nZ_{A_4}(s) - \nZ_{A_4}(0)| \leq \delta_0/4, \  |\nZ_{A_4}(0) - Z_{A_4}(0)| \leq \delta_0/4\ri\}$$
we have
\begin{align*}
	\int_0^T  \nZ_{A_3}(s) ds \ \leq\ \f{1}{Z_{A_4}(0)-\delta_0/2}\int_0^T  \nZ_{A_3}(s)\nZ_{A_4}(s)\ ds \leq\ \f{2}{\delta_0}\int_0^T  \nZ_{A_3}(s)\nZ_{A_4}(s)\ ds,
\end{align*}
where for the second inequality we have used the assumption that $Z_{A_4}(0) \geq \delta_0$ a.s.
Consequently, because of  the choice of $K_1(\ep)$ in \eqref{eq:K1-choice}, we have for all $n \geq N_0$,
\begin{align*}
\SC{P}^n(K_1(\ep)) \leq \ \PP \lf( \f{2}{\delta_0}\int_0^T  \nZ_{A_3}(s)\nZ_{A_4}(s)\ ds \geq K_1(\ep)\ri)+2\ep/3 \leq \ep.
%\leq&\ \EE\lf(\int_0^T  \nZ_{A_3}(s)\nZ_{A_4}(s)\ ds\ri) \Big/ (Z_{A_4}(0)-\delta) K_1(\ep)+2\ep/3 \leq \ep.
\end{align*}

\end{proof}

\begin{proposition} \label{prop:A3-tight-prop}
Under \Cref{assum:main}, the following hold:
\begin{enumerate}[label=(\roman*), ref=(\roman*)]
%\item \label{item:A3-int-tight} The sequence of random variables $\dst \lf\{\int_0^T \nZ_{A_3}(s)\ ds\ri\}$ is tight.
\item \label{item:A3-conv} the sequence of random variables $\lf\{n^{-1}\sup_{t\leq T}\big(\nZ_{A_3}(t)\big)^{\aexp}\ri\}$ is tight (where $\aexp$ is as in \Cref{assum:main}: \ref{item:initial-other})
\item \label{item:A3-pwr-int-tight}  the sequence of random variables $\dst \lf\{\int_0^T (\nZ_{A_3}(s))^{\aexp}\ ds\ri\}$ is tight.
\end{enumerate}
\end{proposition}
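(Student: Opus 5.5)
The plan is to apply It\^o's formula (for jump SDEs driven by PRMs) to $\phi(z)=z^{\aexp}$ evaluated at $\nZ_{A_3}$, divide by $n$, and exploit the strong degradation term coming from reaction $-8$. Since $\nZ_{A_3}=\nX_{A_3}$ changes by $\pm1$ only, the jump increments satisfy
\[
\phi(z\pm1)-\phi(z)=\pm\aexp z^{\aexp-1}+O\big(z^{\aexp-2}+1\big).
\]
Collecting compensators from \eqref{eq:react-process-1}, the $O(n)$ drift consists of three pieces:
\begin{itemize}
\item a source $n\aexp\kappa_8 (\nZ_{A_2})^2(\nZ_{A_3})^{\aexp-1}$;
\item a sink $-n\aexp\kappa_{-8}\nZ_{A_4}(\nZ_{A_3})^{\aexp}$;
\item second-order terms of size $n\big((\nZ_{A_2})^2+\nZ_{A_3}\nZ_{A_4}\big)O\big((\nZ_{A_3})^{\aexp-2}+1\big)$.
\end{itemize}
The $O(n^{1/2})$ drift from reactions $\pm7,\pm7'$ has a nonpositive part $-\kappa_7(\nZ_{E_1}+\nZ_{E_1A_1})\nZ_{A_3}\cdot\aexp(\nZ_{A_3})^{\aexp-1}$, which we drop. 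The remaining part is bounded by $C\nconsconst_1 n^{1/2}\big((\nZ_{A_3})^{\aexp-1}+1\big)$, and after division by $n$ it carries a factor $n^{-1/2}$. Dividing by $n$ we expect
\[
\frac{(\nZ_{A_3}(t))^{\aexp}}{n}+\aexp\kappa_{-8}\int_0^t\nZ_{A_4}(\nZ_{A_3})^{\aexp}ds\le \frac{(\nZ_{A_3}(0))^{\aexp}}{n}+\int_0^t G^n(s)\,ds+n^{-1}M^n(t),
\]
where $M^n$ is a local martingale. The integrand $G^n$ is a sum of terms of the form $(\nZ_{A_2})^2(\nZ_{A_3})^{\aexp-1}$, $\nZ_{A_4}(\nZ_{A_3})^{\aexp-1}$, lower powers, and $n^{-1/2}\nconsconst_1(\nZ_{A_3})^{\aexp-1}$.

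Next I would localize. Fix $K$ and define the stopping time $\tau^n_K$ as the first time $\nZ_{A_2}>K$ or $|\nZ_{A_4}-\nZ_{A_4}(0)|>\delta_0/4$. I would also restrict to the $\mathcal F_0$-measurable event
\[
\Big\{n^{-1}(\nZ_{A_3}(0))^{\aexp}\le K,\ \nconsconst_1\le K,\ |\nZ_{A_4}(0)-Z_{A_4}(0)|\le\delta_0/4,\ \nZ_{A_4}(0)\le K\Big\}.
\]
By \Cref{assum:main}, \Cref{lem:aux-tight}\ref{item:A1-A2-tight},\ref{item:conv-A4}, these events have probability at least $1-\vep$ uniformly in $n$ for $K$ large, and on them $\nZ_{A_4}\in[\delta_0/2,2K]$ up to $\tau^n_K$. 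On this set I would apply Young's inequality, e.g. $K^2z^{\aexp-1}\le \eta z^{\aexp}+C_\eta K^{2\aexp}$, and similarly for the lower-order powers. Choosing $\eta$ small relative to $\aexp\kappa_{-8}\delta_0/2$ lets every term of $G^n$ be absorbed into half of the sink integral, leaving a remainder $C(K)T$.

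The main obstacle is controlling $\sup_{t\le T}n^{-1}|M^n(t\wedge\tau^n_K)|$, because its quadratic variation is self-referential. We have
\[
\langle n^{-1}M^n\rangle_T\lesssim n^{-1}\int_0^T\big((\nZ_{A_2})^2+\nZ_{A_3}\nZ_{A_4}\big)(\nZ_{A_3})^{2\aexp-2}ds,
\]
plus smaller $n^{-3/2}$ terms from reactions $\pm7,\pm7'$. I would bound this by
\[
\Big(\sup_{s\le T}n^{-1}(\nZ_{A_3}(s))^{\aexp}\Big)\int_0^T\big(K^2(\nZ_{A_3})^{\aexp-2}+\nZ_{A_4}(\nZ_{A_3})^{\aexp-1}\big)ds .
\]
Then BDG together with $\sqrt{ab}\le \tfrac14 a+b$ gives
\[
\EE\sup|n^{-1}M^n|\le \tfrac14\EE\sup n^{-1}(\nZ_{A_3})^{\aexp}+C\,\EE\int(\cdots)ds .
\]
The last integral is again absorbed by the sink via Young's inequality. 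To make the expectations finite, I would additionally stop at the first time $n^{-1}(\nZ_{A_3})^{\aexp}$ exceeds a large level $L$; the bounds are uniform in $L$, so $L$ can then be sent to infinity. Taking the supremum over $t$ and expectations (with the initial event as an indicator) would then give
\[
\EE\Big[\sup_{t\le T\wedge\tau}n^{-1}(\nZ_{A_3})^{\aexp}+\int_0^{T\wedge\tau}(\nZ_{A_3})^{\aexp}ds;\ \text{good event}\Big]\le C(K),
\]
uniformly in $n$, where on the good event we have used $\nZ_{A_4}\ge\delta_0/2$ to pass from the weighted integral $\int_0^{T\wedge\tau}\nZ_{A_4}(\nZ_{A_3})^{\aexp}ds$ to the unweighted one. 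Markov's inequality, together with the bound $\PP(\tau^n_K<T)\le\vep$, then yields both \ref{item:A3-conv} and \ref{item:A3-pwr-int-tight}.
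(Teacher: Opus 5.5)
Your argument is correct, but it takes a genuinely different route from the paper.

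The paper argues by induction on an (assumed integer) exponent:
\begin{itemize}
\item The base case $p'=1$ comes from Lemma~\ref{lem:aux-tight}\ref{item:A1-A2-tight} and Lemma~\ref{lem:A3-int-tight}.
\item For (i), it simply discards the dissipative part $-p z^{p-1}$ of every downward jump. All remaining compensators and predictable quadratic variations are then controlled by $\sup_{t\le T}(Z^{(n)}_{A_2}(t))^2$, $\int_0^T(1+Z^{(n)}_{A_3}(s))^{p-1}ds$ and $n^{-1}\sup_{t\le T}(1+Z^{(n)}_{A_3}(t))^{p-1}$, which are tight by the induction hypothesis. The martingales are handled in probability by Lenglart's inequality (Lemma~\ref{lem:sup-mart-tight}).
\item The sink of reaction $-8$ enters only afterwards, for (ii). The paper isolates $n^{-1}p\int (Z^{(n)}_{A_3})^{p-1}\,dR^{(n)}_{-8}$ and then divides by $Z^{(n)}_{A_4}\ge\delta_0/2$, as in Lemma~\ref{lem:A3-int-tight}.
\end{itemize}

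You instead keep the sink $-p\kappa_{-8}Z^{(n)}_{A_4}(Z^{(n)}_{A_3})^{p}$ from the outset. On a localized event it absorbs, via Young's inequality, all sources of order $(Z^{(n)}_{A_3})^{p-1}$ at the same level $p$. You close the self-referential martingale term with BDG and $\sqrt{ab}\le a/4+b$, and the extra stopping at level $L$ makes the quantities you subtract finite.

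Your route buys the following:
\begin{itemize}
\item a single-step, quantitative moment bound on the good event, uniform in $n$, giving (i) and (ii) simultaneously;
\item no induction and no separate base-case lemma;
\item it works verbatim for non-integer $p>2$, since $(z\pm1)^p-z^p=\pm p z^{p-1}+O(1+z^{p-2})$ holds for real $p\ge 2$. The paper's ``without loss of generality $p$ is an integer'' needs an extra word when $2<p<3$.
\end{itemize}
The paper's route, in exchange, never needs finite expectations, stopping-time bookkeeping or absorption arguments. Each step is a crude domination followed by Lenglart.

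Two small repairs are needed.
\begin{itemize}
\item The event $\{|Z^{(n)}_{A_4}(0)-Z_{A_4}(0)|\le\delta_0/4\}$ need not be $\mathcal{F}_0$-measurable, because the limit $Z_{A_4}(0)$ is in general not adapted to the chain's filtration. You nevertheless multiply the martingale by its indicator before applying BDG. Replace it by $\{Z^{(n)}_{A_4}(0)\ge 3\delta_0/4\}$, which is all you actually use.
\item Its probability exceeds $1-\varepsilon$ only for $n\ge N_0(\varepsilon)$, not uniformly in $n$. This is still enough for tightness.
\end{itemize}
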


\begin{remark} \label{rem:A3-conv-0}
\Cref{prop:A3-tight-prop}:\ref{item:A3-conv} implies that for any $\aexp_1<\aexp$, $n^{-1}\sup_{t\leq T}\big(\nZ_{A_3}(t)\big)^{\aexp_1} \prt 0$ as $n\rt \infty.$
\end{remark}

\begin{proof}[Proof of \Cref{prop:A3-tight-prop}] Notice that without loss of generality we can assume $\aexp$ to be positive integer valued. We will prove \ref{item:A3-conv}   and \ref{item:A3-pwr-int-tight}  by induction, that is, assuming that $\lf\{n^{-1}\sup_{t\leq T}\big(\nZ_{A_3}(t)\big)^{\aexp'}\ri\}$ and $ \lf\{\int_0^T (\nZ_{A_3}(s))^{\aexp'}\ ds\ri\}$ are tight  for any positive integer $\aexp' < \aexp$, we will show that   \ref{item:A3-conv}   and \ref{item:A3-pwr-int-tight} hold. Observe that the case of  $\aexp'=1$ follows from \Cref{lem:aux-tight}:\ref{item:A1-A2-tight} and \Cref{lem:A3-int-tight}. 
%Now assume that \ref{item:A3-conv}   and \ref{item:A3-pwr-int-tight} hold for all positive integers $\aexp' < \aexp$.

By It\^o's lemma,
\begin{align} \label{eq:A3-sq}
	(\nZ_{A_3}(t))^p=& (\nZ_{A_3}(0))^p + \hat{R}^{(n,\aexp)}_{-7}(t) +  \hat{R}^{(n,\aexp)}_{-7'}(t) + \hat{R}^{(n,\aexp)}_{8}(t) + \hat{R}^{(n,\aexp)}_{7}(t)      	+  \hat{R}^{(n,\aexp)}_{7'}(t) +  \hat{R}^{(n,\aexp)}_{-8}(t),
\end{align}	
where
\begin{align*}
 \hat{R}^{(n,\aexp)}_{-7}(t)  =&\ \int_{[0,\infty)\times [0,t]}\lf((\nZ_{A_3}(s-)+1)^{\aexp}-(\nZ_{A_3}(s-))^{\aexp}\ri) \indic_{[0,n^{1/2} \kappa_{-7} \nZ_{E_1^*}(s-)]}(u) \xi_{-7}(du\times ds)\\
 \hat{R}^{(n,\aexp)}_{-7'}(t)  =&\ \int_{[0,\infty)\times [0,t]}\lf((\nZ_{A_3}(s-)+1)^{\aexp}-(\nZ_{A_3}(s-))^{\aexp}\ri) \indic_{[0,  n^{1/2} \kappa_{-7} \nZ_{E_1^*A_1}(s-)]}(u) \xi_{-7'}(du\times ds)\\
 \hat{R}^{(n,\aexp)}_{8}(t)  =&\ \int_{[0,\infty)\times [0,t]}\lf((\nZ_{A_3}(s-)+1)^{\aexp}-(\nZ_{A_3}(s-))^{\aexp}\ri) \indic_{[0,  n \kappa_8 \nZ_{A_2}(s-)(\nZ_{A_2}(s-)-n^{-1})]}(u) \xi_{8}(du\times ds) \\
 \hat{R}^{(n,\aexp)}_{7}(t)  =&\ \int_{[0,\infty)\times [0,t]}\lf((\nZ_{A_3}(s-)-1)^{\aexp}-(\nZ_{A_3}(s-))^{\aexp}\ri) \indic_{[0, n^{1/2} \kappa_7 \nZ_{E_1}(s-)\nZ_{A_3}(s-)]}(u) \xi_{7}(du\times ds)\\
 \hat{R}^{(n,\aexp)}_{7'}(t)  =&\ \int_{[0,\infty)\times [0,t]}\lf((\nZ_{A_3}(s-)-1)^{\aexp}-(\nZ_{A_3}(s-))^{\aexp}\ri) \indic_{[0,  n^{1/2} \kappa_7 \nZ_{E_1A_1}(s-)\nZ_{A_3}(s-)]}(u) \xi_{7'}(du\times ds) \\
 \hat{R}^{(n,\aexp)}_{-8}(t)  =&\ \int_{[0,\infty)\times [0,t]}\lf((\nZ_{A_3}(s-)-1)^{\aexp}-(\nZ_{A_3}(s-))^{\aexp}\ri) \indic_{[0,  n \kappa_{-8} \nZ_{A_3}(s-)\nZ_{A_4}(s-)]}(u) \xi_{-8}(du\times ds).
\end{align*}
Binomial expansion shows that for $y \geq 0$
\begin{align*}
(y+1)^{\aexp}-y^{\aexp} = g^{(\aexp-1)}(y), \quad (y-1)^{\aexp}-(y)^{\aexp} \leq  -\aexp y^{\aexp-1}+\tilde g^{(\aexp-2)}(y) \leq \tilde g^{(\aexp-2)}(y) ,
\end{align*}
where $g^{(\aexp-1)}(\cdot)$ and $ \tilde g^{(\aexp-2)}(\cdot)$ are nonnegative polynomials of degree $\aexp-1$ and $\aexp-2$, respectively. Thus, there is a constant $\const_{\aexp}$ such that
\begin{align}
0\leq g^{(\aexp-1)}(y) \leq \const_{\aexp}(1+y^{\aexp-1}), \quad 0 \leq \tilde g^{(\aexp-2)}(y) \leq \const_{\aexp}(1+y^{\aexp-2}), \quad y \geq 0
\end{align}
Since the $\xi_k$ are non-negative measures, we have
\begin{align*}
\sup_{t\leq T}(\nZ_{A_3}(t))^p \leq&\ (\nZ_{A_3}(0))^{\aexp} + G^{(n,p-1)}_{-7}(T) + G^{(n,p-1)}_{-7'}(T) + G^{(n,p-1)}_{8}(T) \\
&\ +\tilde G^{(n,p-2)}_{7}(T) + \tilde G^{(n,p-2)}_{7'}(T) +\tilde G^{(n,p-2)}_{-8}(T),
%\leq &\ (\nZ_{A_3}(0))^2 + G^{(n,p-1)}_{-7}(T) + G^{(n,p-1)}_{-7'}(T) + G^{(n,p-1)}_{8}(T) \\
%&\ +|\tilde G^{(n,p-2)}_{7}(T)| + |\tilde G^{(n,p-2)}_{7'}(T)| +|\tilde G^{(n,p-2)}_{-8}(T)|,
\end{align*} 
where %for ${\grn k=\pm7,\pm7',\pm8}$
\begin{align*}
G^{(n,p-1)}_{k}(t) =&
     \begin{cases}
      \int_{[0,\infty)\times [0,t]} g^{(\aexp-1)}(\nZ_{A_3}(s-)) \indic_{[0, n^{1/2} \l_k(\nZ(s-))]}(u) \xi_k(du\times ds), & \quad k=- 7, - 7',\\
       \int_{[0,\infty)\times [0,t]} g^{(\aexp-1)}(\nZ_{A_3}(s-)) \indic_{[0, n \l_k(\nZ(s-))]}(u) \xi_k(du\times ds), & \quad k =8,
      \end{cases}\\
\tilde G^{(n,p-2)}_{k}(t) =&
\begin{cases}
  \int_{[0,\infty)\times [0,t]} \tilde g^{(\aexp-2)}(\nZ_{A_3}(s-)) \indic_{[0, n^{1/2} \l_k(\nZ(s-))]}(u) \xi_k(du\times ds), & \quad k= 7,  7',\\
   \int_{[0,\infty)\times [0,t]} \tilde g^{(\aexp-2)}(\nZ_{A_3}(s-)) \indic_{[0, n \l_k(\nZ(s-))]}(u) \xi_k(du\times ds), & \quad k=-8.
  \end{cases}
  \end{align*}

The assertion  \ref{item:A3-conv} for exponent $\aexp$ follows once we show  for $k= -7, - 7', 8$, the sequences $\{n^{-1}G^{(n,p-1)}_{k}(T)\}$ are tight and for $k=7,7', -8$, the sequences$\{n^{-1}\tilde G^{(n,p-2)}_{k}(T)\}$ are tight. We will show this for the case of $k=8$; that is, we show $\{n^{-1}G^{(n,p-1)}_{8}(T)\}$ is tight. The proofs for the others follow similarly and are actually simpler. To this end, write
\begin{align*}
G^{(n,p-1)}_{8}(t) =&  \gamma^{(n,p-1)}_8(t)+  \mart^{(n,p-1)}_8(t),
\end{align*}
where
\begin{align*}
\gamma^{(n,p-1)}_8(t) =&\ n \int_0^t  g^{(\aexp-1)}(\nZ_{A_3}(s))\l_k(\nZ(s))\ ds=  n \kappa_8 \int_0^tg^{(\aexp-1)}(\nZ_{A_3}(s))  \nZ_{A_2}(s)(\nZ_{A_2}(s)-n^{-1}) \ ds,
\end{align*}
and the martingale, $\mart^{(n,p-1)}_8$, is defined by
\begin{align*}
\mart^{(n,p-1)}_8(t) = &\ \int_{[0,\infty]\times [0,T]} g^{(\aexp-1)}(\nZ_{A_3}(s-)) \indic_{[0,  n \kappa_8 \nZ_{A_2}(s-)(\nZ_{A_2}(s-)-n^{-1})]}(u) \tilde \xi_{8}(du\times ds).
\end{align*}
Since $\lf\{\sup_{t\leq T}\lf(\nZ_{A_2}(t)\ri)^2\ri\}$ and $\lf\{\int_0^T(\nZ_{A_3}(s)^{p-1}\ ds\ri\}$ are tight sequences of random variables by \Cref{lem:aux-tight}:\ref{item:A1-A2-tight} and the induction hypothesis, respectively, it follows that $\{n^{-1} \gamma^{(n,p-1)}_8(T)\}$, satisfying the inequality
\begin{align*}
n^{-1} \gamma^{(n,p-1)}_8(T) \leq &\  \const_{\aexp} \kappa_8 \sup_{t\leq T}\lf(\nZ_{A_2}(t)(\nZ_{A_2}(t)+1)\ri) \int_0^T \lf(1+\nZ_{A_3}(s)\ri)^{\aexp-1} \ ds,
\end{align*}
is tight. 
Next notice that  $\<\mart^{(n,p-1)}_8\>$, the predictable quadratic variation of the martingale $\widehat\mart^{(n)}_8$, is given by
\begin{align*}
\<\mart^{(n,p-1)}_8\>_t = &\ n \kappa_8 \int_0^t \lf(g^{(\aexp-1)}(\nZ_{A_3}(s))\ri)^2 \nZ_{A_2}(s)(\nZ_{A_2}(s)-n^{-1}) \ ds.
\end{align*}
Therefore, 
\begin{align*}
\<n^{-1}\mart^{(n,p-1)}_8\>_T &=\ n^{-2}\<\mart^{(n,p-1)}_8\>_T \leq\ n^{-1}\const_{\aexp}\kappa_8 \int_0^t \lf(1+\nZ_{A_3}(s)\ri)^{2(\aexp-1)} \nZ_{A_2}(s)(\nZ_{A_2}(s)-n^{-1}) \ ds\\
&\leq  \const_{\aexp}\kappa_8 \lf(n^{-1}\sup_{t\leq T} (1+\nZ_{A_3}(s))^{p-1}\ri) \sup_{t\leq T}\lf(\nZ_{A_2}(t)(\nZ_{A_2}(t)+1)\ri) \int_0^T   \lf(1+\nZ_{A_3}(s)\ri)^{\aexp-1}\!\!\! ds.
\end{align*}
Again since $\lf\{\sup_{t\leq T}\lf(\nZ_{A_2}(t)\ri)^2\ri\}, \lf\{n^{-1}\sup_{t\leq T}(\nZ_{A_3}(t))^{\aexp-1} \ri\}$ and $\lf\{\int_0^T(\nZ_{A_3}(s)^{p-1}\ ds\ri\}$ are tight by  \Cref{lem:aux-tight}:\ref{item:A1-A2-tight} and  induction-hypothesis, it follows that $\{\<n^{-1}\mart^{(n,p-1)}_8\>_T\}$ is tight. Hence,  by  \Cref{lem:sup-mart-tight}  $\{\sup_{t\leq T}|n^{-1}\mart^{(n,p-1)}_8(t)|\}$ is tight, and thus so is $\{n^{-1} G^{(n,p-1)}_{8}(T) \}$. This establishes   \ref{item:A3-conv} for exponent $\aexp$.

To prove \ref{item:A3-pwr-int-tight}, notice that 
 $$ \hat{R}^{(n,\aexp)}_{-8}(t) \leq -\aexp\int_0^t \big(\nZ_{A_3}(s-)\big)^{\aexp-1}\indic_{[0,  n \kappa_{-8} \nZ_{A_3}(s-)\nZ_{A_4}(s-)]}(u) \xi_{-8}(du\times ds)+ \tilde G^{(n,p-2)}_{-8}(t)$$
  and then rearranging \eqref{eq:A3-sq}  gives
\begin{align*}
\aexp\int_0^t & \big(\nZ_{A_3}(s)\big)^{\aexp-1}\indic_{[0,  n \kappa_{-8} \nZ_{A_3}(s-)\nZ_{A_4}(s-)]}(u) \xi_{-8}(du\times ds)\\
 \leq &\  (\nZ_{A_3}(0))^{\aexp} + G^{(n,p-1)}_{-7}(T) + G^{(n,p-1)}_{-7'}(T) + G^{(n,p-1)}_{8}(T)  +\tilde G^{(n,p-2)}_{7}(T) + \tilde G^{(n,p-2)}_{7'}(T) +\tilde G^{(n,p-2)}_{-8}(T)
\end{align*}
which, by the results established while proving \ref{item:A3-conv}, shows that the sequence 
$$\lf\{n^{-1}\int_0^t \big(\nZ_{A_3}(s-)\big)^{\aexp-1}\indic_{[0,  n \kappa_{-8} \nZ_{A_3}(s-)\nZ_{A_4}(s-)]}(u) \xi_{-8}(du\times ds)\ri\}$$ is tight.
Now observe that
\begin{align}\label{eq:int-A3pwr-A4}
\begin{aligned}
\kappa_{-8}\int_0^t \big(\nZ_{A_3}(s)\big)^{\aexp}\nZ_{A_4}(s)\ ds =&  - n^{-1}\bar \mart^{(n,\aexp-1)}_{-8}(t) + n^{-1}\int_0^t \big(\nZ_{A_3}(s-)\big)^{\aexp-1}\\
& \hs{3cm} \times\indic_{[0,  n \kappa_{-8} \nZ_{A_3}(s-)\nZ_{A_4}(s-)]}(u) \xi_{-8}(du\times ds),
\end{aligned}
\end{align}
where the martingale term $\bar \mart^{(n,\aexp-1)}_{-8}$ is defined by
\begin{align*}
\bar \mart^{(n,\aexp-1)}_{-8}(t)= \int_0^t \big(\nZ_{A_3}(s-)\big)^{\aexp-1}\indic_{[0,  n \kappa_{-8} \nZ_{A_3}(s-)\nZ_{A_4}(s-)]}(u) \tilde\xi_{-8}(du\times ds)
\end{align*}
The quadratic variation $\<n^{-1}\bar \mart^{(n,p-1)}_{-8}\>$ of the martingale $n^{-1}\bar \mart^{(n,p-1)}_{-8}$  is given by
\begin{align*}
\<n^{-1}\bar \mart^{(n,\aexp-1)}_{-8}\>_t=&\ n^{-1}\kappa_{-8}\int_0^t (\nZ_{A_3}(s))^{2\aexp-1}\nZ_{A_4}(s)\ ds\\
 \leq& \kappa_{-8}\lf(n^{-1}\sup_{s\leq t} (\nZ_{A_3}(s))^{\aexp}\ri) \sup_{s\leq t} \nZ_{A_4}(s) \int_0^t \big(\nZ_{A_3}(s)\big)^{\aexp-1}\ ds.
\end{align*}
Since $\{\sup_{s\leq t} \nZ_{A_4}(s)\}$ and $\{n^{-1}\sup_{s\leq t} (\nZ_{A_3}(s))^{\aexp}\}$ are tight by \Cref{lem:aux-tight}:\ref{item:conv-A4} and just proven assertion \ref{item:A3-conv}, and $\lf\{\int_0^t \big(\nZ_{A_3}(s)\big)^{\aexp-1}\ ds\ri\}$ is tight by the induction hypothesis, $\{\<n^{-1}\bar \mart^{(n,\aexp-1)}_{-8}\>_T\}$ is tight, and  so is $\lf\{n^{-1}\sup_{t\leq T}|\bar \mart^{(n,\aexp-1)}_{-8}(t)|\ri\}$ by \Cref{lem:sup-mart-tight}. We conclude from \eqref{eq:int-A3pwr-A4} that the sequence\\ $\lf\{\int_0^t \big(\nZ_{A_3}(s)\big)^{\aexp}\nZ_{A_4}(s)\ ds\ri\}$ is tight. Now by essentially the same techniques used in the proof of \Cref{lem:A3-int-tight}, it follows that the sequence $\lf\{\int_0^T (\nZ_{A_3}(s))^{\aexp}\ ds\ri\}$ is tight, which proves \ref{item:A3-pwr-int-tight}.

\end{proof}

\begin{corollary} \label{cor:react-tight-minus8}
Under  \Cref{assum:main}, the sequence of processes $\lf\{n^{-1} \nR_{-8}\ri\}$ is $C$-tight in $D([0,T], [0,\infty))$.
\end{corollary}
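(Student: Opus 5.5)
The plan is to follow the template of \Cref{lem:aux-tight}:\ref{item:react-tight-1}. Split via \eqref{eq:react-split}:
\[
n^{-1}\nR_{-8}(t) = n^{-1}\tnR_{-8}(t) + \nmeanreact_{-8}(t), \qquad \nmeanreact_{-8}(t)=\kappa_{-8}\int_0^t \nZ_{A_3}(s)\nZ_{A_4}(s)\,ds .
\]
It then suffices to show two things. First, $\{\nmeanreact_{-8}\}$ is tight in $C([0,T],[0,\infty))$. Second, $\sup_{t\leq T}|n^{-1}\tnR_{-8}(t)|\prt 0$. Together these give $C$-tightness of the sum.

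\emph{Compensator term.} The bound $\sup_n\EE[\nmeanreact_{-8}(T)]<\infty$ from \eqref{eq:mmt-react-bd-7-7'-8} gives tightness of $\{\nmeanreact_{-8}(T)\}$, but not of the modulus of continuity. For equicontinuity I would use Hölder with the exponent $\aexp>2$. For $0\le s<t\le T$,
\[
\nmeanreact_{-8}(t)-\nmeanreact_{-8}(s)\le \kappa_{-8}\sup_{r\le T}\nZ_{A_4}(r)\,\Big(\int_0^T(\nZ_{A_3}(r))^{\aexp}dr\Big)^{1/\aexp}(t-s)^{1-1/\aexp}.
\]
The factor $\{\sup_{r\le T}\nZ_{A_4}(r)\}$ is tight by \Cref{lem:aux-tight}:\ref{item:conv-A4} and \Cref{assum:main}:\ref{item:initial-other}. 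The integral $\{\int_0^T(\nZ_{A_3})^{\aexp}dr\}$ is tight by \Cref{prop:A3-tight-prop}:\ref{item:A3-pwr-int-tight}. The random Hölder constant is therefore tight, which gives uniform control of the modulus of continuity in probability. Tightness in $C$ then follows, as in \Cref{lem:int-eq-tight} or Arzelà--Ascoli.

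\emph{Martingale term.} The predictable quadratic variation is
\[
\langle n^{-1}\tnR_{-8}\rangle_T = n^{-1}\kappa_{-8}\int_0^T \nZ_{A_3}(s)\nZ_{A_4}(s)\,ds = n^{-1}\nmeanreact_{-8}(T).
\]
Its expectation is $O(n^{-1})$ by \eqref{eq:mmt-react-bd-7-7'-8}. The BDG (or Doob) inequality then gives $\EE[\sup_{t\le T}|n^{-1}\tnR_{-8}(t)|^2]\le \cnst_2 n^{-1}\sup_n\EE[\nmeanreact_{-8}(T)]\to 0$. Alternatively, combining tightness of $\{\nmeanreact_{-8}(T)\}$ with the Lenglart-type \Cref{lem:sup-mart-tight} yields convergence to zero in probability.

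\emph{Main obstacle.} The delicate step is the equicontinuity of $\nmeanreact_{-8}$. This is because $\nZ_{A_3}$ is not bounded uniformly in $n$: only $n^{-1}\sup_t(\nZ_{A_3})^{\aexp}$ is controlled. A pure $L^1$ bound does not prevent mass from concentrating in short time intervals, which is why the higher-moment integral estimate of \Cref{prop:A3-tight-prop} with $\aexp>1$ is needed. If the Hölder route needed adjustment, I would first try a truncation alternative. Split $\nZ_{A_3}=\nZ_{A_3}\wedge M+(\nZ_{A_3}-M)^+$; the bounded part is trivially equicontinuous. The remainder is bounded by $M^{1-\aexp}\int_0^T(\nZ_{A_3})^{\aexp}ds\cdot\sup\nZ_{A_4}$, which can be made small in probability uniformly in $n$ by taking $M$ large.
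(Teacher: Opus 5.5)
Your proof is correct, but it takes a different route from the paper. The paper never handles the compensator of $\nR_{-8}$ directly. Instead it solves the $A_3$ equation in \eqref{eq:full_gly_scaled_eq} for $\nR_{-8}$:
\[
n^{-1}\nR_{-8}(t) = n^{-1}\big(\nZ_{A_3}(0)-\nZ_{A_3}(t)\big) + n^{-1}\big(\nR_{-7}+\nR_{-7'}-\nR_7-\nR_{7'}\big)(t) + n^{-1}\nR_8(t).
\]
It then transfers $C$-tightness from $n^{-1}\nR_8$, which is \Cref{lem:aux-tight}:\ref{item:react-tight-2}; the intensity of $\nR_8$ involves only the well-controlled $\nZ_{A_2}$. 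The remaining terms vanish uniformly: $n^{-1}\sup_{t\le T}\nZ_{A_3}(t)\prt 0$ by \Cref{rem:A3-conv-0}, and the $\pm7,\pm7'$ counts are only $O(n^{1/2})$ in $L^1$ by \Cref{lem:aux-tight}.

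You instead use the generic compensator/martingale split and obtain equicontinuity of $\nmeanreact_{-8}$ from H\"older's inequality together with \Cref{prop:A3-tight-prop}:\ref{item:A3-pwr-int-tight}. Every step checks out: the predictable quadratic variation is indeed $n^{-1}\nmeanreact_{-8}(T)$ with uniformly bounded mean by \eqref{eq:mmt-react-bd-7-7'-8}, and the random H\"older constant is tight.

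The trade-off is as follows. The paper's stoichiometric trick is shorter and only needs part \ref{item:A3-conv} of \Cref{prop:A3-tight-prop} (via \Cref{rem:A3-conv-0}) plus bounds already in hand. Yours leans on part \ref{item:A3-pwr-int-tight}, whose proof itself builds on part \ref{item:A3-conv}. In exchange, your argument does not depend on a balance identity in the network. It also yields more: $C$-tightness of the compensator $\nmeanreact_{-8}$ itself, with a tight random H\"older-$(1-1/\aexp)$ modulus.
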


\begin{proof}
Rearranging the equation of $\nZ_{A_3}$ in \eqref{eq:full_gly_scaled_eq} gives
\begin{align}
\nR_{-8}(t) =&\ \nZ_{A_3}(0) -\nZ_{A_3}(t)- \nR_7(t) + \nR_{-7}(t) - \nR_{7'}(t) +\nR_{-7'}(t) + \nR_8(t).
\end{align}
We already know from \Cref{lem:aux-tight}:\ref{item:react-tight-2} that the sequence $\lf\{n^{-1}\nR_8\ri\}$ is $C$-tight in $D([0,T], [0,\infty))$, and by \Cref{rem:A3-conv-0}, $n^{-1}\sup_{t\leq T}\nZ_{A_3}(t) \prt 0$ . The assertion now follows from the observation that   $\sup_n n^{-1}\EE\lf[\sup_{t\leq T}\nR_k(t)\ri] = \sup_n n^{-1}\EE\lf[\nR_k(T)\ri] \nrt 0$ for $k=\pm 7, \pm 7'$ by \Cref{lem:aux-tight}:\ref{item:react-tight-1} and \eqref{eq:mmt-react-bd-7-7'-8}.
\end{proof}

\np
We are now ready to prove \Cref{prop:rel_compactness}.
\begin{proof}[Proof of \Cref{prop:rel_compactness}]
The $C$-tightness of $\lf\{\nZ_S \equiv (\nZ_{A_1},\nZ_{A_2}, \nZ_{A_4})\ri\}$ follows immediately from \Cref{lem:aux-tight}:\ref{item:react-tight-1}, \ref{item:react-tight-2} \& \ref{item:conv-A4} and \Cref{cor:react-tight-minus8}.

To show that $\lf\{\nocc_{F}\ri\}$ is tight, we need to show that for every $\ep>0$, there exists a compact set $\SC{K} \subset \spmeas_T([0,\infty)^{7}\times[0,T])$ such that
$\limsup_{n\rt \infty}\PP\lf(\nocc_{ F} \notin \SC{K}\ri) \leq \ep$.

By Markov inequality and Prohorov's theorem it easily follows that  for every constant $c>0$, 
$$\SC{K}_{c} \dfeq \lf\{\mu \in \spmeas_T([0,\infty)^7\times [0,T]): \int _{[0,\infty)^7\times [0,T]} \|z\|_1 \mu(dz\times ds) \leq c\ri\}$$ is a  compact set of $\spmeas_T([0,\infty)^7\times [0,T])$. Fix an $\ep>0$. Now because of the assumptions on $\{\nconsconst_1\}$ and $\{\nconsconst_2\}$, there exists an $K_2\equiv K_2(\vep/3)>0$ such that
\begin{align}\label{eq:tight-cons-const}
	\inf_n\PP\lf[0\leq \nconsconst_1, \nconsconst_2 \leq K_2(\vep/3)\ri]\geq 1-\vep/3.
\end{align}   
Now let $K_1(\cdot), N_0(\cdot)$ and $\SC{P}^n(K_1(\cdot))$ be as in the proof of \Cref{lem:A3-int-tight}, so that   \eqref{eq:tight-int-A3} holds.
We now show that for all $n \geq N_0(\ep/3)$, 
$$\tilde{\SC{P}}^n_F(\ep) \dfeq \PP\lf(\nocc_{ F} \notin \SC{K}_{K_1(\ep/3)+2K_2(\ep/3)T}\ri) \leq \ep.$$
 To this end, first notice that by  \eqref{eq:cons_laws},  
$$0\leq \nZ_{E_1}, \nZ_{E_1^*}, \nZ_{E_1A_1}, \nZ_{E_1^*A_1} \leq \  \nconsconst_1,  \quad
 0\leq \nZ_{E_2}, \nZ_{E_2A_2} \leq \nconsconst_2.$$ 

Now writing $\nZ_F =(\nZ_{A_3}, \nZ_{\tilde F})$ with 
\begin{align}
	\label{eq:fast-minus-A3}
	\nZ_{\tilde F}=(\nZ_{E_1}, \nZ_{E_1^*}, \nZ_{E_1A_1}, \nZ_{E_1^*A_1}, \nZ_{E_2}, \nZ_{E_2A_2}),
\end{align}
 we have for all $n \geq N_0(\vep/3)$
%Now denoting  the index set $\tilde{F}$ by $\tilde{F} =\{E_1, E_1^*, E_1A_1, E_1^*A_1, E_2, E_2A_2\}$, we have for all $n \geq N_0(\vep/3)$
\begin{align*}
	\tilde{\SC{P}}^n_F(\ep)=&\ \PP\lf(\int_{[0,\infty)^{7}\times[0,T]}\|z_F\|_1 
     \nocc_{F}(dz_F\times ds) \geq K_1(\ep/3)+2K_2(\ep/3)T\ri)\\
	 =&\ \PP\lf(\int_0^T \left( \nZ_{A_3}(s)+ \|\nZ_{\tilde F}(s)\|_1\ \right) ds \geq K_1(\ep/3)+2K_2(\ep/3)T\ri)\\
	 \leq &\ \SC{P}^n(K_1(\ep/3))+ \PP\lf(\int_0^{T}\|\nZ_{\tilde F}(s)\|_1 \ ds \geq 2K_2(\ep/3)T \ri)\\
	 \leq &\ \SC{P}^n(K_1(\ep/3))+ \PP\lf(T(\nconsconst_1+\nconsconst_2) \geq 2K_2(\ep/3)T\ri)\\
	 \leq &\  \SC{P}^n(K_1(\ep/3))+ \PP(\nconsconst_1 \geq K_2(\ep/3))+ \PP(\nconsconst_2 \geq K_2(\ep/3)) \leq \ep.
\end{align*}

%  
%Once we get these estimates we work with $\modu(\nPhi_S,T,\delta)$, the modulus of continuity of $\nPhi_S$,  given by
%\begin{align*}
%	\modu(\nPhi_S,T,\delta) \dfeq \sup_{\substack{t_1,t_2\in [0,T], \\ |t_1-t_1|\leq \delta}}|\nPhi_S(t_1)-\nPhi_S(t_2)|, 
%\end{align*}
%It now easily follows that for some const $\const$ (independent of $n$)
%\begin{align*}
%\EE\modu(\nPhi_S,T,\delta) \leq \const \delta.
%\end{align*}
%Markov inequality now implies that for $\vep>0, \ \eta>0$, there exist $0<\delta<1$ and $n_0>0$ such that
%\begin{align}
%	\label{eq:modulus-of-cont}
%	\sup_{n\geq n_0}\PP\lf(\modu(\nPhi_S,T,\delta)\geq \eta\ri)\leq \vep,
%\end{align}
%
%Since the sequence $\{|\nPhi_S(0)|\}$ is tight, $C$-tightness of $\nPhi_S$ follows by \cite[Theorem 7.3]{Billingsley1999Convergence},

\end{proof}

Continuing with our convention of denoting states of the processes, we represent a typical state $z_F \in [0,\infty)^7$ of $\nZ_F = (\nZ_{A_3}, \nZ_{\tilde F})$ (see \eqref{eq:fast-minus-A3}) as $z_F = (z_{A_3}, z_{\tilde F})$ with $z_{\tilde F} =(z_{E_1}, z_{E_1^*}, z_{E_1A_1}, z_{E_1^*A_1}, z_{E_2}, z_{E_2A_2})$.

The following corollary is now immediate.
\begin{corollary} \label{cor:phi-prelim-int-tight}
 Let $\phi:[0,\infty)^{10}\times [0,T] \rt \R$ be a  function satisfying the following growth conditions: there exist a constant $\const_{\phi}$ and exponents $\aexp_0 \geq 0,\ \aexp_1 =\aexp$ (with $\aexp$  as in \Cref{assum:main}: \ref{item:initial-other}) such that  for $ (z_{A_3}, z_{\tilde F}) \in [0,\infty)^{7} , z_S \in [0,\infty)^3$, 
   \begin{align}
   \label{eq:phi-growth}
   |\phi(z_F,z_S)| \leq\ \const_{\phi} \lf(1+\|z_{\tilde F}\|_1\ri)^{\aexp_0}\lf(1+z_{A_3}\ri)^{\aexp_1}\lf(1+\|z_S\|_1\ri)^{\aexp_0}   
   \end{align}
Then under   \Cref{assum:main}, $\lf\{ \int_0^t \phi(\nZ_F(s),\nZ_S(s))\ ds\ \equiv\  \int_{[0, t]\times [0,\infty)^7}\phi(z_F,\nZ_S(s)) \nocc_F(dz_F\times ds)\ri\}$ is tight.

\end{corollary}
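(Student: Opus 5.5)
The plan is to bound the integrand pathwise by a product of quantities already known to be tight, and then use that products of tight sequences of real random variables are tight. The time-integral equals the $\nocc_F$-integral by the definition of $\nocc_F$, so only the first form needs treatment.

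First, control the non-$A_3$ fast coordinates via the conservation laws \eqref{eq:cons_laws}. For all $s\le T$,
\[
\|\nZ_{\tilde F}(s)\|_1 \le 2(\nconsconst_1+\nconsconst_2).
\]
Hence $\sup_{s\le T}(1+\|\nZ_{\tilde F}(s)\|_1)^{\aexp_0}\le (1+2\nconsconst_1+2\nconsconst_2)^{\aexp_0}$. The right-hand side is tight because $\sup_n\EE[(\nconsconst_i)^2]<\infty$ by \Cref{assum:main}\ref{item:cons-const}.

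Second, control the slow part. We have
\[
\|\nZ_S(s)\|_1=\nZ_{A_1}(s)+\nZ_{A_2}(s)+\nZ_{A_4}(s).
\]
By \Cref{lem:aux-tight}\ref{item:A1-A2-tight}, $\sup_{s\le T}\nZ_{A_1}(s)$ and $\sup_{s\le T}\nZ_{A_2}(s)$ are tight, since they are bounded in $L^2$. By \Cref{lem:aux-tight}\ref{item:conv-A4}, $\sup_{s\le T}|\nZ_{A_4}(s)-\nZ_{A_4}(0)|\to0$ in $L^1$, and $\nZ_{A_4}(0)$ is tight by \Cref{assum:main}\ref{item:mmt-bd-initial}. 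Together these give tightness of $\sup_{s\le T}(1+\|\nZ_S(s)\|_1)^{\aexp_0}$. (Alternatively, the $C$-tightness of $\nZ_S$ from \Cref{prop:rel_compactness} gives this directly.)

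Third, and this is the only delicate factor, handle $A_3$, which is not uniformly bounded in time. Here we must not take a supremum; we integrate it instead. Using \eqref{eq:phi-growth},
\[
\Big|\int_0^t \phi(\nZ_F(s),\nZ_S(s))\,ds\Big| \le \const_\phi\,(1+2\nconsconst_1+2\nconsconst_2)^{\aexp_0}\,\sup_{s\le T}(1+\|\nZ_S(s)\|_1)^{\aexp_0}\int_0^T(1+\nZ_{A_3}(s))^{\aexp}\,ds.
\]
Next apply $(1+y)^{\aexp}\le 2^{\aexp-1}(1+y^{\aexp})$, which bounds the last integral by $2^{\aexp-1}\big(T+\int_0^T(\nZ_{A_3}(s))^{\aexp}ds\big)$. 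This is tight by \Cref{prop:A3-tight-prop}\ref{item:A3-pwr-int-tight}. The exponent matches precisely because $\aexp_1=\aexp$ was chosen in the statement; this is the reason for that restriction.

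Finally, a product of finitely many tight nonnegative random variables is tight: given $\ep$, choose each factor's bound with probability at least $1-\ep/3$ and multiply the bounds. The bound above is uniform in $t\le T$, so for every $t\in[0,T]$ the sequence is tight, as is its supremum over $t$.

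I expect no real obstacle here. All the hard work, namely the tightness of the $\aexp$-th power integral of $A_3$, has already been carried out in \Cref{prop:A3-tight-prop}.
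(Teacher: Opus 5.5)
Your proposal is correct and follows the same route as the paper, which calls the corollary immediate: bound the integrand pathwise using the conservation laws \eqref{eq:cons_laws}, the supremum of $\|\nZ_S\|_1$, and the tightness of $\int_0^T(\nZ_{A_3}(s))^{\aexp}\,ds$ from \Cref{prop:A3-tight-prop}\ref{item:A3-pwr-int-tight}. One cosmetic point: the conservation laws give $\|\nZ_{\tilde F}(s)\|_1=\nconsconst_1+\nconsconst_2$ exactly, so your factor $2$ is harmless but unnecessary.
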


%\begin{proof}
%Since
% \begin{align*}
% \int_0^t \phi(\nZ_F(s),\nZ_S(s))\ ds \leq  \const_{\phi} \int_0^T \lf(1+\|\nZ_{\tilde F}(s)\|_1\ri)^{\aexp_0}\lf(1+\nZ_{A_3}(s)\ri)^2(1+\|\nZ_S(s)\|_1)^{\aexp_0}ds\\
% \leq &\ \const_{\phi}(1+\nconsconst_1+\nconsconst_2)^{\aexp_0} \sup_{s\leq t}(1+\|\nZ_S(s)\|_1)^{\aexp_1}  \int_0^T \big(\nZ_{A_3}(s)\big)^2\ ds,
% \end{align*}
% the assertion follows from 
%\end{proof}

\begin{lemma} \label{lem:conv-occ-ZS}
   Suppose that \Cref{assum:main} holds  and $(\nocc_F, \nZ_S)\RT  (\occ_F, Z_S)$ as $n\rt \infty$. Let $\phi:[0,\infty)^{10}\times [0,T] \rt \R$ be a continuous function satisfying the  growth condition in \eqref{eq:phi-growth} with $0\leq \aexp_1 <\aexp$ and the following continuity condition:  there exist a constant $\const_{\phi}$ and exponents $\aexp_0 \geq 0$,  such that 
  for $(z_{A_3}, z_{\tilde F}) \in [0,\infty)^{ 7} , z_S, z_S' \in [0,\infty)^3$,
  \begin{align}
  \label{eq:phi-lip}
  |\phi(z_F,z_S, t)- \phi(z_F,z'_S, t)| \leq\ \const_{\phi} \lf(1+\|z_{\tilde F}\|_1\ri)^{\aexp_0}\lf(1+z_{A_3}\ri)^{\aexp}\lf(1+\|z_S\|_1+\|z'_S\|_1\ri)^{\aexp_0}\|z_S - z'_S\|_1.
  \end{align}
  Then as $n\rt \infty$
    \begin{align*}%\label{eq:occ-int}
   % \begin{aligned}
        \int_0^t \phi(\nZ_F(s),\nZ_S(s))\ ds\ \equiv&\  \int_{[0, t]\times [0,\infty)^7}\phi(z_F,\nZ_S(s)) \nocc_F(dz_F\times ds)\\
         \RT&\ \int_{[0, t]\times [0,\infty)^7 }\phi(z_F,Z_S(s)) \occ_F(dz_F\times ds).
     %  \end{aligned}
    \end{align*}
 %If $(\nocc_F, \nZ_S)\rt  (\occ_F, Z_S)$ a.s. or in probability,  then the above convergence  holds in probability. 
\end{lemma}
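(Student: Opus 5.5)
The plan is to reduce, via the Skorokhod representation theorem, to almost sure convergence, and then split the error into two pieces. Since $(\nocc_F,\nZ_S)\RT(\occ_F,Z_S)$ and the limit $Z_S$ is continuous (\Cref{prop:rel_compactness}), we may work on a probability space where $\nocc_F\rt\occ_F$ in $\spmeas_T$ and $\nZ_S\rt Z_S$ uniformly on $[0,T]$, almost surely. The tightness estimates are distributional statements, so they transfer to the new space.

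We then write
\begin{align*}
\int_0^t \phi(\nZ_F(s),\nZ_S(s))\,ds - \int_{[0,t]\times[0,\infty)^7}\phi(z_F,Z_S(s))\,\occ_F(dz_F\times ds) = \mathrm{I}_n(t)+\mathrm{II}_n(t),
\end{align*}
where
\begin{align*}
\mathrm{I}_n(t)=\int_0^t\big[\phi(\nZ_F(s),\nZ_S(s))-\phi(\nZ_F(s),Z_S(s))\big]\,ds
\end{align*}
and
\begin{align*}
\mathrm{II}_n(t)=\int\phi(z_F,Z_S(s))\,\big(\nocc_F-\occ_F\big)(dz_F\times ds).
\end{align*}

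For $\mathrm{I}_n$ we use \eqref{eq:phi-lip}. It gives
\begin{align*}
|\mathrm{I}_n(t)|\le \const_\phi\,(1+\nconsconst_1+\nconsconst_2)^{\aexp_0}\Big(1+2\sup_{s\le T}\|\nZ_S(s)\|_1+\sup_{s\le T}\|Z_S(s)\|_1\Big)^{\aexp_0}\sup_{s\le T}\|\nZ_S(s)-Z_S(s)\|_1\int_0^T(1+\nZ_{A_3}(s))^{\aexp}\,ds.
\end{align*}
All factors except the sup-difference are tight: the conservation constants by \Cref{assum:main}, the slow process by \Cref{lem:aux-tight}, and the last integral by \Cref{prop:A3-tight-prop}:\ref{item:A3-pwr-int-tight}. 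The sup-difference tends to $0$. Hence $\mathrm{I}_n\prt0$.

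For $\mathrm{II}_n$, the integrand $h(z_F,s)=\indic_{[0,t]}(s)\,\phi(z_F,Z_S(s))$ is continuous in $z_F$ and unbounded, so weak convergence does not apply directly; this truncation is the main obstacle. We introduce a cutoff $\chi_M\in C_c$ with $0\le\chi_M\le1$ and $\chi_M=1$ on $\{\|z_F\|_1\le M\}$. The truncated function $h\chi_M$ is bounded, and it is continuous except on $\{s=t\}$, which is $\occ_F$-null because the time marginal of $\occ_F$ is Lebesgue measure. Since $Z_S$ is continuous, the a.s. weak convergence then gives $\int h\chi_M\,d\nocc_F\rt\int h\chi_M\,d\occ_F$.

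The tail is controlled by uniform integrability. The growth exponent of $\phi$ in $z_{A_3}$ is $\aexp_1<\aexp$ and the $z_{\tilde F}$-components are bounded by the conservation constants, so on $\{\|z_F\|_1>M\}$ we have
\begin{align*}
|h|\le C\,(1+z_{A_3})^{\aexp}\,M^{-(\aexp-\aexp_1)}
\end{align*}
whenever $z_{A_3}\gtrsim M$. Here $C$ is random but tight, built from $\nconsconst_i$ and $\sup\|Z_S\|$. The case where $\|z_{\tilde F}\|_1$ is large is excluded on the event $\{\nconsconst_1+\nconsconst_2\le M/2\}$, whose complement has small probability uniformly in $n$. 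Thus
\begin{align*}
\int|h|(1-\chi_M)\,d\nocc_F\le C M^{-(\aexp-\aexp_1)}\int_0^T(1+\nZ_{A_3})^{\aexp}\,ds
\end{align*}
off a small-probability set, and this is small in probability uniformly in $n$ by \Cref{prop:A3-tight-prop}:\ref{item:A3-pwr-int-tight}.

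For the limit measure, Fatou's lemma (lower semicontinuity under weak convergence, applied to the truncations $\min((1+z_{A_3})^{\aexp},K)$) gives $\int(1+z_{A_3})^{\aexp}\,d\occ_F<\infty$ a.s. The same tail bound therefore holds for $\occ_F$. Letting $n\rt\infty$ and then $M\rt\infty$ gives $\mathrm{II}_n\prt0$, which yields the claimed convergence in distribution.
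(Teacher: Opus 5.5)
Your proposal is correct and follows essentially the same route as the paper: a Skorokhod representation, removal of the error from replacing $\nZ_S$ by $Z_S$ via \eqref{eq:phi-lip}, and then the remaining term handled by a compactly supported cutoff, a tail bound of order $M^{-(\aexp-\aexp_1)}\int_0^T(1+\nZ_{A_3}(s))^{\aexp}\,ds$ off the event where $\nconsconst_1+\nconsconst_2$ is large, and Fatou's lemma for integrability under the limit measure. The differences are cosmetic: the paper also places $\nconsconst_1,\nconsconst_2$ and $\int_0^T(1+\nZ_{A_3}(s))^{\aexp}\,ds$ into the Skorokhod coupling so that it can argue almost surely via $\limsup$ rather than in probability, while you spell out why the discontinuity of $\indic_{[0,t]}$ at $s=t$ is harmless, which the paper glosses over.
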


\begin{proof}
Notice that by \Cref{assum:main}:\ref{item:cons-const}, \Cref{prop:rel_compactness} and \Cref{prop:A3-tight-prop}:\ref{item:A3-pwr-int-tight}, the sequence \\ $\lf\{\lf(\nocc_F, \nZ_S, \nconsconst_1, \nconsconst_2, \int_0^t \big(1+\nZ_{A_3}(s)\big)^{\aexp}ds\ri) \ri\}$ is relatively compact, and let $\lf(\occ_F, Z_S, \consconst_1, \consconst_2, V\ri)$, for some $[0,\infty)$-valued random variables $\consconst_1, \consconst_2, V$,  be its limit point. By Skorohod's theorem, we can assume that 
\begin{align}\label{eq:conv-skoro}
\lf(\nocc_F, \nZ_S, \nconsconst_1, \nconsconst_2, \int_0^t \big(1+\nZ_{A_3}(s)\big)^{\aexp} ds\ri) \nrt \lf(\occ_F, Z_S, \consconst_1, \consconst_2, V\ri), \quad \text{a.s.}
\end{align}
along a subsequence. By, a slight abuse of notation, we continue to denote the subsequence by $\{n\}$.

Recall from \Cref{prop:rel_compactness}, the paths of $Z_S$ are almost surely in $C([0, T], [0,\infty)^3)$ (in fact, from \Cref{lem:aux-tight}:\ref{item:conv-A4}, $Z_4(s) \equiv Z_4(0)$). In particular, $\sup_{s\leq T}\|Z_S(s)\|_1 < \infty$ a.s., and 
	\begin{align}\label{eq:unif-conv-Z_V}
		\sup_{t\leq T}\|\nZ_S(t) - Z_S(t)\|_1 \stackrel{n\rt\infty} \Rt 0, \quad \text{a.s.}
	\end{align}
Next write
	\begin{align*}
 \int_0^t \phi(\nZ_F(s),\nZ_S(s))\ ds =&\  \int_0^t \phi(\nZ_F(s),Z_S(s))\ ds + \cal{E}^{(n,0)}(t).
	\end{align*}
Here, because of  \eqref{eq:cons_laws} amd \eqref{eq:phi-lip},  the term $\cal{E}^{(n,0)}(t) \dfeq \int_0^t \lf(\phi(\nZ_F(s),\nZ_S(s)) -\phi(\nZ_F(s),Z_S(s))\ri) ds $ can be estimated as 
\begin{align*}
\begin{aligned}
	|\cal{E}^{(n,0)}(t)| & \leq \ \const_{\phi}(1+\nconsconst_1+\nconsconst_2)^{\aexp_0} \sup_{s\leq t}(1+\|Z_S(s)\|_1+\|\nZ_S(s)\|_1)^{\aexp_0} \sup_{s\leq t}\|\nZ_S(s) - Z_S(s)\|_1\\
	&\ \times  \int_0^t \big(1+\nZ_{A_3}(s)\big)^{\aexp}\ ds\ \nrt\ \ 0, \quad \text{a.s.}
	\end{aligned}
\end{align*}
where the convergence holds because of \eqref{eq:conv-skoro} and \eqref{eq:unif-conv-Z_V}.

%Now since path space of $Z_S$ is $C([0,T], [0,\infty]^3)$, $\sup_{s\leq T}\|Z_S(s)\|_1 < \infty$ a.s. 
%Because of \Cref{assum:main}:\ref{item:cons-const}, \Cref{lem:aux-tight}:\ref{item:A1-A2-tight} \& \ref{item:conv-A4} (which shows   $ \sup_n\EE(\sup_{s\leq T}\|\nZ_S(s)\|_1) < \infty$)
%and \Cref{prop:A3-tight-prop}:\ref{item:A3sq-int-tight}, it is now clear that \eqref{eq:unif-conv-Z_V} implies, as $n\rt \infty$,
%$\sup_{t\leq T}|\cal{E}^{(n)}(t)| \prt 0.$

The assertion will follow once we now show that 
\begin{align}
\label{eq:phi-int-conv-1}
\begin{aligned}
\int_0^t \phi(\nZ_F(s),Z_S(s))\ ds &\ \equiv \int_{[0, t]\times [0,\infty)^7}\!\!\!\!\phi(z_F, Z_S(s)) \nocc_F(dz_F\times ds)\\ &\ \nrt \int_{[0, t]\times [0,\infty)^7}\!\!\!\!\phi(z_F, Z_S(s)) \occ_F(dz_F\times ds).
\end{aligned}
\end{align}
Toward this end, we first observe that
\begin{align}
\label{eq:int-phi-fin}
\int_{[0, T]\times [0,\infty)^7}|\phi(z_F, Z_S(s))| \occ_F(dz_F\times ds) < \infty, \quad \text{a.s.}
\end{align}
Indeed, by Fatou's lemma and the growth assumption on $\phi$,
\begin{align*}
\int_{[0, T]\times [0,\infty)^7}& |\phi(z_F, Z_S(s))| \occ_F(dz_F\times ds)\\
\leq  & \ \const_{\phi}\int_{[0, T]\times [0,\infty)^7}\lf(1+\|z_{\tilde F}\|_1\ri)^{\aexp_0}\lf(1+z_{A_3}\ri)^{\aexp_1}\lf(1+\|Z_S(s)\|_1\ri)^{\aexp_0} \occ_F(dz_F\times ds) \\
\leq  & \ \const_{\phi} \liminf_{n\rt \infty} \int_{[0, T]\times [0,\infty)^7}\lf(1+\|z_{\tilde F}\|_1\ri)^{\aexp_0}\lf(1+z_{A_3}\ri)^{\aexp_1}\lf(1+\|Z_S(s)\|_1\ri)^{\aexp_0}  \nocc_F(dz_F\times ds) \\
=&\  \const_{\phi} \liminf_{n\rt \infty} \int_0^T \lf(1+\|\nZ_{\tilde F}(s)\|_1\ri)^{\aexp_0}\lf(1+\nZ_{A_3}(s)\ri)^{\aexp_1}(1+\|Z_S(s)\|_1)^{\aexp_0}ds \\
\leq &\ \liminf_{n\rt \infty} \const_{\phi}(1+\nconsconst_1+\nconsconst_2)^{\aexp_0} \sup_{s\leq t}(1+\|Z_S(s)\|_1)^{\aexp_0}  \lf(\int_0^{ T} \big(1+\nZ_{A_3}(s)\big)^{\aexp}\ ds\ri)^{\aexp_1/\aexp}{ T}^{1-\aexp_1/\aexp}\\
= &\  \const_{\phi}(1+\consconst_1+\consconst_2)^{\aexp_0} \sup_{s\leq t}(1+\|Z_S(s)\|_1)^{\aexp_0}  V^{\aexp_1/\aexp}{ T}^{1-\aexp_1/\aexp}
< \ \infty, \quad \text{a.s.},
\end{align*}
where the fourth step used H\"older's inequality.  
Define the compact set $\cal{K}_B \subset [0,\infty)^7$ by
\begin{align*}
\cal{K}_B = \lf\{z_{\tilde F} \in [0,\infty)^6: \|z_{\tilde F}\|_1 \leq B \ri\} \times [0,B]
\end{align*}
Now \eqref{eq:int-phi-fin} implies that 
\begin{align}
\label{eq:int-phi-tail}
\int_{[0, T]\times \lf(\cal{K}_B\ri)^c}|\phi(z_F, Z_S(s))| \occ_F(dz_F\times ds) \stackrel{B \rt \infty}\Rt 0, \quad \text{a.s.}
\end{align}
and hence also in probability, there exists a $B_0>0$ such that for any $B>B_0$,
\begin{align}
\label{eq:phi-int-tail}
\PP\lf(\lf| \int_{[0, T]\times \lf([0,B]^7\ri)^c}|\phi(z_F, Z_S(s))| \occ_F(dz_F\times ds)\ri|>\eta/6\ri) \leq \ep/3.
\end{align}
By Urysohn's Lemma,  \cite[Page 122]{foll99}, let $\tilde \phi_{B} \in C([0,\infty)^{10}, \R)$ be such that $\lf|\tilde \phi_{B}\ri| \leq |\phi|$ pointwise, and
    \begin{align*}
        \tilde \phi_{B}(z_F,z_S) = 
        \begin{cases}
            \phi(z_F,z_S),& \quad \text{for } (z_F,z_S) \in \cal{K}_B, \\
        0, & \quad \text{for }  (z_F,z_S) \in (\cal{K}^\circ_{B+1})^c.
        \end{cases}
    \end{align*} 
   where $A^\circ$ denotes  interior of a set $A$. 
 Notice that the mapping $(z_F,s) \in \cal{K}_B\times [0,T]  \rt \tilde \phi_{B}(z_F,Z_S(s))$  is continuous and bounded. Since $\nocc_F \rt \occ_F$ a.s  in the weak topology by \eqref{eq:conv-skoro}, we have  
\begin{align}
\label{eq:occ-weak-conv}
\int_{[0, T]\times [0,\infty)^7}  \tilde \phi_{B}(z_F,Z_S(s)) (\nocc_F -\occ_F)(dz_F\times ds) \rt 0 \text{ a.s.}.
\end{align}

% and hence in probability.
 Now finally write
 \begin{align}\label{eq:occ-lim-diff}
\int  \phi(z_F,Z_S(s)) (\nocc_F -\occ_F)(dz_F\times ds)
=\int  \tilde \phi_{B}(z_F,Z_S(s)) (\nocc_F -\occ_F)(dz_F\times ds) + \cal{E}^{(n,1)}_{\phi,B}(t)- \cal{E}^{(2)}_{\phi,B}(t),
 \end{align}
 where  
 $$\lf(\cal{E}^{(n,1)}_{\phi,B}(t), \cal{E}^{(2)}_{\phi,B}(t)\ri)  \dfeq \int_{[0, T]\times [0,\infty)^7}  (\phi - \tilde \phi_{B})(z_F,Z_S(s)) \lf(\nocc_F,\occ_F\ri)(dz_F\times ds).$$ 
 Now observe that for any $t \in [0,T]$
 \begin{align*}
|&\cal{E}^{(n,1)}_{\phi,B}( t)| \leq\ 2\int_{[0, T]\times (\cal{K}_B)^c}|\phi(z_F,Z_S(s))|\nocc_F (dz_F\times ds) \\
 &\leq\ 2\int_{[0, T]\times [0,\infty)^7}  |\phi(z_F,Z_S(s))| \lf(\indic_{\{\|z_{\tilde F}\|_1\geq B\}}+\indic_{\{z_{A_3}\geq B\}}\ri)\nocc_F (dz_F\times ds)\\
 &\leq\ \const_{\phi}(1+\nconsconst_1+\nconsconst_2)^{\aexp_0} \sup_{s\leq T}(1+\|Z_S(s)\|_1)^{\aexp_0} \Bigg\{ \lf(\int_0^T \big(1+\nZ_{A_3}(s)\big)^{\aexp}\ ds\ri)^{\aexp_1/\aexp}\!\! T^{1-\aexp_1/\aexp}\indic_{\{\nconsconst_1+\nconsconst_2 \geq B\}} \\
 & \hs{1cm}+ \int_0^T \big(1+\nZ_{A_3}(s)\big)^{\aexp_1}\indic_{\{\nZ_{A_3}(s)\geq B\}} ds\Bigg\}\\
&\leq\ \const_{\phi}(1+\nconsconst_1+\nconsconst_2)^{\aexp_0} \sup_{s\leq T}(1+\|Z_S(s)\|_1)^{\aexp_0} \Bigg\{ \lf(\int_0^T \big(1+\nZ_{A_3}(s)\big)^{ p}\ ds\ri)^{\aexp_1/\aexp}\!\! T^{1-\aexp_1/\aexp}\indic_{\{\nconsconst_1+\nconsconst_2 \geq B\}}\\
 & \hs{1cm}+ (1+B)^{-(\aexp-\aexp_1)}\int_0^T \big(1+\nZ_{A_3}(s)\big)^{\aexp} ds\Bigg\}.
 \end{align*}
 By \eqref{eq:conv-skoro} and the upper semicontinuity of the indicator function $\indic_{[B,\infty)}(\cdot)$, it follows that
 \begin{align*}
 \limsup_{n \rt \infty}|\cal{E}^{(n,1)}_{\phi,B}(t)| \leq&\  \const_{\phi}(1+\consconst_1+\consconst_2)^{\aexp_0} \sup_{s\leq T}(1+\|Z_S(s)\|_1)^{\aexp_0}\lf\{V^{\aexp_1/\aexp}T^{1-\aexp_1/\aexp}\indic_{\{\consconst_1+\consconst_2 \geq B\}})+(1+B)^{-(\aexp-\aexp_1)}V\ri\}\\
 \dfeq&\ |\cal{E}^{(1)}_{\phi,B}(T)|.
 \end{align*} 
This, together with \eqref{eq:occ-weak-conv} and \eqref{eq:occ-lim-diff}, gives
 \begin{align}\label{eq:phi-int-conv-2}
 \limsup_{n\rt \infty}\lf|\int_{[0, T]\times [0,\infty)^7}  \phi(z_F,Z_S(s)) (\nocc_F -\occ_F)(dz_F\times ds)\ri| \leq |\cal{E}^{(1)}_{\phi,B}(T)| +|\cal{E}^{(2)}_{\phi,B}(T)|. 
 \end{align}
Since $\aexp_1<\aexp$ by assumption, $|\cal{E}^{(1)}_{\phi,B}(T)| \stackrel{B\rt \infty}\Rt 0$. Moreover, by \eqref{eq:int-phi-tail},
\[
|\cal{E}^{(2)}_{\phi,B}(T)| \leq 2 \int_{[0, T]\times (\cal{K}_B)^c}|\phi(z_F,Z_S(s))|\occ_F (dz_F\times ds) \stackrel{B\rt \infty}\Rt 0.
\]
\eqref{eq:phi-int-conv-1} now follows by taking $B \rt \infty$ in \eqref{eq:phi-int-conv-2}.

\end{proof}

\np
{\em Generator of the fast process:} For a fixed $z_S = (z_{A_1}, z_{A_2},z_{A_4}) \in [0,\infty)^3$, define the operator $\fgen_{z_S}$ by
% \begin{align}
% \label{eq:lim-fast-gen}
% \begin{aligned}
% \fgen_{z_S}g(z_F)=&\ \l_1(z_F, z_S) \lf(g\big(z_F-e^{(7)}_2+e^{(7)}_4\big) - g(z_F)\ri) + \l_{-1}(z_F, z_S) \lf(g\big(z_F+e^{(7)}_2-e^{(7)}_4\big) - g(z_F)\ri)\\
% & +  \l_{2}(z_F, z_S) \lf(g\big(z_F+e^{(7)}_2-e^{(7)}_4\big) - g(z_F)\ri)+ \l_{3}(z_F, z_S) \lf(g\big(z_F-e^{(7)}_3+e^{(7)}_5\big) - g(z_F)\ri)\\
% & +  \l_{-3}(z_F, z_S)  \lf(g\big(z_F+e^{(7)}_3-e^{(7)}_5\big) - g(z_F)\ri)+ \l_{4}(z_F, z_S) \lf(g\big(z_F+e^{(7)}_3-e^{(7)}_5\big) - g(z_F)\ri)\\
% & +  \l_{5}(z_F, z_S)  \lf(g\big(z_F-e^{(7)}_6+e^{(7)}_7\big) - g(z_F)\ri)+ \l_{-5}(z_F, z_S) \lf(g\big(z_F+e^{(7)}_6-e^{(7)}_7\big) - g(z_F)\ri)\\
% & +  \l_{6}(z_F, z_S)  \lf(g\big(z_F+e^{(7)}_6-e^{(7)}_7\big) - g(z_F)\ri)+ \tilde\l_{8}(z_F, z_S) \lf(g\big(z_F+e^{(7)}_1\big) - g(z_F)\ri)\\
% & + \l_{-8}(z_F, z_S) \lf(g\big(z_F-e^{(7)}_1\big) - g(z_F)\ri),
% \end{aligned}
% \end{align}
\begin{align}
\label{eq:lim-fast-gen}
\begin{aligned}
\fgen_{z_S}g(z_F)=&\ \kappa_1 z_{E_1}z_{A_1} \lf(g\big(z_F-e^{(7)}_2+e^{(7)}_4\big) - g(z_F)\ri) + \kappa_{-1} z_{E_1A_1} \lf(g\big(z_F+e^{(7)}_2-e^{(7)}_4\big) - g(z_F)\ri)\\
& +  \kappa_2 z_{E_1A_1} \lf(g\big(z_F+e^{(7)}_2-e^{(7)}_4\big) - g(z_F)\ri)+ \kappa_3 z_{E_1^*}z_{A_1} \lf(g\big(z_F-e^{(7)}_3+e^{(7)}_5\big) - g(z_F)\ri)\\
& +  \kappa_{-3} z_{E_1^*A_1}  \lf(g\big(z_F+e^{(7)}_3-e^{(7)}_5\big) - g(z_F)\ri)+ \kappa_4 z_{E_1^*A_1} \lf(g\big(z_F+e^{(7)}_3-e^{(7)}_5\big) - g(z_F)\ri)\\
& +  \kappa_5 z_{E_2}z_{A_2}  \lf(g\big(z_F-e^{(7)}_6+e^{(7)}_7\big) - g(z_F)\ri)+ \kappa_{-5} z_{E_2A_2} \lf(g\big(z_F+e^{(7)}_6-e^{(7)}_7\big) - g(z_F)\ri)\\
& +  \kappa_{6} z_{E_2A_2} \lf(g\big(z_F+e^{(7)}_6-e^{(7)}_7\big) - g(z_F)\ri)+ \kappa_8z_{A_2}^2 \lf(g\big(z_F+e^{(7)}_1\big) - g(z_F)\ri)\\
& + \kappa_{-8} z_{A_3}z_{A_4} \lf(g\big(z_F-e^{(7)}_1\big) - g(z_F)\ri),
\end{aligned}
\end{align}
where $e^{(7)}_i, \ i=1,2,\hdots 7$ are canonical unit vectors in $\R^7$. $\fgen_{z_S}$ can be interpreted as the generator of the fast process when the slow component is frozen at the state $z_S$.

We are now ready to prove \Cref{thm:FLLN-Glyco}.
\subsection{Proof of \Cref{thm:FLLN-Glyco}}
Let $(\occ_F, Z_S =(Z_{A_1}, Z_{A_2}, Z_{A_4}))$ be a limit point of $\{(\nocc_F, \nZ_S)\}$, which exists by \Cref{prop:rel_compactness}. 
Thus there exists a subsequence along which $(\nocc_F, \nZ_S)\RT (\occ_F, Z_S)$.
 % By a slight abuse of notation we continue to denote the subsequence by $(\occ_n, Z_V^{(n)}).$  We will show the limit point $(\occ, Z_V) = (\pi_{Z_S}\star \leb, (Z_s,Z_P)) $ is unique and is independent of the subsequence, and thus, the convergence holds along the entire sequence. Furthermore, since the limit is deterministic, convergence holds also in probability.
By Skorokhod representation theorem, we can assume, without loss of generality, that $(\nocc_F, \nZ_S) \rt (\occ_F, Z_S)$ a.s. in  $\spmeas\lf([0,\infty)^7  \times [0, T]\ri)\times D([0, T], [0,\infty)^3)$ along this subsequence. Notice that by $C$-tightness of $\nZ_S$ established in \Cref{prop:rel_compactness}, $Z_S$ has continuous paths, and 
\begin{align}\label{eq:unif-conv-Z_V-1}
\sup_{t\leq T}\|\nZ_S(t) - Z_S(t)\| \stackrel{n\rt\infty} \Rt 0, \quad \text{a.s.}
\end{align}
Here, by a slight abuse of notation we continued to denote the subsequence by $\{(\nocc_F, \nZ_S)\}$. Notice that by \Cref{lem:aux-tight}:\ref{item:conv-A4} and \Cref{assum:main}:\ref{item:initial-other}, $Z_{A_4}(t) \equiv Z_{A_4}(0)$ for $t \in [0,T]$.

Let $g:[0,\infty)^7 \rt \R$ be a measurable function satisfying
  \begin{align}
   \label{eq:gen-testfn-growth}
   |g(z_F)| \leq\ \const_{g} \lf(1+\|z_{\tilde F}\|_1\ri)^{\aexp_0}\lf(1+z_{A_3}\ri)^{\aexp_1}, 
   \end{align}
  with the constant $\const_{g} \geq 0$ and exponents $\aexp_0 \geq 0$ and $0\leq \aexp_1< \aexp-1$ 
Then It\^o's lemma gives 
\begin{align*}
g(\nZ_F(t))- g(\nZ_F(0)) =&\ n \int_0^t \fgen^{(n)}_{\nZ_S(s)}g(\nZ_F(s))\ ds+ \nmart_{F,g}(t),
%=&\ n \int_{[0,t]\times[0,\infty)} \fgen^{(n)}_{\nZ_S(s)}g(z_F)\ \nocc(dz_F \times ds)+ \nmart_{F,g}(t),
\end{align*}
that is,
\begin{align} \label{eq:fgen-prelim}
\int_0^t \fgen^{(n)}_{\nZ_S(s)}g(\nZ_F(s))\ ds = n^{-1}\lf(g(\nZ_F(t))- g(\nZ_F(0))\ri)-n^{-1}\nmart_{F,g}(t),
\end{align}
where for a fixed $z_S \in [0,\infty)^3$,
%{\grn 
% HW: What about we write (3.39) as $\fgen^{(n)}_{z_S}g(z_F)=\fgen_{z_S}g(z_F)+n^{-1/2}(\cdots)$?
%}
\begin{align}
\label{eq:prelim-fast-gen}
\begin{aligned}
\fgen^{(n)}_{z_S}&g(z_F)=\ \l_1(z_F, z_S) \lf(g\big(z_F-e^{(7)}_2+e^{(7)}_4\big) - g(z_F)\ri) + \l_{-1}(z_F, z_S) \lf(g\big(z_F+e^{(7)}_2-e^{(7)}_4\big) - g(z_F)\ri)\\
& +  \l_{2}(z_F, z_S) \lf(g\big(z_F+e^{(7)}_2-e^{(7)}_4\big) - g(z_F)\ri)+ \l_{3}(z_F, z_S) \lf(g\big(z_F-e^{(7)}_3+e^{(7)}_5\big) - g(z_F)\ri)\\
& +  \l_{-3}(z_F, z_S)  \lf(g\big(z_F+e^{(7)}_3-e^{(7)}_5\big) - g(z_F)\ri)+ \l_{4}(z_F, z_S) \lf(g\big(z_F+e^{(7)}_3-e^{(7)}_5\big) - g(z_F)\ri)\\
& +  \l_{5}(z_F, z_S)  \lf(g\big(z_F-e^{(7)}_6+e^{(7)}_7\big) - g(z_F)\ri)+ \l_{-5}(z_F, z_S) \lf(g\big(z_F+e^{(7)}_6-e^{(7)}_7\big) - g(z_F)\ri)\\
& +  \l_{6}(z_F, z_S)  \lf(g\big(z_F+e^{(7)}_6-e^{(7)}_7\big) - g(z_F)\ri)+ n^{-1/2}\l_{7}(z_F, z_S) \lf(g\big(z_F-e^{(7)}_1-e^{(7)}_2+e^{(7)}_3\big) - g(z_F)\ri)\\
& + n^{-1/2}\l_{-7}(z_F, z_S) \lf(g\big(z_F+e^{(7)}_1+e^{(7)}_2-e^{(7)}_3\big) - g(z_F)\ri)\\
& +n^{-1/2}\l_{7'}(z_F, z_S) \lf(g\big(z_F-e^{(7)}_1-e^{(7)}_4+e^{(7)}_5\big) - g(z_F)\ri)\\
& + n^{-1/2}\l_{-7'}(z_F, z_S) \lf(g\big(z_F+e^{(7)}_1+e^{(7)}_4-e^{(7)}_5\big) - g(z_F)\ri)+\l_{8}(z_F, z_S) \lf(g\big(z_F+e^{(7)}_1\big) - g(z_F)\ri)\\
& + \l_{-8}(z_F, z_S) \lf(g\big(z_F-e^{(7)}_1\big) - g(z_F)\ri),
\end{aligned}
\end{align}
and the martingale $\nmart_{F,g}$ is given by
\begin{align*}
&\nmart_{F,g}(t)=\ \int_{[0,t]\times [0,\infty)} \lf(g\big(\nZ_F(s-)-e^{(7)}_2+e^{(7)}_4\big) - g(\nZ_F(s-))\ri)\indic_{[0,n\l_1(\nZ_F(s-), \nZ_S(s-))]}(u) \tilde \xi_{1}(du\times ds)\\
&  \int_{[0,t]\times [0,\infty)} \lf(g\big(nZ_F(s-)+e^{(7)}_2-e^{(7)}_4\big)  - g(\nZ_F(s-))\ri)\indic_{[0,n\l_{-1}(\nZ_F(s-), \nZ_S(s-))]}(u) \tilde \xi_{-1}(du\times ds)\\
&\ \int_{[0,t]\times [0,\infty)} \lf(g\big(\nZ_F(s-)+e^{(7)}_2-e^{(7)}_4\big)  - g(\nZ_F(s-))\ri)\indic_{[0,n\l_{2}(\nZ_F(s-), \nZ_S(s-))]}(u) \tilde \xi_{2}(du\times ds)\\
&\ \int_{[0,t]\times [0,\infty)} \lf(g\big(\nZ_F(s-)-e^{(7)}_3+e^{(7)}_5\big)  - g(\nZ_F(s-))\ri)\indic_{[0,n\l_{3}(\nZ_F(s-), \nZ_S(s-))]}(u) \tilde \xi_{3}(du\times ds)\\
&\ \int_{[0,t]\times [0,\infty)} \lf(g\big(\nZ_F(s-)+e^{(7)}_3-e^{(7)}_5\big)  - g(\nZ_F(s-))\ri)\indic_{[0,n\l_{-3}(\nZ_F(s-), \nZ_S(s-))]}(u) \tilde \xi_{-3}(du\times ds)\\
&\ \int_{[0,t]\times [0,\infty)} \lf(g\big(\nZ_F(s-)+e^{(7)}_3-e^{(7)}_5\big)  - g(\nZ_F(s-))\ri)\indic_{[0,n\l_{4}(\nZ_F(s-), \nZ_S(s-))]}(u) \tilde \xi_{4}(du\times ds)\\
&\ \int_{[0,t]\times [0,\infty)} \lf(g\big(\nZ_F(s-)-e^{(7)}_6+e^{(7)}_7\big)  - g(\nZ_F(s-))\ri)\indic_{[0,n\l_{5}(\nZ_F(s-), \nZ_S(s-))]}(u) \tilde \xi_{5}(du\times ds)\\
&\ \int_{[0,t]\times [0,\infty)} \lf(g\big(\nZ_F(s-)+e^{(7)}_6-e^{(7)}_7\big)  - g(\nZ_F(s-))\ri)\indic_{[0,n\l_{-5}(\nZ_F(s-), \nZ_S(s-))]}(u) \tilde \xi_{-5}(du\times ds)\\
&\ \int_{[0,t]\times [0,\infty)} \lf(g\big(\nZ_F(s-)+e^{(7)}_6-e^{(7)}_7\big)  - g(\nZ_F(s-))\ri)\indic_{[0,n\l_{6}(\nZ_F(s-), \nZ_S(s-))]}(u) \tilde \xi_{6}(du\times ds)\\
&\ \int_{[0,t]\times [0,\infty)} \lf(g\big(\nZ_F(s-)-e^{(7)}_1-e^{(7)}_2+e^{(7)}_3\big)  - g(\nZ_F(s-))\ri)\indic_{[0,n^{1/2}\l_{7}(\nZ_F(s-), \nZ_S(s-))]}(u) \tilde \xi_{7}(du\times ds)\\
&\ \int_{[0,t]\times [0,\infty)} \lf(g\big(\nZ_F(s-)+e^{(7)}_1+e^{(7)}_2-e^{(7)}_3\big)  - g(\nZ_F(s-))\ri)\indic_{[0,n^{1/2}\l_{-7}(\nZ_F(s-), \nZ_S(s-))]}(u) \tilde \xi_{-7}(du\times ds)\\
&\ \int_{[0,t]\times [0,\infty)} \lf(g\big(\nZ_F(s-)-e^{(7)}_1-e^{(7)}_4+e^{(7)}_5\big)  - g(\nZ_F(s-))\ri)\indic_{[0,n^{1/2}\l_{7'}(\nZ_F(s-), \nZ_S(s-))]}(u) \tilde \xi_{7'}(du\times ds)\\
&\ \int_{[0,t]\times [0,\infty)} \lf(g\big(\nZ_F(s-)+e^{(7)}_1+e^{(7)}_4-e^{(7)}_5\big)  - g(\nZ_F(s-))\ri)\indic_{[0,n^{1/2}\l_{-7'}(\nZ_F(s-), \nZ_S(s-))]}(u) \tilde \xi_{-7'}(du\times ds)\\
&\ \int_{[0,t]\times [0,\infty)} \lf(g\big(\nZ_F(s-)+e^{(7)}_1\big)  - g(\nZ_F(s-))\ri)\indic_{[0,n\l_{8}(\nZ_F(s-), \nZ_S(s-))]}(u) \tilde \xi_{8}(du\times ds)\\
&\ \int_{[0,t]\times [0,\infty)} \lf(g\big(\nZ_F(s-)-e^{(7)}_1\big)  - g(\nZ_F(s-))\ri)\indic_{[0,n\l_{-8}(\nZ_F(s-), \nZ_S(s-))]}(u) \tilde \xi_{-8}(du\times ds).
\end{align*} 
By \Cref{lem:conv-occ-ZS}, 
\begin{align}\label{eq:fgen-conv}
 \int_0^t \fgen^{(n)}_{\nZ_S(s)}g(\nZ_F(s))\ ds \nrt \int_{[0,t]\times[0,\infty)} \fgen_{Z_S(s)}g(z_F)\ \occ_F(dz_F \times ds).
\end{align}
Also notice that by \Cref{cor:phi-prelim-int-tight}, $\<n^{-1}\nmart_{F,g}\>_T =  n^{-2} \<\nmart_{F,g}\>_T \rt 0$ as $n \rt \infty$. Therefore by \Cref{lem:sup-mart-tight}, $n^{-1}\sup_{t\leq T}|\nmart_{F,g}(t)| \rt 0$ as $n\rt \infty$. Moreover, because of the assumption on $g$ in \eqref{eq:gen-testfn-growth}, \Cref{assum:main} and \Cref{rem:A3-conv-0}, $\sup_{t\leq T}n^{-1}\lf|g(\nZ_F(t))- g(\nZ_F(0))\ri| \prt 0$ as $n \rt \infty$. It follows from \eqref{eq:fgen-prelim} and \eqref{eq:fgen-conv} that 
\begin{align}\label{eq:occ-meas-char}
\int_{[0,t]\times[0,\infty)} \fgen_{Z_S(s)}g(z_F)\ \occ_F(dz_F \times ds)=0.
\end{align}
Since $\nocc_F([0,\infty)^7\times [0,t]) =t$ for any $n\geq 1$ and $t>0$, $\occ_F([0,\infty)^7\times [0,t]) =t$. Therefore, we can split $\occ_F$ as $\occ_F(dz_F\times ds) \equiv \occ_F^{(2|1)}(dz_F|s) ds$. It is now clear from \eqref{eq:occ-meas-char} that for a.a. $s$, $\occ_F^{(2|1)}(dz_F|s)$ is an invariant distribution of $\fgen_{Z_S(s)}$; that is, $\occ_F^{(2|1)}(dz_F|s) = \inv_{Z_S(s)}$, where for a fixed $z_S \in [0,\infty)^3$, $\inv_{z_S}$ solves
\begin{align}
\label{eq:stat-dist-char}
\int_{[0,\infty)^7} \fgen_{z_S}g(z_F) \inv_{z_S}(dz_F)=0
\end{align}
with $g$ satisfying \eqref{eq:gen-testfn-growth}.
%It follows that $\occ_F^{(2|1)}(dz_F|s) = \inv_{Z_S(s)}$, where for $z_S \in [0,\infty)^3$, $\inv_{z_S}$ is an invariant distribution of the generator $ \fgen_{z_S}$. 
Thus $\occ_F$ has the form $\occ_F(dz_F\times ds) \equiv \pi_{Z_S(s)}(dz_F)ds$, with $\pi_{z_S}$ being an invariant distribution of $\fgen_{z_S}$.

Then by \Cref{lem:conv-occ-ZS}, it follows that for each $k$,
\begin{align}\label{eq:conv-int+centrd-react}
		\nmeanreact_k(t) = \int_0^t \l_k(\nZ_F(s), \nZ_S(s))\ ds & \nrt \int_{[0,t]\times [0,\infty)^7} \!\! \! \!  \l_k(z_F, Z_S(s))\pi_{Z_S(s)}(dz_F)ds \equiv \int_0^t \bar \l_k(Z_S(s))ds,
\end{align}
where the averaged intensity functions $\bar \l_k: [0,\infty)^3 \rt [0,\infty)$ are defined as 
\begin{align*}
	\bar \l_k(z_S) = 
	\begin{cases}
		\int_{[0,\infty)^7} \l_k(z_F, z_S) \inv_{z_S}(dz_F),& \quad k \neq 8,\\
		\kappa_8 (z_{A_2}\big)^2,& \quad k=8.
	\end{cases}
\end{align*}
It follows that
\begin{align}
	\label{eq:centrd-react-conv}
	\begin{aligned}
	\<n^{-1}\tnR_k\>_T &\nrt 0, 
	&& k  = 0, \pm 1, 2, \pm 3, 4, \pm 5, 6, \pm 8, \\
	\<n^{-1/2}\tnR_k\>_T &\nrt 0, 
	&& k = \pm 7, \pm 7'.
	\end{aligned}
\end{align}
and thus from \eqref{eq:react-split}
\begin{align}
	\label{eq:react-conv}
	\begin{aligned}
		n^{-1}\nR_k(t) &\nrt  \int_0^t \bar \l_k(Z_S(s))ds, 
		&& k  = 0, \pm 1, 2, \pm 3, 4, \pm 5, 6, \pm 8, \\
		n^{-1/2}\nR_k(t) &\nrt  \int_0^t \bar \l_k(Z_S(s))ds, 
		&& k = \pm 7, \pm 7'.
	\end{aligned}
\end{align}

%Now write the equation of $\nZ_{A_1}$ and $\nZ_{A_2}$ as 
%\begin{align*}
%\nZ_{A_1}(t) &= \nZ_{A_1}(0) + n^{-1}\nmart_{A_1}(t) +  \kappa_0t - \nmeanreact_1(t) + \nmeanreact_{-1}(t) - \nmeanreact_3(t) + \nmeanreact_{-3}(t), \\
%		\nZ_{A_2}(t) &= \nZ_{A_2}(0) + n^{-1}\nmart_{A_2}(t) + \nmeanreact_2(t) + \nmeanreact_4(t) - \nmeanreact_5(t) + \nmeanreact_{-5}(t) - 2\nmeanreact_8(t) + 2\nmeanreact_{-8}(t), \\
%\end{align*}
%where the martingales $\nmart_{A_1}$ and $\nmart_{A_2}$ are given by
%\begin{align*}
%\nmart_{A_1}(t)= &\ \tnR_0(t) - \tnR_1(t) + \tnR_{-1}(t) - \tnR_3(t) + \tnR_{-3}(t)\\
%\nmart_{A_2}(t)= &\ \tnR_2(t) + \tnR_4(t) - \tnR_5(t) + \tnR_{-5}(t) - 2\tnR_8(t) + 2\tnR_{-8}(t)
%\end{align*}
%with $\tnR_k$ defined in \eqref{eq:react-process-1}.
%
%Thus for $i=1,2$,  $\<n^{-1}\nmart_{A_i}\>_T \nrt 0$  which implies $n^{-1}\sup_{t\leq T}|\nmart_{A_i}| \nrt 0$. 

It immediately follows from the equation for $(\nZ_{A_1}, \nZ_{A_2})$ in \eqref{eq:full_gly_scaled_eq} that the limit point $(Z_{A_1}, Z_{A_2})$ satisfies the ODE
\begin{align}
	\label{eq:limit-pt-ode}
	\begin{aligned}
	Z_{A_1}(t) &= Z_{A_1}(0) +   \kappa_0t - \int_0^t  \bar \l_1(Z_S(s))ds + \int_0^t  \bar \l_{-1}(Z_S(s))ds - \int_0^t  \bar \l_{3}(Z_S(s))ds + \int_0^t  \bar \l_{-3}(Z_S(s))ds, \\
	&= Z_{A_1}(0) +   \kappa_0t -\kappa_{1}\int_0^t Z_{A_1}(s) \mean_{E_1}(Z_S(s))ds+ \kappa_{-1}\int_0^t \mean_{E_1A_1}(Z_S(s))ds\\
	& \hs{0.6cm} - \kappa_{3}\int_0^t   \mean_{E_1^*}(Z_S(s))Z_{A_1}(s)ds +\kappa_{-3} \int_0^t  \mean_{E_1^*A_1}(Z_S(s))ds\\
	Z_{A_2}(t) &= Z_{A_2}(0) +\int_0^t  \bar \l_{2}(Z_S(s))ds+\int_0^t  \bar \l_{4}(Z_S(s))ds -\int_0^t  \bar \l_{5}(Z_S(s))ds+\int_0^t  \bar \l_{-5}(Z_S(s))ds,\\
    & \hs{0.6cm} -2\int_0^t  \bar \l_{8}(Z_S(s))ds+2\int_0^t  \bar \l_{-8}(Z_S(s))ds\\
    &= Z_{A_2}(0) +  \kappa_2\int_0^t  \mean_{E_1A_1}(Z_S(s))ds+ \kappa_4\int_0^t  \mean_{E_1^*A_1}(Z_S(s))ds - \int_0^t  \kappa_5 Z_{A_2}(s)\mean_{E_2}(Z_S(s))ds\\
	& \hs{0.6cm} + \kappa_{-5}\int_0^t  \mean_{E_2A_2}(Z_S(s))ds - 2\kappa_8\int_0^t \big(Z_{A_2}(s)\big)^2ds + 2\kappa_{-8}Z_{A_4}(0)\int_0^t   \mean_{A_3}(Z_S(s))ds, 
	\end{aligned}
\end{align}
where for $z_S$ in the state-space of the process $Z_S$, we define the coordinate-wise means $\mean_{\alpha}(z_S)$ and first order mixed moments  $\cov_{\alpha, \alpha'}(z_S)$ with $\alpha, \alpha' \in F=\{A_3, E_1, E_1^*, E_1A_1, E_1^*A_1, E_2, E_2A_2\}$  as
\begin{align*}
	\mean_{\alpha}(z_S) \dfeq \int_{[0,\infty)^7} z_{\alpha} \inv_{z_S}(dz_F),\quad  \cov_{\alpha,\alpha'}(z_S) \dfeq  \int_{[0,\infty)^7} z_{\alpha}z_{\alpha'} \inv_{z_S}(dz_F), \quad \alpha, \alpha' \in F.
\end{align*}

Now to claim that $\nZ_S \equiv (\nZ_{A_1}, \nZ_{A_2}, \nZ_{A_4})$ converges to $Z_S \equiv (Z_{A_1}, Z_{A_2}, Z_{A_4}(0))$ along the full sequence, we need to show that  $Z_S$ is uniquely defined. However, note that for a given $z_S$ in the state space of $Z_S$, $\fgen_{z_S}$ does not admit a unique invariant distribution. We show that despite this, the $\mean_\alpha(\cdot), \ \alpha \in  F$ have unique closed-form expressions.

We start by showing that the probability measure $\inv_{z_S}$ in a limit point $\occ(dz_F \times ds) = \inv_{Z_S(s)}(dz_F)ds$ satisfies additional constraint besides \eqref{eq:stat-dist-char}. To this end, notice that taking $n\rt \infty$ in \eqref{eq:react7+7'} and using \eqref{eq:conv-int+centrd-react} gives for any $t \in [0,T]$,
\begin{align*}
\kappa_7\int_0^t  \cov_{E_1, A_3}(Z_S(s))+\cov_{E_1A_1, A_3}(Z_S(s))\ ds= \kappa_{-7} \int_0^t \mean_{E_1^*}(Z_S(s))+\mean_{E_1^*A_1}(Z_S(s))\ ds;
\end{align*}
that is, for $z_S$ in the state space of $Z_S$, $\inv_{z_S}$ must be such that
\begin{align}\label{eq:stat-dist-constrnt}
	\kappa_7\lf( \cov_{E_1, A_3}(z_S)+\cov_{E_1A_1, A_3}(z_S)\ri)= \kappa_{-7}  \lf(\mean_{E_1^*}(z_S)+\mean_{E_1^*A_1}(z_S)\ri).
\end{align}
 Furthermore for any $t>0$, integrating both sides of the conservation laws in \eqref{eq:cons_laws} from $[0,t]$ and taking $n\rt \infty$ shows that 
 \begin{align}
        \begin{aligned}
          \mean_{E_1}(z_S)+\mean_{E_1A_1}(z_S)+\mean_{E_1^*}(z_S)+\mean_{E_1^*A_1}(z_S) =& \consconst_1, \\
        \mean_{E_2}(z_S) + \mean_{E_2A_2}(z_S)  =&\ \consconst_2.
        \end{aligned}
        \label{eq:cons_laws-stat}
    \end{align}
 
We now proceed to find the specific expressions for $\mean_{\alpha}(Z_S(s))$  needed for the closed form expressions of the above limiting ODE. To this end we use the fact that for a fixed $z_S \in [0,\infty)^3$ and any $g$ satisfying \eqref{eq:gen-testfn-growth}, \eqref{eq:stat-dist-char}  holds.

{\em Step 1:} Using \eqref{eq:stat-dist-char} with  $g$ given by $g(z_F)=z_{E_2}$  gives the  equation
\begin{align*}
-\kappa_5 \mean_{E_2}(z_S)z_{A_2}+(\kappa_{-5}+\kappa_6)\mean_{E_2A_2}(z_S)=&\ 0.
\end{align*}
Solving this together with the second equation in \eqref{eq:cons_laws-stat} gives
\begin{align}
\label{eq:mean-E2-E2A2}
\mean_{E_2}(z_S) = \f{\consconst_2(\kappa_{-5}+\kappa_6)}{(\kappa_{-5}+\kappa_6)+\kappa_5z_{A_2}}, \quad \mean_{E_2A_2}(z_S) =\f{\kappa_5 \mean_{E_2}(z_S)z_{A_2}}{\kappa_{-5}+\kappa_6} = \f{\consconst_2\kappa_{5}z_{A_2}}{(\kappa_{-5}+\kappa_6)+\kappa_5z_{A_2}}.
\end{align}
Next taking  $g(z_F) = z_{A_3}$ in \eqref{eq:stat-dist-char}, we get $\kappa_8\big(z_{A_2}\big)^2-\kappa_{-8}\mean_{A_3}(z_S)z_{A_4}  =0$, that is,
\begin{align}
\label{eq:mean} 
\mean_{A_3}(z_S) = \f{\kappa_8 \big(z_{A_2}\big)^2 }{\kappa_{-8}z_{A_4}}.
\end{align}
Now    \eqref{eq:stat-dist-char} with $g$ given by $g(z_F) = z_{A_3}(z_{E_1}+z_{E_1A_1})$, together with \eqref{eq:stat-dist-constrnt} and \eqref{eq:cons_laws-stat} gives
\begin{align*}
\kappa_8\lf(z_{A_2}\ri)^2\lf(\mean_{E_1}(z_S)+\mean_{E_1A_1}(z_S)\ri) =&\ \kappa_{-8}z_{A_4}\lf(\cov_{E_1,A_3}(z_S)+\cov_{E_1A_1, A_3}(z_S)\ri)\\
=& \f{\kappa_{-8}\kappa_{-7}}{\kappa_7}z_{A_4}\lf(J_1 - \lf(\mean_{E_1}(z_S)+\mean_{E_1A_1}(z_S)\ri)\ri).
\end{align*}
 Solving the equations, we get
\begin{align*}
\mean_{E_1}(z_S)+\mean_{E_1A_1}(z_S) = \f{\kappa_{-8}\kappa_{-7}\consconst_1 z_{A_4}}{\kappa_{8}\kappa_{7}(z_{A_2})^2+\kappa_{-8}\kappa_{-7}z_{A_4}} = \consconst_1 - \lf(\mean_{E_1^*}(z_S)+\mean_{E_1^*A_1}(z_S)\ri).
\end{align*}
Finally,    \eqref{eq:stat-dist-char} with $g$ given by $g(z_F) = z_{E_1}$ and $g(z_F) = z_{E_1^*}$  respectively give
\begin{align*}
-\kappa_1 \mean_{E_1}(z_S)z_{A_1}+(\kappa_{-1}+\kappa_{2})\mean_{E_1A_1}(z_S)=&\ 0,\\
-\kappa_3 \mean_{E^*_1}(z_S)z_{A_1}+(\kappa_{-3}+\kappa_{4})\mean_{E^*_1A_1}(z_S)=&\ 0.
\end{align*}
We thus get 
\begin{align}
\label{eq:mean-E1-E1*}
\begin{aligned}
\mean_{E_1}(z_S) =&\ \f{(\kappa_{-1}+\kappa_{2})\kappa_{-8}\kappa_{-7}\consconst_1 z_{A_4}}{(\kappa_{-1}+\kappa_{2}+\kappa_{1}z_{A_1})(\kappa_{8}\kappa_{7}(z_{A_2})^2+\kappa_{-8}\kappa_{-7}z_{A_4})},\\
\mean_{E_1A_1}(z_S)=& \f{\kappa_{1}\kappa_{-8}\kappa_{-7}\consconst_1 z_{A_4}z_{A_1}}{(\kappa_{-1}+\kappa_{2}+\kappa_{1}z_{A_1})(\kappa_{8}\kappa_{7}(z_{A_2})^2+\kappa_{-8}\kappa_{-7}z_{A_4})},\\
\mean_{E^*_1}(z_S) =&\ \f{(\kappa_{-3}+\kappa_{4})\kappa_{8}\kappa_{7}\consconst_1(z_{A_2})^2}{(\kappa_{-3}+\kappa_{4}+\kappa_{3}z_{A_1})(\kappa_{8}\kappa_{7}(z_{A_2})^2+\kappa_{-8}\kappa_{-7}z_{A_4})},\\
\mean_{E^*_1A_1}(z_S) =&\ \f{\kappa_{3}\kappa_{8}\kappa_{7}\consconst_1(z_{A_2})^2z_{A_1}}{(\kappa_{-3}+\kappa_{4}+\kappa_{3}z_{A_1})(\kappa_{8}\kappa_{7}(z_{A_2})^2+\kappa_{-8}\kappa_{-7}z_{A_4})}.
\end{aligned}
\end{align}
Plugging in the expressions for $\mean_{\alpha}$, $\alpha \in F$, into \eqref{eq:limit-pt-ode}, we see that it simplifies to the ODE \eqref{eq:FLLN_Glyco}.  Since this ODE admits a unique solution, $Z_S$ is uniquely defined, and the assertion follows.
%Notice that 
%\begin{align}\label{eq:ZV-split}
%            Z^{(n)}_S(t)  =&\ \nPhi_S(t)+ \nmart_St),
%        \end{align}
%where $\nPhi_S(t) =(\nPhi_1(t), \nPhi_2(t), \nPhi_4(t))$ with 
%\begin{align}\label{eq:Z-S-cont}
%\begin{aligned}
%\nPhi_1(t) =&\ Z_{A_1}^N(0)+\kappa_0t-\kappa_1 \int_0^tZ_{E_1}^N(s)Z_{A_1}^N(s)\ ds+\kappa_{-1} \int_0^tZ_{E_1A_1}^N(s)(s)\ ds\\
%&\  -\kappa_3 \int_0^tZ_{E*_1}^N(s)Z_{A_1}^N(s)\ ds +\kappa_{-3} \int_0^tZ_{E*_1}^N(s)Z_{A_1}^N(s)\ ds
%%
%\end{aligned}
%\end{align}

%\end{proof}

\subsection{Discussion of the reduced order ODE system}

\begin{figure}
\centering
\includegraphics[width=\linewidth]{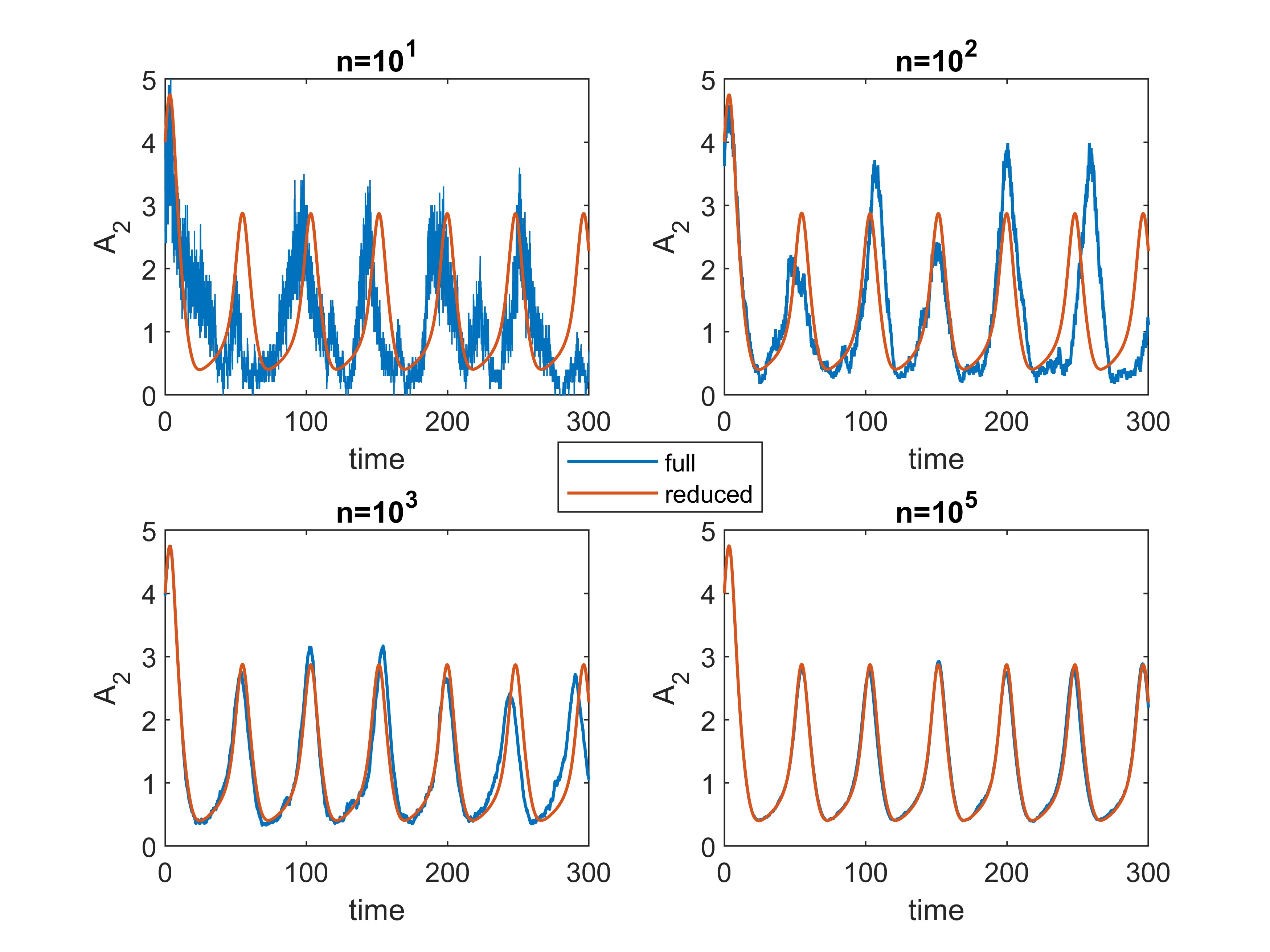}
\includegraphics[width=\linewidth]
{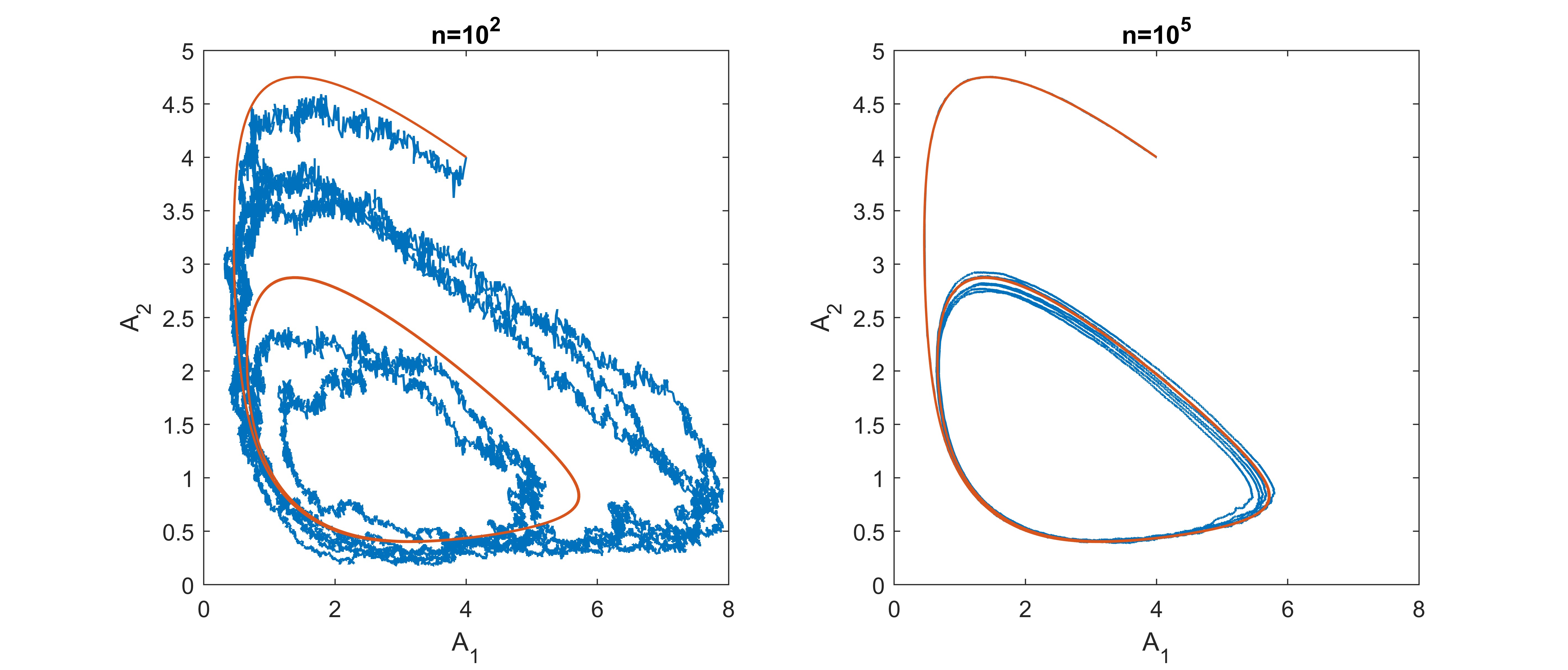}
\caption{Comparison between the behavior of the normalize molecular counts for species $A_2$ in the full CTMC model and the reduced ODE model with varying scaling parameter values $n$.}
\label{fig:stoch_n}
\end{figure}

We compare the normalized abundance of $A_2$ in the full stochastic model with that of the reduced ODE model. The normalized stochastic dynamics follow \eqref{eq:full_gly_scaled_eq}, while the reduced dynamics are governed by \eqref{eq:FLLN_Glyco}-\eqref{eq:rates}.
By Theorem~\ref{thm:FLLN-Glyco}, the scaled stochastic process converges to the reduced ODE solution as $n\to\infty$, implying consistency between two models for large $n$.

Figure \ref{fig:stoch_n} illustrates this convergence as the scaling parameter varies from $n=10$ to $n=10^5$. For $n=10$, the stochastic trajectory of $A_2$ (blue) exhibits large fluctuations and noticeable phase shifts relative to the ODE trajectory (red). However, for $n=10^2$, $10^3$, and $10^5$, the stochastic trajectories closely track the ODE trajectory. 
The bottom panel of Figure \ref{fig:stoch_n} presents phase-plane comparisons for $n=10^2$ and $n=10^5$: the reduced ODE trajectory (red) converges toward the limit cycle over time, while the corresponding stochastic trajectory (blue) oscillates around it. As $n$ increases, the stochastic trajectories remain increasingly close to the ODE limit cycle.

Overall, these results indicate that the reduced ODE model provides an accurate approximation of the full stochastic dynamics for large--and even moderately large--scaling values, with reliable agreement already evident at $n=10^2$.

\section{Parameter estimation}\label{sec:para}
Let $\bm{\kappa} = ( \kappa_0,\kappa_{\pm 1}, \kappa_2, \kappa_{\pm 3}, \kappa_4, \kappa_{\pm 5}, \kappa_6, \kappa_{\pm 7}, \kappa_{\pm 8}) \in (0, \infty)^{ 14}$ be the vector of (scaled) reaction rates of the original (scaled) system $\nZ$ in \eqref{eq:full_gly_scaled_eq}. In the full stochastic description of the glycolytic pathway, the reaction rates $\kappa_i^{(n)}$ govern both the slow and fast reactions. Learning these rates directly from data would in principle characterize the entire biochemical network, but in practice the fast variables are unobserved, and the data consist only of the slow components $(\nZ_{A_1}, \nZ_{A_2})$. Because the full system is high-dimensional and exhibits multiscale stiffness, estimating all $\kappa_i^{(n)}$ from such partial observations is infeasible. The functional law of large numbers (averaging principle) in \Cref{thm:FLLN-Glyco} justifies the reduced-order model \eqref{eq:FLLN_Glyco} as an asymptotically accurate description of the slow dynamics. Consequently, statistical inference can be meaningfully performed at the level of this reduced order model, where the effective parameter vector 
\begin{align}\label{eq:theta-form}
\theta \equiv \mathcal{T}(\bm{\kappa}) \dfeq (\kappa_0, K_1, K_{M_1}, K^\star_{M_1}, K_{M_2}, \consconst^{\bullet}_1, \consconst^{\star}_1, \consconst^{\bullet}_2)
\end{align}
encodes the aggregate influence of the underlying reaction rates. Here, we assumed that $\consconst_1, \consconst_2$ are deterministic, so that $\theta$ is deterministic parameter. Let $\Theta \subset  (0,\infty)^{8}$ be the parameter space for $\theta$. The function $\mathcal{T}: (0,\infty)^{ 14} \rt \Theta $ signifies that the effective parameter $\theta$ is a (deterministic)  dimension-reducing transformation of the full  parameter vector $\bm{\kappa}$. Given a trajectory of the slow components $(\nZ_{A_1}, \nZ_{A_2})$ over the interval $[0,T]$, our goal in this section is to establish the accuracy of the estimator $\hat{\theta}^{(n)}$ computed from the reduced-order model. Mathematically, this amounts to proving the consistency of $\hat{\theta}^{(n)}$ as $n \to \infty$.

We now formalize the mathematical framework for consistency in our case and state the precise result. First, for a fixed $\theta$ having the form \eqref{eq:theta-form}, we denote the solution of the ODE \eqref{eq:FLLN_Glyco} with initial condition $z_{S,0} =(z_{A_1,0}, z_{A_2,0}, z_{A_4,0}) \in [0,\infty)^3$ as 
$$Z_S(z_{S,0}, \theta,\cdot) \equiv \Big(Z_{A_1}(z_{S,0}, \theta, \cdot), Z_{A_2}(z_{S,0}, \theta, \cdot), Z_{A_4}(z_{S,0}, \theta, \cdot)\equiv z_{A_4,0}\Big)$$ 
to highlight its dependence on the parameter $\theta$ and the initial condition $z_{S,0}$.

Next, denote the  probability measure on  $(\Om, \cal{F})$ by $\PP_{\bm{\kappa}}$ to emphasize its dependence on the full parameter vector $\bm{\kappa}$. In other words, under $\PP_{\bm{\kappa}}$, the process $\nZ$ satisfies \eqref{eq:full_gly_scaled_eq} with the reaction rates given by $\bm{\kappa},$ and if under $\PP_{\bm{\kappa}}$ \Cref{assum:main} holds,  \Cref{thm:FLLN-Glyco} guarantees that $\nZ_S$ converges (in distribution) to $Z_S(Z_S(0), \theta,\cdot)$  with $\theta = \cal{T}(\bm{\kappa}).$

\begin{remark}
Even though we assumed that $\consconst_1, \consconst_2$ (limit of the conservation constants in \eqref{eq:cons_laws}) are deterministic, $Z_S(\theta,\cdot)$ can be random (that is, a stochastic process) as the limiting initial values $Z_S(0)$ are assumed to be potentially random. Note that in this case the randomness enters the ODE \eqref{eq:FLLN_Glyco} only through the initial values $Z_S(0)$.
\end{remark}

\begin{definition}\label{def:consistency}
An estimator $\hat \theta^{(n)}$ is consistent for  $\theta^* \in \Theta$, if $\hat \theta^{(n)} \stackrel{\PP_{\bm{\kappa}^*}}\Rt \theta^*$ for any $\bm{\kappa}^* \in \cal{T}^{-1}(\theta^*) = \{\bm{\kappa}^*: \cal{T}(\bm{\kappa}^*) = \theta^*\}$.
\end{definition}
Our data set comprises of $\mathcal{D}^{(n)}_T \equiv \{(\nZ_{A_1}(t), \nZ_{A_2}(t), \nZ_{A_4}(0)): t \in [0,T]\}$.
To estimate $\theta$, we introduce the loss function
\begin{align}
\label{eq:loss-fun-glyco}
\loss^{(n)}_T(\theta \mid \mathcal{D}^{(n)}_T) = \int_0^T \lf|\nZ_{A_1}(t) - Z_{A_1}\lf(\nZ_{S}(0), \theta,t\ri)\ri|^2+\lf|\nZ_{A_2}(t) - Z_{A_2}\lf(\nZ_{S}(0),\theta,t\ri)\ri|^2\ dt,
\end{align}
where $Z_S(\nZ_{S}(0), \theta,\cdot) \equiv \Big(Z_{A_1}(\nZ_{S}(0), \theta, \cdot), Z_{A_2}(\nZ_{S}(0), \theta, \cdot),  \nZ_{A_4}(0)\Big)$ satisfies \eqref{eq:FLLN_Glyco} with initial condition $\nZ_S(0).$
The loss $\loss^{(n)}_T(\theta \mid \mathcal{D}^{(n)}_T)$ quantifies the mismatch between the observed evolution of the slow components of the original system and the  prediction by the reduced-order model across the  time horizon $[0,T]$.  We estimate $\theta$  as
\begin{align}\label{eq:def-est}
\hat \theta^{(n)} \equiv \hat \theta^{(n)}_T(\mathcal{D}^{(n)}_T) = \argmin_{\theta \in \Theta} \loss^{(n)}_T(\theta \mid \mathcal{D}^{(n)}_T).
\end{align}
Thus $\hat \theta^{(n)}$  gives the parameter value for which the reduced-order model best reproduces the observed trajectory. 

Before, we state our consistency result, we need to ensure that the parameter $\theta$ of the ODE system \eqref{eq:FLLN_Glyco} is identifiable in an appropriate sense.  Write $\theta =(\kappa_0, \theta^{(1)}, \theta^{(2)})$ where $\theta^{(1)} = (K_1, K_{M_1}, K^\star_{M_1}, \consconst^{\bullet}_1, \consconst^{\star}_1)$ and $\theta^{(2)}=( K_{M_2}, \consconst^{\bullet}_2)$. Notice that in \Cref{thm:FLLN-Glyco} $f$ depends only on $\theta^{(1)}$ and $g$ only depends on $\theta^{(2)}$. To show the explicit dependence of the functions $f$ and $g$ of \eqref{eq:FLLN-ODE-fun} on the parameter $(\kappa_0, \theta^{(1)}, \theta^{(2)})$, we now write them as $f_{\theta^{(1)}}$ and $g_{\theta^{(2)}}$. 

\begin{remark}\label{rem:fg-lip-bdd} Suppose that $\Theta \subset (0,\infty)^8$ is compact. 
It is easy to see that the mappings $(\theta^{(1)}, z_S=(z_{A_1}, z_{A_2}, z_{A_4})) \rt f_{\theta^{(1)}}(z_S)$ and $(\theta^{(2)},  z_{A_2}) \rt g_{\theta^{(2)}}(z_{A_2})$ are Lipschitz continuous and bounded for $z_{A_4}$ bounded away from zero. Specifically, for $\delta_0>0$, there exist constants $\const, L > 0$ such that
\begin{align*}
 &\sup_{\theta \in \Theta} \lf(\sup_{z_S=(z_{A_1}, z_{A_2}, z_{A_4}) \in [0,\infty)^2\times[\delta_0, \infty)}f_{\theta^{(1)}}(z_S) \vee \sup_{z_{A_2} \in [0,\infty)} g_{\theta^{(2)}}(z_{A_2})\ri) \leq\ \const,\\
 &|f_{\theta^{(1)}}(z_{S}) - f_{\tilde\theta^{(1)}}(\tilde z_{S})|
 \leq\ \  L \lf(\|\theta^{(1)} - \tilde \theta^{(1)}\|_1 +\|z_{S}-\tilde z_{S}\|_1\ri),\\
 &|g_{\theta^{(2)}}(z_{A_2}) - g_{\tilde\theta^{(2)}}(\tilde z_{A_2})| \leq\ L \lf(\|\theta^{(2)} - \tilde \theta^{(2)}\|_1 +\|z_{A_2}-\tilde z_{A_2}\|_1\ri).
\end{align*}
It now easily follows that for some constants $\const' \equiv \const'(z_{S,0}) > 0$
\begin{align}\label{eq:Z-bounds}
\sup_{\theta \in \Theta}\sup_{t\leq T}Z_{A_i}(z_{S,0},\theta,t) \leq \const'(z_{S,0}) T, \quad i=1,2.
\end{align}
Furthermore, by Gronwall's inequality, it follows that the mapping $(z_{S,0},\theta) \in [0,\infty)^2\times[\delta_0, \infty)\times \Theta \rt (Z_{A_1}(z_{S,0}, \theta,t), Z_{A_2}(z_{S,0}, \theta,t))$ is Lipschitz continuous, that is, there exists a  constant $L'(T)$ such that for $z_{S,0}=(z_{A_1,0}, z_{A_2,0}, z_{A_4,0}) \in [0,\infty)^2\times[\delta_0, \infty)$ and $\theta\in \Theta$,
\begin{align}\label{eq:Z-para-initial-Lip}
\sup_{t\leq T}\sum_{i=1}^2|Z_{A_i}(z_{S,0},\theta,t) - Z_{A_i}(\tilde z_{S,0}, \tilde\theta,t)|   \leq L'(T)(\|z_{S,0}-\tilde z_{S,0}\|_1+ \|\theta-\tilde \theta\|_1).
\end{align}

\end{remark}

%\begin{align}
%	\label{eq:FLLN-ODE-fun}
%	\begin{aligned}
%		f_{\theta^{(1)}}(z_{A_1},z_{A_2}, z_{A_4}) &=\frac{1}{K_1z_{A_4}+z_{A_2}^2}\left[\frac{\consconst^{\bullet}_1 K_1z_{A_1}z_{A_4}}{K_{M_1}+z_{A_1}}+\frac{\consconst^\star_1 z_{A_1}z_{A_2}^2}{K_{M_1}^\star+z_{A_1}}\right], \quad  %F_{(\kappa_0, \theta^{(1)})} = \kappa_0 -f_{\theta^{(1)}},
%		g_{\theta^{(2)}}(z_{A_2}) = \frac{\consconst^{\bullet}_2 z_{A_2}}{K_{M_2}+z_{A_2}},
%	\end{aligned}
%\end{align}	

\begin{definition}
Let $\theta = (\kappa_{0}, \theta^{(1)}, \theta^{(2)}) \in \Theta$ and fix $z_{A_4} > 0$.
A set $U \subset [0,\infty)^2$ is said to identify $\theta$,  if
\begin{align}\label{eq:f-g-para-equal} 
\begin{aligned}
		 % F_{(\kappa_{0,0}, \theta^{(1)}_0)}\left(z_{A_1}, z_{A_2}, z_{A_4}\right) \equiv \kappa_{0,0} - f_{\theta^{(1)}_0}\left(z_{A_1}, z_{A_2}, z_{A_4}\right) =&\ \tilde \kappa_0 - f_{\tilde\theta^{(1)}}\left(z_{A_1}, z_{A_2}, z_{A_4}\right)\equiv F_{(\tilde \kappa_{0}, \tilde\theta^{(1)}_0)}\left(z_{A_1}, z_{A_2}, z_{A_4}\right) \\
		  \kappa_{0} - f_{\theta^{(1)}}\left(z_{A_1}, z_{A_2}, z_{A_4}\right) =&\ \tilde \kappa_0 - f_{\tilde\theta^{(1)}}\left(z_{A_1}, z_{A_2}, z_{A_4}\right)\\ f_{\theta^{(1)}}\left(z_{A_1}, z_{A_2}, z_{A_4}\right) - g_{\theta^{(2)}}\left(z_{A_2}\right) =&\ f_{\tilde \theta^{(1)}}\left(z_{A_1}, z_{A_2}, z_{A_4}\right) - g_{\tilde \theta^{(2)}}\left(z_{A_2}\right)
 \end{aligned} 
 \end{align}
for all $(z_{A_1}, z_{A_2}) \in U$ and for some $\tilde\theta = (\tilde\kappa_0, \tilde\theta^{(1)}, \tilde\theta^{(2)}) \in \Theta$
implies $\tilde\theta = \theta$.

\end{definition}	

We now discuss sufficient conditions under which a set $U$ uniquely identifies $\theta$.
It is straightforward to verify, using the properties of analytic functions, that $U$ identifies $\theta$ whenever it has nonempty interior.
However, in our setting $U$  corresponds to the trajectory-set of an ODE and is typically  a curve in $\R^2$, and therefore has empty interior. Below we provide a condition that is easier to check and is typically satisfied by a trajectory of \eqref{eq:FLLN_Glyco}.

We introduce the following notation: for variables $x,y \in [0,\infty)$, define the row vector $V(x,y)$ as
\begin{align}\label{eq:Vand-vec}
V(x,y) \dfeq & \left(
\begin{array}{llllllllllll} % [c] center columns; [r] or [l] also available
1 & y & y^2 & x & xy & xy^2 & x^2 & x^2y & x^2y^2 & x^3 &  x^3 y & x^3y^2
\end{array} 
\right). 
\end{align}

% for column vectors $\bm{u}_{i, d} = (u_{i}(1), u_{i}(2), \hdots, u_{i}(d))^\top \in [0,\infty)^d,  i=1,2$, and exponents $p,q\geq 0$, denote $\bm{u}^p_{1,d}\bm{u}^q_{2,d}$ as the column vector of element-wise products, $u^p_{1}(k)u^q_{2}(k)$; that is, 
%$$\bm{u}^p_{1,d}\bm{u}^q_{2,d} = \lf(u^p_{1}(1)u^q_{2}(1), \hdots, u^p_{1}(d)u_{2}(d)\ri)^\top.$$

\begin{lemma} \label{lem:ident-0}
	Let $z_{A_4} \in (0,\infty)$ be fixed,  and  $\theta = (\kappa_{0}, \theta^{(1)}, \theta^{(2)}) \in \Theta\subset (0,\infty)^8$. Suppose $U \subset [0,\infty)^2$ contains $(0,0)$ and there exists a subset  $\{(z_{A_1}(k), z_{A_2}(k)): k=1,2,\hdots,12\} \subset U $ such that $z_{A_1}(k) > 0, \ k=1,\hdots,12$, and there exist $k_0, \ k'_0$ such that $z_{A_2}(k_0) \neq z_{A_2}(k'_0) > 0$. 
    % Let $\cal{A} = \{\alpha=(i,j):i=0,1,2,3, j=0,1,2\}$ be the ordered set of multi-indices, and 
    Define the $12\times 12$  generalized Vandermonde matrix 
$$\bm{V} \dfeq \begin{pmatrix} \vs*{.1cm}
V(z_{A_1}(1),z^2_{A_2}(1))\\ 
V(z_{A_1}(2),z^2_{A_2}(2))\\ 
\vdots\\
V(z_{A_1}(12),z^2_{A_2}(12))\\
\end{pmatrix}.	
$$
%$\bm{V}=\big[\, \bm{v}_\alpha \,\big]_{\alpha\in\mathcal{A}}$ with the column corresponding to $\alpha = (i,j)$ given by
%	$$\bm{v}_\alpha = \bm{z}^i_{A_1, 12}\bm{z}^{2j}_{A_2, 12} = \Big(z^i_{A_1}(1)z^{2j}_{A_2}(1), \hdots, z^i_{A_1}(12)z^{2j}_{A_2}(12)\Big)^\top , \quad \alpha = (i,j).$$
Assume that $\bm{V}$ is invertible. Then $U$ identifies $\theta$.
\end{lemma}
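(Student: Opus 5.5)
The plan is to peel off the parameters block by block: first $\kappa_0$ using the point $(0,0)\in U$, then $\theta^{(2)}$ using the two distinct positive values of $z_{A_2}$, and finally $\theta^{(1)}$ by turning the equality of the $f$'s into a polynomial identity that the invertibility of $\bm V$ forces to hold identically.

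\emph{Step 1 ($\kappa_0$ and $\theta^{(2)}$).} Suppose \eqref{eq:f-g-para-equal} holds on $U$ for some $\tilde\theta\in\Theta$.

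At $(z_{A_1},z_{A_2})=(0,0)$ we have $f_{\theta^{(1)}}(0,0,z_{A_4})=0=f_{\tilde\theta^{(1)}}(0,0,z_{A_4})$, because every term of $f$ carries a factor $z_{A_1}$. The first equation then gives $\kappa_0=\tilde\kappa_0$, and therefore $f_{\theta^{(1)}}=f_{\tilde\theta^{(1)}}$ on $U$. The second equation now yields $g_{\theta^{(2)}}(z_{A_2})=g_{\tilde\theta^{(2)}}(z_{A_2})$ at the two distinct positive values $y_1=z_{A_2}(k_0)$ and $y_2=z_{A_2}(k_0')$. Dividing by $y_i>0$ and cross-multiplying gives
\[
\consconst^\bullet_2(\tilde K_{M_2}+y_i)=\tilde\consconst^\bullet_2(K_{M_2}+y_i),\qquad i=1,2 .
\]
This is an affine identity in $y$ holding at two distinct points. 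Equating the coefficients of $y$ gives $\consconst^\bullet_2=\tilde\consconst^\bullet_2$, and then the constant terms give $K_{M_2}=\tilde K_{M_2}$.

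\emph{Step 2 (polynomial identity for $\theta^{(1)}$).} Write $x=z_{A_1}$, $w=z_{A_2}^2$ and $a=z_{A_4}>0$. Then
\[
f_{\theta^{(1)}}=\frac{x}{K_1a+w}\Big[\frac{\consconst^\bullet_1K_1a}{K_{M_1}+x}+\frac{\consconst^\star_1 w}{K^\star_{M_1}+x}\Big].
\]
At the twelve points we have $x>0$, so we may divide by $x$ and multiply through by the positive denominators. The equality $f_{\theta^{(1)}}=f_{\tilde\theta^{(1)}}$ then becomes $P(x,w)=0$ at $(z_{A_1}(k),z_{A_2}^2(k))$, $k=1,\dots,12$, where
\begin{align*}
P(x,w)={}&(\tilde K_1a+w)(\tilde K_{M_1}+x)(\tilde K^\star_{M_1}+x)\big[\consconst^\bullet_1K_1a(K^\star_{M_1}+x)+\consconst^\star_1w(K_{M_1}+x)\big]\\
&-(K_1a+w)(K_{M_1}+x)(K^\star_{M_1}+x)\big[\tilde\consconst^\bullet_1\tilde K_1a(\tilde K^\star_{M_1}+x)+\tilde\consconst^\star_1w(\tilde K_{M_1}+x)\big].
\end{align*}
The bookkeeping to check is the degrees: $P$ has degree at most $3$ in $x$ and at most $2$ in $w$. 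Its monomials are therefore exactly the $x^iw^j$, $0\le i\le3$, $0\le j\le2$, i.e.\ the twelve entries of $V(x,w)$ in \eqref{eq:Vand-vec}.

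Writing $P(x,w)=V(x,w)\,c$ for a coefficient vector $c\in\R^{12}$, the twelve vanishing conditions read $\bm V c=0$. Invertibility of $\bm V$ gives $c=0$, so $P\equiv0$. Since the denominators are positive on $[0,\infty)^2$, it follows that $f_{\theta^{(1)}}(x,\sqrt w,a)=f_{\tilde\theta^{(1)}}(x,\sqrt w,a)$ for all $x>0$, $w\ge0$.

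\emph{Step 3 (reading off the parameters).} We extract $\theta^{(1)}$ from the global identity by two limits.

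1. Setting $w=0$ gives
\[
\frac{\consconst^\bullet_1x}{K_{M_1}+x}=\frac{\tilde\consconst^\bullet_1x}{\tilde K_{M_1}+x}\qquad\text{for all } x>0 .
\]
The argument of Step 1 then gives $\consconst^\bullet_1=\tilde\consconst^\bullet_1$ and $K_{M_1}=\tilde K_{M_1}$.

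2. Letting $w\to\infty$, with $f\to \consconst^\star_1x/(K^\star_{M_1}+x)$, gives $\consconst^\star_1=\tilde\consconst^\star_1$ and $K^\star_{M_1}=\tilde K^\star_{M_1}$.

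3. With these known, put $h(x)=\consconst^\bullet_1x/(K_{M_1}+x)$ and $h^\star(x)=\consconst^\star_1x/(K^\star_{M_1}+x)$. Then $f$ is the weighted average
\[
f=\frac{K_1a\,h(x)+w\,h^\star(x)}{K_1a+w}.
\]
Pick $w>0$ and an $x$ with $h(x)\ne h^\star(x)$. Such an $x$ exists unless $h\equiv h^\star$, i.e.\ unless $(\consconst^\bullet_1,K_{M_1})=(\consconst^\star_1,K^\star_{M_1})$. For fixed such $x,w$, the map $K_1\mapsto f$ is strictly monotone, so $K_1=\tilde K_1$.

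\emph{Expected obstacle.} The main difficulty I anticipate is this last degenerate case $h\equiv h^\star$. In it $K_1$ genuinely does not enter $f$, so the definition can only be met if $\Theta$ excludes it, or if one notes that $\consconst^\bullet_1/\consconst^\star_1=\kappa_2/\kappa_4$ is generically $\ne1$. I would handle this with an explicit remark, and would otherwise verify carefully that the degree count in Step 2 is exact, so that the monomial basis matches the columns of $\bm V$.
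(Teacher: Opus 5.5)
Your proof is correct, and I would go further than your ``expected obstacle'': the degenerate case is a missing hypothesis in the lemma itself, not a weakness of your method.

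\textbf{Comparison with the paper.} Your Steps 1 and 2 coincide with the paper's proof. Both get $\kappa_0$ from the point $(0,0)$ and $\theta^{(2)}$ from an affine identity with two distinct positive roots. Both then clear denominators in $f_{\theta^{(1)}}=f_{\tilde\theta^{(1)}}$ to obtain a polynomial in the twelve monomials of $V(x,w)$, and invertibility of $\bm V$ kills its coefficient vector.

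As an aside, invertibility of $\bm V$ already forces at least three distinct values of $z_{A_2}^2$. That gives two distinct positive values of $z_{A_2}$, so the positivity hypothesis you use in Step 1 is automatic.

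The routes diverge only at the end. The paper reads $c=0$ as the coefficient identity $\gamma(\theta^{(1)},\tilde\theta^{(1)})=\gamma(\tilde\theta^{(1)},\theta^{(1)})$ and asserts that ``simple algebra'' gives $\theta^{(1)}=\tilde\theta^{(1)}$. You instead turn $P\equiv 0$ back into the global identity $f_{\theta^{(1)}}\equiv f_{\tilde\theta^{(1)}}$ on $(0,\infty)\times[0,\infty)$. You then read the parameters off the slice $w=0$, the limit $w\to\infty$, and strict monotonicity in $K_1$. Your route is the more transparent one, and it shows exactly what the paper's unexplained algebra needs.

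\textbf{The degenerate case.} Suppose $(\consconst^{\bullet}_1,K_{M_1})=(\consconst^{\star}_1,K^\star_{M_1})$. Then $f_{\theta^{(1)}}=\consconst^{\bullet}_1 z_{A_1}/(K_{M_1}+z_{A_1})$ does not depend on $K_1$. Hence $\gamma(\theta^{(1)},\tilde\theta^{(1)})=\gamma(\tilde\theta^{(1)},\theta^{(1)})$ holds for every $\tilde\theta^{(1)}$ that differs from $\theta^{(1)}$ only in $K_1$, and the paper's algebra cannot conclude either.

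Invertibility of $\bm V$ does not exclude this, since $\bm V$ depends only on the chosen points and not on $\theta$. A box $\Theta$ like the one in the numerical section does contain such degenerate points. So the lemma is false as stated for degenerate $\theta$.

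Do not treat this with a remark or a genericity argument such as $\kappa_2/\kappa_4\neq 1$ ``generically''. Add the hypothesis $(\consconst^{\bullet}_1,K_{M_1})\neq(\consconst^{\star}_1,K^\star_{M_1})$ to the statement. It only needs to be imposed on $\theta$: your $w=0$ and $w\to\infty$ steps force any admissible $\tilde\theta$ to share these four parameters, so $\tilde\theta$ is then non-degenerate as well. With that hypothesis your argument is complete.
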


\begin{proof}
	Let  $\tilde\theta = (\tilde\kappa_0, \tilde\theta^{(1)}, \tilde\theta^{(2)}) \in \Theta$ be such that \eqref{eq:f-g-para-equal} holds for all $(z_{A_1}, z_{A_2}) \in U$. Taking $(z_{A_1}, z_{A_2}) =(0,0)$, we see $ \kappa_{0} = \tilde \kappa_{0}$. This shows for all $(z_{A_1}, z_{A_2}) \in U$
	\begin{align}\label{eq:f-g}
		f_{ \theta^{(1)}}\left(z_{A_1}, z_{A_2}, z_{A_4}\right) = f_{\tilde \theta^{(1)}}\left(z_{A_1}, z_{A_2}, z_{A_4}\right), \quad g_{ \theta^{(2)}}\left(z_{A_2}\right) = g_{\tilde \theta^{(2)}}\left(z_{A_2}\right) 
	\end{align}
For any  $(z_{A_1}, z_{A_2}) \in U$ with $z_{A_2}>0$, cancelling $z_{A_2}$ from both sides of the second equation gives
\begin{align*}
	 \frac{\consconst^{\bullet}_{2}}{K_{M_2}+z_{A_2}} =  \frac{\tilde \consconst^{\bullet}_2}{\tilde K_{M_2}+z_{A_2}}
\end{align*}
which implies
\begin{align*}
\consconst^{\bullet}_{2}\tilde K_{M_2} - \tilde \consconst^{\bullet}_2K_{M_2}     + (\consconst^{\bullet}_{2} - \tilde \consconst^{\bullet}_2)z_{A_2}=0.
\end{align*}
By the hypothesis this linear equation has two distinct solutions $z_{A_2}(k_0), z_{A_2}(k'_0)$. Therefore, it must be identically $0$, which readily implies that
$\theta^{(2)} \equiv (\consconst^{\bullet}_{2},  K_{M_2}) =  \tilde \theta^{(2)} \equiv (\tilde \consconst^{\bullet}_2, \tilde K_{M_2})$.

Next, for any  $(z_{A_1}, z_{A_2}) \in U$ with $z_{A_1}>$0, cancelling $z_{A_1}$ from the both sides of the first equation of \eqref{eq:f-g} gives
\begin{align}\label{eq:f-mult-poly}
Q_{\theta^{(1)}, \tilde \theta^{(1)}}(z_{A_1}, z^2_{A_2}) = Q_{\tilde \theta^{(1)}, { \theta^{(1)}}}(z_{A_1}, z^2_{A_2}),
\end{align}
where for $\theta^{(1)} = (K_1, K_{M_1}, K^\star_{M_1}, \consconst^{\bullet}_1, \consconst^{\star}_1)$, $\tilde \theta^{(1)} = (\tilde K_1, \tilde K_{M_1}, \tilde K^\star_{M_1}, \tilde \consconst^{\bullet}_1, \tilde \consconst^{\star}_1)$ and the fixed $z_{A_4}>0$ the polynomial $Q_{\theta^{(1)}, \tilde \theta^{(1)}}(x,y) \dfeq V(x,y)\gamma(\theta^{(1)}, \tilde \theta^{(1)})$
with the row vector $V(x,y)$ given by \eqref{eq:Vand-vec}
and the column vector $\gamma( \theta^{(1)}, \tilde \theta^{(1)} )$ given by
\begin{align*}
	\gamma(\theta^{(1)}, \tilde \theta^{(1)}) &\dfeq\ \Bigg(J_1^\bullet K_1 K^\star_{M_1}\,\tilde K_1\,\tilde K_{M_1}\,\tilde K^\star_{M_1}z^2_{A_4}, \ \tilde K_{M_1}\,\tilde K^\star_{M_1}\,\big(J_1^\bullet K_1 K^\star_{M_1} + J_1^\star K_{M_1}\,\tilde K_1\big)\, z_{A_4}, \\
	&  J_1^\star K_{M_1}\,\tilde K_{M_1}\,\tilde K^\star_{M_1}, \ J_1^\bullet K_1 \tilde K_1\,(K^\star_{M_1}\,\tilde K_{M_1} + K^\star_{M_1}\,\tilde K^\star_{M_1} + \tilde K_{M_1}\,\tilde K^\star_{M_1})\, z^2_{A_4}, \\
	&\  \Big(J_1^\bullet K_1 K^\star_{M_1}\,\tilde K_{M_1}
	+ J_1^\bullet K_1 K^\star_{M_1}\,\tilde K^\star_{M_1}
	+ J_1^\bullet K_1\,\tilde K_{M_1}\,\tilde K^\star_{M_1}	+ J_1^\star K_{M_1}\,\tilde K_1\,\tilde K_{M_1}
	+ J_1^\star K_{M_1}\,\tilde K_1\,\tilde K^\star_{M_1}\\
&\	+ J_1^\star\,\tilde K_1\,\tilde K_{M_1}\,\tilde K^\star_{M_1}
	\Big)\,z_{A_4}, \  J_1^\star\,(K_{M_1}\,\tilde K_{M_1} + K_{M_1}\,\tilde K^\star_{M_1} + \tilde K_{M_1}\,\tilde K^\star_{M_1}), \\
	&\ J_1^\bullet K_1 \tilde K_1\,(K^\star_{M_1} + \tilde K_{M_1} + \tilde K^\star_{M_1})\,z^2_{A_4}, \\
	&\  \Big(
	J_1^\bullet K_1 K^\star_{M_1}
	+ J_1^\bullet K_1\,\tilde K_{M_1}
	+ J_1^\bullet K_1\,\tilde K^\star_{M_1}
	+ J_1^\star K_{M_1}\,\tilde K_1
	+ J_1^\star\,\tilde K_1\,\tilde K_{M_1}
	+ J_1^\star\,\tilde K_1\,\tilde K^\star_{M_1}
	\Big)\,z_{A_4},\\
	&\ J_1^\star\,(K_{M_1} + \tilde K_{M_1} + \tilde K^\star_{M_1}), \  J_1^\bullet K_1 \tilde K_1 \,z^2_{A_4}, \  (J_1^\bullet K_1 + J_1^\star \tilde K_1)\,z_{A_4}, \  J_1^\star \Bigg)^\top.
\end{align*}
Applying  \eqref{eq:f-mult-poly} for each $(z_{A_1}(k), z_{A_2}(k)) \in U$ gives the equation
\begin{align*}
	\bm{V} \gamma(\theta^{(1)}, \tilde \theta^{(1)}) = \bm{V} \gamma(\tilde\theta^{(1)}, \theta^{(1)}).
\end{align*}
Since $\bm{V}$ is assumed to be invertible, we get $ \gamma(\theta^{(1)}, \tilde \theta^{(1)}) =  \gamma(\tilde\theta^{(1)}, \theta^{(1)})$, and simple algebra shows that $\theta^{(1)} = \tilde \theta^{(1)}$.

\end{proof}

\begin{remark} Note that if $U$ contains twelve distinct points, the (generalized) Vandermonde matrix $\bm{V}$ formed from these points is almost always invertible. The condition that $(0,0) \in U$ is not essential and can be omitted, provided a higher-dimensional Vandermonde matrix is invertible. On the other hand, if some of the parameters of the reduced model are known, invertibility of a lower-dimensional Vandermonde matrix in \Cref{lem:ident-0} is sufficient for $U$ to identify the remaining parameters.
\end{remark}

\np
We are now ready to state the main result of this section.

\begin{theorem}\label{th:consistency} Let  $\Theta$ be compact, and fix $\theta^* \in \Theta$. Suppose \Cref{assum:main} holds under $\PP_{\bm{\kappa}^*}$ for any $\bm{\kappa}^* \in \cal{T}^{-1}(\theta^*)$ with $\consconst_1, \consconst_2$  deterministic, and $\nZ_{A_4}(0)\geq \delta_0>0$, where $\delta_0$ is as in  \Cref{assum:main}:\ref{item:initial-other}.
Assume that  the trajectory (orbit) of $(Z_{A_1}(\theta^*, \cdot), Z_{A_2}(\theta^*, \cdot))$ up to time $T$, 
\[
\mathcal{O}_T \dfeq \{(Z_{A_1}(Z_S(0), \theta^*, t), Z_{A_2}(Z_S(0),\theta^*, t)): t \in [0,T]\},
\]
identifies $\theta^*$, almost surely, with respect to $\PP_{\bm{\kappa}^*}$ for any $\bm{\kappa}^* \in \cal{T}^{-1}(\theta^*)$. 
Then the estimator $\hat{\theta}^{(n)}$, defined by \eqref{eq:def-est}, is consistent for $\theta^*$.
\end{theorem}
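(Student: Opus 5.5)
We would follow the standard M-estimation (argmin) consistency route, using \Cref{thm:FLLN-Glyco} to obtain the uniform limit of the random loss. Fix $\bm{\kappa}^*\in\cal{T}^{-1}(\theta^*)$ and work under $\PP_{\bm{\kappa}^*}$. Define the limiting (random) loss
\[
\loss_T(\theta)\dfeq \int_0^T \sum_{i=1}^2\big|Z_{A_i}(Z_S(0),\theta^*,t)-Z_{A_i}(Z_S(0),\theta,t)\big|^2\,dt .
\]
Since $\consconst_1,\consconst_2$ are deterministic, the only randomness is through $Z_S(0)$. We then proceed in three steps.

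First, we show $\sup_{\theta\in\Theta}|\loss^{(n)}_T(\theta)-\loss_T(\theta)|\prt 0$. By \Cref{thm:FLLN-Glyco}, $\nZ_S\RT Z_S(Z_S(0),\theta^*,\cdot)$. Because $Z_S(0)$ may be random, we cannot simply pass to convergence in probability. Instead we use Skorokhod's representation jointly for $(\nZ_S,\nZ_S(0))\RT (Z_S,Z_S(0))$, noting that $\hat\theta^{(n)}$ is a measurable functional of the data, so its law is unchanged. Since the limit $Z_S$ is a continuous functional of $Z_S(0)$, one can in fact upgrade to $\sup_{t\le T}\|\nZ_S(t)-Z_S(\nZ_S(0),\theta^*,t)\|\prt0$: the joint convergence of $(\nZ_S,\nZ_S(0))$ identifies the limit as the graph of the continuous map $z_{S,0}\mapsto Z_S(z_{S,0},\theta^*,\cdot)$. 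We then write $a=\nZ_{A_i}(t)-Z_{A_i}(\nZ_S(0),\theta^*,t)$ and $b=Z_{A_i}(\nZ_S(0),\theta^*,t)-Z_{A_i}(\nZ_S(0),\theta,t)$, expand $|a+b|^2$, and bound the cross term by $2\sup|a|\cdot\sup|b|$. Here $\sup_\theta\sup_t|b|$ is tight by \eqref{eq:Z-bounds}, and $\nZ_{A_4}(0)\ge\delta_0$ lets us apply \Cref{rem:fg-lip-bdd}. The Lipschitz bound \eqref{eq:Z-para-initial-Lip} together with $\nZ_S(0)\prt Z_S(0)$ replaces $\nZ_S(0)$ by $Z_S(0)$ uniformly in $\theta$. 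This gives uniform convergence in probability of $\loss^{(n)}_T$ to $\loss_T$, up to an additive term not depending on $\theta$ that does not affect the argmin.

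Second, we establish identifiability of the limit loss. Clearly $\loss_T(\theta^*)=0$, and $\theta\mapsto\loss_T(\theta)$ is continuous by \eqref{eq:Z-para-initial-Lip}. Suppose $\loss_T(\theta)=0$. Then by continuity in $t$ the two trajectories coincide on $[0,T]$, so their derivatives coincide, and \eqref{eq:FLLN_Glyco} gives \eqref{eq:f-g-para-equal} at every point of $\mathcal{O}_T$ with $z_{A_4}=Z_{A_4}(0)$. Since $\mathcal{O}_T$ identifies $\theta^*$ a.s., $\theta=\theta^*$. Compactness of $\Theta$ and continuity then give, for each $\ep>0$, a well-separated minimum $\eta_\ep(\om)\dfeq\inf_{\|\theta-\theta^*\|\ge\ep}\loss_T(\theta)>0$ a.s. This quantity is random but a.s. positive, which suffices.

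Third, we conclude with the usual argmin argument. On the event $\{\|\hat\theta^{(n)}-\theta^*\|\ge\ep\}$ we have
\[
\eta_\ep\le \loss_T(\hat\theta^{(n)})\le \loss^{(n)}_T(\hat\theta^{(n)})+\Delta_n\le \loss^{(n)}_T(\theta^*)+\Delta_n\le 2\Delta_n ,
\]
where $\Delta_n=\sup_\theta|\loss^{(n)}_T-\loss_T|$. Hence $\PP(\|\hat\theta^{(n)}-\theta^*\|\ge\ep)\le\PP(\eta_\ep\le2\Delta_n)\to0$, using $\Delta_n\prt0$ and $\eta_\ep>0$ a.s. If minimizers are not unique, the same bound applies to any measurable selection.

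The main obstacle is the first step: handling a random initial condition $Z_S(0)$, for which \Cref{thm:FLLN-Glyco} gives only convergence in distribution, while the loss couples the data path with an ODE started at $\nZ_S(0)$. We would resolve this by proving the pathwise statement $\sup_t\|\nZ_S(t)-Z_S(\nZ_S(0),\theta^*,t)\|\prt0$ via the joint-convergence argument above. A secondary technical point is the measurability of $\eta_\ep$, which follows because the infimum can be taken over a countable dense subset of $\Theta$ by continuity.
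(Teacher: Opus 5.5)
Your proposal is correct and has the same three-step structure as the paper's proof. First, the loss $\loss^{(n)}_T$ converges uniformly in $\theta$ to the limiting loss, via \eqref{eq:Z-bounds} and \eqref{eq:Z-para-initial-Lip}. Second, this limiting loss vanishes only at $\theta^*$ because the orbit $\mathcal{O}_T$ identifies $\theta^*$. Third, an argmin argument over compact $\Theta$ concludes. The differences are only in execution: the paper works almost surely on a Skorokhod representation and finishes with an $\omega$-by-$\omega$ compactness/subsequence argument, whereas you work in probability and use the well-separated-minimum bound $\eta_\varepsilon\le 2\Delta_n$. These are interchangeable standard forms of the same argument.
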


\begin{proof}[Proof of \Cref{th:consistency}] By \Cref{thm:FLLN-Glyco}, under $\PP_{\bm{\kappa}^*}$, as $n\rt \infty$, $\nZ_S \RT Z_S(\theta^*, \cdot)$ and  by the Skorokhod representation theorem, we can assume for the purpose of this proof, that $\nZ_S \rt Z_S(\theta^*, \cdot)$ a.s.;  that is,  $\PP_{\bm{\kappa}^*}$ - a.s.
\begin{align}
\label{eq:conv-Ai-unif}
|\nZ_{A_4}(0) - Z_{A_4}(0)| \nrt 0, \quad \sup_{t\leq T}\lf|\nZ_{A_i}(t) - Z_{A_i}\lf(Z_S(0),\theta^*,t\ri)\ri| \nrt 0, \quad i=1,2.
\end{align}
% It is easy to see from the Lipschitz continuity of $f_{\theta^{(1)}}, g_{\theta^{(2)}}$ (see \Cref{rem:fg-lip-bdd}) and Gronwall's inequality that  the mapping $z_{S,0} \in [0,\infty)^3 \rt Z_{S}(z_{S,0},\theta,\cdot) \in C([0,T], \R^3)$ is Lipschitz continuous with Lipschitz constant independent of $\theta \in \Theta$. 
By \eqref{eq:Z-para-initial-Lip}, it follows that
\begin{align}
\label{eq:conv-unif-ode-initial}
\sup_{\theta \in \Theta}\sup_{t\leq T}\lf|Z_{A_i}\lf(\nZ_S(0),\theta, t\ri) - Z_{A_i}\lf(Z_S(0),\theta,t\ri)\ri| \rt 0, \quad i=1,2.
\end{align}

\np
{\em Step 1:} We show that as $n\rt \infty$, $\loss^{(n)}_T(\cdot \mid \mathcal{D}^{(n)}_T) \nrt \loss_T(\cdot, \theta^*)$ in   $C(\Theta, [0,\infty))$ a.s.; that is,  $\PP_{\bm{\kappa}^*}$ - a.s.,
\begin{align}\label{eq:loss-conv}
\sup_{\theta \in \Theta} \lf| \loss^{(n)}_T(\theta \mid \mathcal{D}^{(n)}_T) - \loss_T(\theta, \theta^*)\ri|\nrt 0, 
\end{align}
where the limiting (random) function $\loss_T(\cdot, \theta^*) \in C((0,\infty)^8, [0,\infty))$ is defined by
\begin{align}
\label{eq:loss-det}
\loss_T(\theta, \theta^*) \dfeq \int_0^T |Z_{A_1}(Z_S(0),\theta^*,t) - Z_{A_1}(Z_S(0),\theta,t)|^2+|Z_{A_2}(Z_S(0),\theta^*,t) - Z_{A_2}(Z_S(0),\theta,t)|^2\ dt .
\end{align}
It easily follows from \Cref{rem:fg-lip-bdd} that the mapping $\theta \rt \loss_T(\theta, \theta^*)$ is Lipschitz continuous. Also note that because of \eqref{eq:conv-Ai-unif},  $\limsup_{n \rt \infty}\sup_{t\leq T}\nZ_{A_i}(t) < \infty,$ $\PP_{\bm{\kappa}^*}$ - a.s.
It now follows from \Cref{lem:aux-tight}:\ref{item:react-tight-2}, \eqref{eq:conv-Ai-unif}, \eqref{eq:conv-unif-ode-initial} and \eqref{eq:Z-bounds} that under $\PP_{\bm{\kappa}^*}$
\begin{align*}
\sup_{\theta \in \Theta}\Bigg|\int_0^T& \Big( \lf|\nZ_{A_i}(t) -  Z_{A_i}\lf(\nZ_S(0),\theta,t\ri)\ri|^2  - \lf|Z_{A_i}(Z_S(0),\theta^*, t) - Z_{A_i}\lf(Z_S(0),\theta,t\ri)\ri|^2\Big)\ dt \Bigg|\\
 &\leq\   \lf( \sup_{t\leq T}\nZ_{A_i}(t)
+ \sup_{\theta \in \Theta}\sup_{t\leq T}\lf(Z_{A_i}(Z_S(0),\theta,t)+Z_{A_i}\lf(\nZ_S(0),\theta,t\ri)\ri) + \sup_{t\leq T}Z_{A_i}(Z_S(0), \theta^*,t)\ri)  \\
& \hs*{.5cm} \times \lf(\sup_{t\leq T}\lf|\nZ_{A_i}(t)  - Z_{A_i}(Z_S(0),\theta^*,t)\ri|+\sup_{\theta \in \Theta}\sup_{t\leq T}|Z_{A_i}(\nZ_S(0),\theta, t) - Z_{A_i}(Z_S(0),\theta,t)|\ri) T\\
&\nrt \  0, \quad a.s.,
\end{align*}
which readily implies \eqref{eq:loss-conv}.

{\em Step 2:} Let $\om$ be such that \eqref{eq:conv-Ai-unif} holds, and $\cal{O}_T \equiv \cal{O}_T(\om)$ identifies $\theta^*$. We now observe that $\loss_T(\cdot, \theta^*)(\om)$ defined in \eqref{eq:loss-det} has a unique minimum at $\theta^*$. For  convenience, we now suppress the fixed $\om$ in the notation. 

It is clear that $\theta^*$ is a minimizer, $ \loss_T(\theta^*, \theta^*)=0$. Now suppose there exists a $\tilde\theta$ such that 
\begin{align}
\label{eq:loss-min}
\loss_T(\tilde\theta, \theta^*)=0.
\end{align}
 We show that $\tilde\theta = \theta^*.$ To this end, notice that \eqref{eq:loss-min} and continuity of the mappings $t \in [0,T] \rt Z_{A_i}(Z_S(0),\theta, t), i=1,2, \theta \in (0,\infty)$ imply 
 \begin{align} \label{eq:Z-para-traj}
 Z_{A_1}(\theta^*, \cdot) \equiv  Z_{A_1}(\tilde\theta, \cdot), \quad  Z_{A_2}(\theta^*, \cdot) \equiv  Z_{A_2}(\tilde\theta, \cdot), \quad \text{ on } \ [0,T].
 \end{align}
 For notational convenience, we suppress the dependence on the initial position $Z_S(0)$ in $Z_{A_i}$ in the above equation.
 But because of continuity of the mappings $(z_{A_1}, z_{A_2}) \rt f_\theta(z_{A_1}, z_{A_2}, z_{A_4})$ ($z_{A_4}$ fixed) and $z_{A_2} \rt g_\theta(z_{A_2})$, \eqref{eq:Z-para-traj} in turn shows that \eqref{eq:f-g-para-equal} holds  with $z_{A_4} = Z_{A_4}(0)$ and any $(z_{A_1}, z_{A_2})$ in  
\begin{equation*}
  \cal{O}_T \equiv\  \lf\{(Z_{A_1}(\theta^*, t), Z_{A_2}(\theta^*, t)): t \in [0,T]\ri\}
  =\   \{(Z_{A_1}(\tilde\theta, t), Z_{A_2}(\tilde\theta, t)): t \in [0,T]\},
 \end{equation*} 
where the second equality is also because of \eqref{eq:Z-para-traj}.  Since $\cal{O}_T \equiv \cal{O}_T(\om)$ identifies $\theta^*$ by hypothesis,  $\theta^*=\tilde\theta$.

{\em Step 3:} We finally show that $\hat \theta^{(n)} \nrt \theta^*$,  $\PP_{\bm{\kappa}^*}$-a.s. For this we present an $\om$ by $\om$ argument. Let $\Om_0$ be such that $\PP_{\bm{\kappa}^*}(\Om_0)=1$ and for all $\om \in \Om_0$, (a) the convergence in \eqref{eq:loss-conv} holds, and (b) $\cal{O}_T(\om)$ identifies $\theta^*$. 

 Now fix an $\om_0 \in \Om_0.$ 
 Since $\Theta$ is compact, the sequence $\{\hat \theta^{(n)}(\om_0)\}$  has a limit point $\theta' $, that is, there exists a subsequence $\{n_k \equiv n_k(\om_0)\}$ such that $\hat \theta^{(n_k)}(\om_0) \stackrel{k\rt \infty}\longrightarrow \theta'(\om_0)$. Note that, at this stage, we do not \emph{ apriori} know whether $\theta'(\om_0)$ is deterministic.  We will actually show that $\theta'(\om_0) \equiv \theta^* = \argmin_\theta  \loss_T(\theta, \theta^*)$. Since  $\theta^* $ is the unique minimizer of  $\loss_T(\cdot, \theta^*)$ by Step 2,  the limit is independent of the subsequence (and obviously deterministic); hence, $\hat \theta^{(n)}(\om_0)\nrt \theta^*$ along the full sequence.  We now work toward establishing this.
 
For notational simplicity, we suppress the dependence of $n_k$ on $\om_0$, while keeping it explicit elsewhere.
The  continuity of the function $\loss_T(\cdot, \theta^*)(\om_0): \Theta \rt [0,\infty)$  implies that 
\begin{align}\label{eq:loss-est-conv}
\loss_T\lf(\hat\theta^{(n_k)}(\om_0), \theta^*\ri)(\om_0) \stackrel{k\rt \infty} \Rt \loss_T(\theta'(\om_0),\theta^*)(\om_0).
\end{align}
Now write 
\begin{align*}
\loss^{(n_k)}_T\lf(\hat\theta^{(n_k)} \mid \mathcal{D}^{(n_k)}_T\ri)& = \loss^{(n_k)}_T\lf(\hat\theta^{(n_k)} \mid \mathcal{D}^{(n_k)}_T\ri) - \loss_T\lf(\hat\theta^{(n_k)}, \theta^*\ri) + \loss_T\lf(\hat\theta^{(n_k)}, \theta^*\ri),
 \end{align*} 
and observe that by \eqref{eq:loss-conv}
\begin{align*}
\Big|\loss^{(n_k)}_T&\lf(\hat\theta^{(n_k)}(\om_0) \mid \mathcal{D}^{(n_k)}_T(\om_0)\ri) - \loss_T  \lf(\hat\theta^{(n_k)}(\om_0), \theta^*\ri)(\om_0) \Big|\\
& \leq \  \sup_{\theta \in \Theta} \lf|\loss^{(n_k)}_T(\theta \mid \mathcal{D}^{(n_k)}_T(\om_0)) - \loss_T(\theta, \theta^*)(\om_0)\ri|  \stackrel{k\rt 0}\Rt 0.
\end{align*}
Because of \eqref{eq:loss-est-conv}, we conclude that 
$$ \loss^{(n_k)}_T\lf(\hat\theta^{(n_k)}(\om_0) \mid \mathcal{D}^{(n_k)}_T(\om_0)\ri) \stackrel{k\rt \infty}\Rt \loss_T(\theta'(\om_0),\theta^*)(\om_0). $$
On the other hand, using the fact that $\hat\theta^{(n)}(\om_0)$ is a minimizer of $\loss^{(n)}_T(\cdot \mid \mathcal{D}^{(n)}_T(\om_0))$ and \eqref{eq:loss-conv}, we have
$$\loss^{(n_k)}_T\lf(\hat\theta^{(n_k)}(\om_0)\mid \mathcal{D}^{(n_k)}_T(\om_0)\ri) \leq  \loss^{(n_k)}_T\lf(\theta^* \mid \mathcal{D}^{(n_k)}_T(\om_0)\ri)\stackrel{k\rt \infty}\Rt \loss_T(\theta^*,\theta^*)(\om_0)=0, \quad \PP_{\bm{\kappa}^*}\text{\ -\ } a.s $$
It follows that $\loss_T(\theta'(\om_0), \theta^*)(\om_0) =0$, and thus by Step 2, $\theta'(\om_0) = \theta^*$.
\end{proof}

\subsection{Numerical Experiments}

% Based on our rigorous proofs for identifiability and consistency of the model parameters $\theta$, we use the reduced ODE model in \eqref{eq:FLLN_Glyco}-\eqref{eq:FLLN-ODE-fun} to perform parameter estimation.
% In many experimental settings, only slowly varying species are observable, while fast intermediate species are not directly measured~\cite{Pantea:2013:QCK,Acker:2014:CDR}. For the glycolytic mechanism considered here, we assume that molecular counts of $A_1$ and $A_2$ are observed at discrete time points, whereas the remaining chemical species are unobserved.  

For our numerical experiment, we consider the full CTMC model of the glycolytic pathway involving $10$ species, \eqref{eq:prop-X} and \eqref{eq:full_gly_unscaled_eq}, with true reaction rates given by 
$\kappa^{(n)}_0=0.5n$, 
$\kappa^{(n)}_1=1$,
$\kappa^{(n)}_{-1}=1.94n$,
$\kappa^{(n)}_2=0.06n$,
$\kappa^{(n)}_3=5$,
$\kappa^{(n)}_{-3}=4.6n$,
$\kappa^{(n)}_4=0.4n$,
$\kappa^{(n)}_5=1$,
$\kappa^{(n)}_{-5}=1.7n$,
$\kappa^{(n)}_6=0.3n$,
$\kappa^{(n)}_7=\sqrt{n}$,
$\kappa^{(n)}_{-7}=1.7321\sqrt{n}$,
$\kappa^{(n)}_{8}=n^{-1}$, and
$\kappa^{(n)}_{-8}=1.7321n^{-1}$.
Time-series data were generated by stochastic simulations of \eqref{eq:full_gly_unscaled_eq} using Gillespie's exact Stochastic Simulation Algorithm~\cite{Gillespie:1977:ESS,Gillespie:2007:SSC}. Five scaling regimes were considered, with the scaling parameters $n$ ranging from small to large values. 

For each scaling regime, a multi-start optimization procedure was applied to minimize the squared difference between the observed data and the ODE trajectories produced using a candidate parameter set. 
The optimization was performed using a modified Nelder-Mead method within a constrained parameter region, implemented in Julia.
The optimization procedure was repeated $m$ times ($m=3000$), each time using a different initial parameter vector as the starting point in order to increase the likelihood of identifying the global solution. Initial parameter values were sampled from prescribed parameter ranges using Latin hypercube sampling, ensuring that the entire parameter space was explored with approximately uniform coverage. 

Table 1 reports the best parameter estimates obtained from the multi-start algorithm for $n=10$, $10^2$, $10^3$, $10^4$, and $10^5$. As expected, the accuracy of the parameter estimates based on the reduced ODE model increases with $n$. Alongside the optimal estimates, we report the associated relative standard deviations (SD), defined as $\sigma_i/\bar{\hat{\theta}}_i$, where $\bar{\hat{\theta}}_i = \f{1}{m'}\sum_{j=1}^{m'} \hat \theta_{j,i}$ denotes the sample mean and $\sigma_i =\f{1}{\sqrt{m'-1}} \sqrt{\sum_{j=1}^{m'} (\hat\theta_{j,i} - \bar{\hat{\theta}}_i )^2}$ denotes the sample standard deviation of the $i$th parameter across the $m'$ optimization runs. An optimization run is classified as converged if it satisfies 
$$ 
\f{|\mathcal{L}-\mathcal{L}_{\text{best}}|}{|\mathcal{L}_{\text{best}}|}\le 0.1, 
$$ 
where $\mathcal{L}_{\text{best}}$ denotes the minimum loss value across all $m$ runs. Here, $m'$ represents the number of converged optimizations among the total $m$ runs. 

The relative errors are computed as
\begin{eqnarray*}
\frac{\|\theta_{\text{true}}-\theta_{\text{estimated}}\|_1}{\|\theta_{\text{true}}\|_1}\times 100\%.
\end{eqnarray*}
For $n=10$, $n=10^2$, and $n=10^3$, the relative errors are $63.9\%$, $31.1\%$, and $37.9\%$, respectively. As $n$ increases to $n=10^4$ and $n=10^5$, the relative errors decrease substantially to $7.2\%$ and $2.8\%$, respectively.
These results indicate that the relative error falls below $10\%$ when the scaling parameter is sufficiently large, specifically for $n=10^4$ and $n=10^5$.

%% constrained
\begin{table}[h!]
\begin{tabular}{|c|c|c|c|c|c|c|c|c|c|}
\hline
Cases & Parameters & $\kappa_0$ & $K_1$ & $K_{M_1}$ & $K^\star_{M_1}$ & $K_{M_2}$ & $\consconst^{\bullet}_1$ & $\consconst^{\star}_1$ & $\consconst^{\bullet}_2$ \\
\hline
\hline
& True values & $0.5$ & $3.0$ & $2.0$ & $1.0$ & $2.0$ & $0.3$ & $2.0$ & $1.5$ \\
& Intervals & $[0.01,1]$ & $[0.01,4]$ & $[0.01,3]$ & $[0.01,2]$ & $[0.01,3]$ & $[0.01,1]$ & $[0.01,3]$ & $[0.01,2]$ \\
\hline
$n=10^1$ & Estimate & $0.73$ & $0.03$ & $1.89$ & $1.15$ & $0.11$ & $0.43$ & $0.88$ & $0.93$ \\
 & Relative SD & $\pm 0.41$ & $\pm 0.83$ & $\pm 0.57$ & $\pm 0.87$ & $\pm 0.58$ & $\pm 0.62$ & $\pm 0.26$ & $\pm 0.18$ \\
\hline
$n=10^2$ & Estimate & $0.70$ & $2.04$ & $2.82$ & $0.30$ & $1.80$ & $0.48$ & $1.58$ & $1.84$ \\
 & Relative SD & $\pm 0.13$ & $\pm 0.37$ & $\pm 0.33$ & $\pm 0.46$ & $\pm 0.32$ & $\pm 0.19$ & $\pm 0.10$ & $\pm 0.13$ \\
\hline
$n=10^3$ & Estimate & $0.77$ & $1.79$ & $0.56$ & $0.69$ & $0.85$ & $0.46$ & $1.90$ & $1.46$ \\
& Relative SD & $\pm 0.16$ & $\pm 0.16$ & $\pm 0.40$ & $\pm 0.17$ & $\pm 0.19$ & $\pm 0.18$ & $\pm 0.06$ & $\pm 0.06$ \\
\hline
$n=10^4$ & Estimate & $0.53$ & $2.89$ & $1.52$ & $0.92$ & $1.84$ & $0.30$ & $1.98$ & $1.50$ \\
& Relative SD & $\pm 0.07$ & $\pm 0.04$ & $\pm 0.49$ & $\pm 0.08$ & $\pm 0.07$ & $\pm 0.06$ & $\pm 0.03$ & $\pm 0.03$ \\
\hline
$n=10^5$ & Estimate & $0.50$ & $2.97$ & $1.80$ & $1.04$ & $2.03$ & $0.29$ & $2.01$ & $1.51$ \\
 & Relative SD & $\pm 0.07$ & $\pm 0.07$ & $\pm 0.29$ & $\pm 0.13$ & $\pm 0.12$ & $\pm 0.07$ & $\pm 0.02$ & $\pm 0.01$ \\
\hline
\end{tabular}
\caption{Parameter estimation of eight parameters using a multi-start optimization algorithm with $3000$ initial points. The estimated values are provided with the best parameter and relative standard deviations of the converged values.}
\end{table}

\setcounter{section}{0}
\setcounter{theorem}{0}
\setcounter{equation}{0}
\renewcommand{\theequation}{\thesection.\arabic{equation}}

\appendix
\section{Appendix}	
  \begin{definition} \label{def:C-tight}
        A collection of stochastic processes $\{\nU :n \ge 1\}$ is said to be $C$-tight in $D([0, T], \R^d)$ if the collection is tight and hence relatively compact  in $D([0, T], \R^d)$, and the limit of every weakly convergent subsequence lies in the space $C([0, T], \R^d)$. 
    \end{definition}

\begin{lemma}\label{lem:int-eq-tight}
Let $\{Y^{(n)}\}$ be a sequence of $\R^d$-valued processes and define the process $V^{(n)}$ by 
     $$V^{(n)}(t)=\int_0^t Y^{(n)}(s) \ ds.$$
Assume that the  $\{\sup_{t\leq T}\|Y^{(n)}(t)\|\}$ is tight in $[0,\infty)$ Then $\{ V^{(n)}\}$ is tight in $C([0,T],\R^d)$. \end{lemma}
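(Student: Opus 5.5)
The plan is to use the Arzelà--Ascoli characterization of relative compactness in $C([0,T],\R^d)$, together with Prohorov's theorem. By \cite[Theorem 7.3]{Billingsley1999Convergence}, tightness of $\{V^{(n)}\}$ in $C([0,T],\R^d)$ follows from two conditions.

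First, the sequence $\{V^{(n)}(0)\}$ must be tight. This is trivial, since $V^{(n)}(0)=0$ for every $n$.

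Second, for every $\eta>0$ and $\vep>0$ we need a $\delta\in(0,1)$ and an $n_0$ such that for all $n\geq n_0$,
\[
\PP\Big(\sup_{|t_1-t_2|\leq\delta,\ t_1,t_2\leq T}\|V^{(n)}(t_1)-V^{(n)}(t_2)\|\geq\eta\Big)\leq\vep .
\]
To obtain this, I will bound the modulus of continuity pathwise. For $t_1\leq t_2\leq T$ with $t_2-t_1\leq\delta$,
\[
\|V^{(n)}(t_2)-V^{(n)}(t_1)\|\leq\int_{t_1}^{t_2}\|Y^{(n)}(s)\|\,ds\leq \delta\,\sup_{s\leq T}\|Y^{(n)}(s)\| .
\]
Hence the modulus of continuity $w(V^{(n)},\delta)$ is at most $\delta M_n$, where $M_n\dfeq\sup_{s\leq T}\|Y^{(n)}(s)\|$. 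Note also that each $V^{(n)}$ indeed has continuous paths. This uses that $M_n<\infty$ a.s., which is implied by tightness, and assumes $Y^{(n)}$ is measurable so that the integral makes sense.

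Next I use tightness of $\{M_n\}$. Given $\vep$, pick $K$ with $\sup_n\PP(M_n>K)\leq\vep$. Then choose $\delta<\eta/K$. On the event $\{M_n\leq K\}$ the modulus satisfies $w(V^{(n)},\delta)\leq\delta K<\eta$. Therefore
\[
\PP\big(w(V^{(n)},\delta)\geq\eta\big)\leq\PP(M_n>K)\leq\vep
\]
uniformly in $n$, which is the required condition.

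There is no real obstacle here; the only point needing care is the citation of the correct tightness criterion in $C$. If one prefers to avoid that criterion, a direct argument also works. The set $\{v\in C([0,T],\R^d): v(0)=0,\ \mathrm{Lip}(v)\leq K\}$ is compact by Arzelà--Ascoli. Moreover, $V^{(n)}$ lies in this set whenever $M_n\leq K$. Tightness then follows immediately from the choice of $K$.
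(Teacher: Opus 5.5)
Your proof is correct and follows essentially the same route as the paper: Billingsley's Theorem 7.3, the trivial tightness of $V^{(n)}(0)=0$, and the pathwise bound $w(V^{(n)},\delta)\le \delta\sup_{t\le T}\|Y^{(n)}(t)\|$ combined with tightness of that supremum. Your alternative via the Arzel\`a--Ascoli compact set of $K$-Lipschitz paths vanishing at $0$ is also valid and slightly more direct.
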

\begin{proof} 
Consider the modulus of continuity of $V^{(n)}$ given by
\begin{align*}
   \modu(V^{(n)},T,\delta) \dfeq \sup_{\substack{t_1,t_2\in [0,T], \\ |t_2-t_1|\leq \delta}}\|V^{(n)}(t_1)-V^{(n)}(t_2)\|. 
\end{align*}
Since the sequence $\{\|V^{(n)}(0)\| \equiv 0\}$ is trivially tight, by \cite[Theorem 7.3]{Billingsley1999Convergence}, we need to show that for $\vep>0,$ $ \lim_{\delta \rt 0}  \limsup_{n\rt \infty}\PP\lf(\modu(V^{(n)},T,\delta)\geq \vep\ri)=0.$
%\begin{align}
% \label{eq:modulus-of-cont}
% \lim_{\delta \rt 0}  \limsup_{n\rt \infty}\PP\lf(\modu(V^{(n)},T,\delta)\geq \vep\ri)=0.
%\end{align}
But this is an immediate consequence of the inequality, $\modu(V^{(n)},T,\delta) \leq \sup_{t\leq T}\|Y^{(n)}(t)\|\delta,$ and the assumption of tightness of $\{\sup_{t\leq T}\|Y^{(n)}(t) \|\}$.
%By tightness of $\{\sup_{t\leq T}|U^{(n)}(t)|\}$ in $[0,\infty)$, find $K(\vep)$ such that for all $n>0$, $$\PP\lf(\sup_{t\leq T}|U^{(n)}(t)| > K(\vep)\ri) \leq \vep.$$
%Now clearly, $\modu(V^{(n)},T,\delta) \leq \sup_{t\leq T}|U^{(n)}(t)|\delta.$
%Choose $\delta>0$ small enough such that $\eta\delta^{-1} \geq K(\vep)$. It now follows that for all $n>0$,
%\begin{align*}
%    \PP\lf(\modu(V^{(n)},T,\delta)\geq \eta\ri) \leq&\ \PP\lf(\sup_{t\leq T}|U^{(n)}(t)| \geq \eta\delta^{-1}\ri) \leq  \PP\lf(\sup_{t\leq T}|U^{(n)}(t)| \geq R(\vep)\ri)\\
%    \leq&\ \vep.
%\end{align*}

\end{proof}

\begin{lemma}\label{cor:int-eq-tight}
For each $n\geq 0$, let $U^{(n)}, A^{(n)}$ be $\R^d$-valued c\`adl\`ag  processes and $B^{(n)}$ an $\R^{d\times d}$-valued c\`adl\`ag   process  satisfying
     $$U^{(n)}(t)=A^{(n)}(t)+\int_0^t B^{(n)}(s)U^{(n)}(s) \ ds.$$
Assume that the  $\{\sup_{t\leq T}\|A^{(n)}(t)\|\}$ and $\{\sup_{t\leq T}\|B^{(n)}(t)\|\}$ are tight in $[0,\infty)$ and $\{A^{(n)}\}$ is tight in $D([0,T],\R)$. Then $\{U^{(n)}\}$ is tight in $D([0,T],\R)$. If $\{A^{(n)}\}$ is $C$-tight in $D([0,T],\R)$, then so is $\{U^{(n)}\}$. 
\end{lemma}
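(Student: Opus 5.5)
The statement is a Gronwall-type tightness transfer: $U^{(n)}$ solves a linear integral equation driven by $A^{(n)}$. The plan has three steps: bound $\sup_{t\le T}\|U^{(n)}(t)\|$ by Gronwall's inequality, bound the modulus of $U^{(n)}$ by that of $A^{(n)}$ plus a Lipschitz term, and then invoke the tightness criteria in $D$ and $C$.

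\emph{Step 1 (sup bound).} Pathwise we have
\[
\|U^{(n)}(t)\|\le \sup_{s\le T}\|A^{(n)}(s)\|+\sup_{s\le T}\|B^{(n)}(s)\|\int_0^t\|U^{(n)}(s)\|\,ds .
\]
Since $U^{(n)}$ is c\`adl\`ag, it is bounded on $[0,T]$, so Gronwall's inequality applies and gives
\[
\sup_{t\le T}\|U^{(n)}(t)\|\le \sup_{s\le T}\|A^{(n)}(s)\|\,\exp\Big(T\sup_{s\le T}\|B^{(n)}(s)\|\Big).
\]
A continuous function of two tight sequences is tight, so $\{\sup_{t\le T}\|U^{(n)}(t)\|\}$ is tight.

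\emph{Step 2 (modulus control).} Write $U^{(n)}=A^{(n)}+V^{(n)}$ with $V^{(n)}(t)=\int_0^t Y^{(n)}(s)\,ds$ and $Y^{(n)}=B^{(n)}U^{(n)}$. Then
\[
\sup_{t\le T}\|Y^{(n)}(t)\|\le \sup_{t\le T}\|B^{(n)}(t)\|\,\sup_{t\le T}\|U^{(n)}(t)\|,
\]
which is tight by Step 1. By \Cref{lem:int-eq-tight}, $\{V^{(n)}\}$ is tight in $C([0,T],\R^d)$, and hence $C$-tight in $D$.

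\emph{Step 3 (conclusion).} Adding a $C$-tight sequence to a tight sequence in $D$ preserves tightness: a continuous perturbation does not disturb the jump structure. Concretely, Billingsley's criterion uses the modulus $w'$, and
\[
w'(A+V,\delta)\le w'(A,\delta)+2\,\modu(V,T,\delta),
\]
while the sup bounds add. Thus $\{U^{(n)}\}$ is tight. If $\{A^{(n)}\}$ is moreover $C$-tight, then $(A^{(n)},V^{(n)})$ is jointly tight with continuous limit points. Addition is continuous at pairs of continuous functions, and every limit of $U^{(n)}$ is the sum of continuous limits, so $\{U^{(n)}\}$ is $C$-tight as well.

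The main subtlety is Step 3. Addition is not continuous on $D\times D$ in general, so we cannot conclude directly from marginal tightness. This is why we use the modulus inequality with $w'$, or equivalently the joint tightness of $(A^{(n)},V^{(n)})$ together with continuity of $V$, which makes addition continuous at the relevant limit points. Step 1 needs only the pathwise boundedness of $U^{(n)}$ on $[0,T]$, which holds because its paths are c\`adl\`ag.
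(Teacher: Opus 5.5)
Your proof is correct and follows the paper's route. The paper's proof is exactly your Steps 1 and 2 (Gronwall makes $\{\sup_{t\le T}\|U^{(n)}(t)\|\}$ tight, then \Cref{lem:int-eq-tight} is applied to $B^{(n)}U^{(n)}$), and it leaves implicit your Step 3 on adding a $C$-tight sequence to a tight sequence in $D$. Your $w'$ inequality holds because the factor $2$ absorbs refining a $\delta$-sparse partition into pieces of length at most $2\delta$, and your joint-tightness and continuous-mapping argument correctly settles the $C$-tight case.
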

\begin{proof}
By Gronwall's inequality, $\{\sup_{t\leq T}\|U^{(n)}(t)\|\}$ is tight, and the assertion follows from \Cref{lem:int-eq-tight} 
\end{proof}

\begin{lemma}
    \label{lem:sup-mart-tight}
 Let $\{\mart^{(n)}\}$ be a sequence of real-valued square integrable martingales such that $\{\<\mart^{(n)}\>_T\}$ is tight in $[0,\infty).$ Then, the sequence of random variables  $\sup_{t\leq T}|\mart^{(n)}(t)|$ is tight in $[0,\infty).$   Moreover, if $\<\mart^{(n)}\>_T \prt 0$  then $\sup_{t\leq T}|\mart^{(n)}(t)| \prt 0$ as $n\rt \infty$.
\end{lemma}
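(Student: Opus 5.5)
The plan is to prove the two assertions separately: first tightness of $\sup_{t\le T}|\mart^{(n)}(t)|$, then its convergence to zero in probability. Both rest on the Lenglart (Lenglart--Rebolledo) domination inequality. The quadratic variation $[\mart^{(n)}]$ need not be predictable, but $|\mart^{(n)}|^2$ is a nonnegative submartingale dominated by the predictable increasing process $\<\mart^{(n)}\>$, in the sense that $\EE|\mart^{(n)}(\tau)|^2 = \EE\<\mart^{(n)}\>_\tau$ for every bounded stopping time $\tau$. Lenglart's inequality, together with Doob's maximal inequality applied after stopping at the first time $\<\mart^{(n)}\>$ exceeds a level, then gives, for all $\eta,\vep>0$,
\begin{align*}
\PP\Big(\sup_{t\le T}|\mart^{(n)}(t)|\ge \eta\Big)\ \le\ \frac{\vep}{\eta^2}+\PP\big(\<\mart^{(n)}\>_T\ge \vep\big).
\end{align*}

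To justify the display directly, I will introduce the stopping time $\tau_\vep^{(n)} = \inf\{t:\<\mart^{(n)}\>_t \ge \vep\}\wedge T$. Because $\<\mart^{(n)}\>$ is predictable, $\tau_\vep^{(n)}$ is predictable and can be announced by a sequence $\sigma_k\uparrow \tau_\vep^{(n)}$. Doob's $L^2$ inequality applied to the martingale stopped at $\sigma_k$ gives
\[
\PP\big(\sup_{t\le\sigma_k}|\mart^{(n)}(t)|\ge\eta\big)\le \eta^{-2}\EE\<\mart^{(n)}\>_{\sigma_k}\le \vep\eta^{-2},
\]
where the last bound uses $\<\mart^{(n)}\>_{\sigma_k}<\vep$ on $\{\sigma_k<\tau^{(n)}_\vep\}$. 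Letting $k\to\infty$ controls the supremum over $[0,\tau^{(n)}_\vep)$. On the event $\{\<\mart^{(n)}\>_T<\vep\}$ we have $\tau^{(n)}_\vep=T$ and no stopping occurs, so the display follows; a possible jump exactly at $T$ is absorbed by the standard version of Lenglart's inequality. The announcing-sequence step is the one technical point, and if it becomes awkward I will simply cite Lenglart's inequality in the form given in Jacod--Shiryaev (Lemma I.3.30).

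For tightness, fix $\delta>0$. Tightness of $\{\<\mart^{(n)}\>_T\}$ gives $K$ with $\sup_n\PP(\<\mart^{(n)}\>_T\ge K)\le\delta/2$. Applying the display with $\vep=K$ and $\eta^2 = 2K/\delta$ yields $\sup_n\PP(\sup_{t\le T}|\mart^{(n)}(t)|\ge\eta)\le\delta$, which is tightness.

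For the second assertion, fix $\eta$ and $\delta$. Choose $\vep=\delta\eta^2/2$, and use $\<\mart^{(n)}\>_T\prt 0$ to make the second term of the display at most $\delta/2$ for all large $n$. Then $\limsup_n\PP(\sup_{t\le T}|\mart^{(n)}(t)|\ge\eta)\le\delta$, and since $\delta$ is arbitrary, $\sup_{t\le T}|\mart^{(n)}(t)|\prt 0$.
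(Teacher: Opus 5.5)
Your proposal is correct and is essentially the paper's proof. The paper likewise invokes the Lenglart--Rebolledo inequality (citing Whitt) and, for tightness, picks $K_1$ with $\sup_n\PP(\langle\mart^{(n)}\rangle_T>K_1)\le\varepsilon/2$ and threshold $K_2=(2K_1/\varepsilon)^{1/2}$, which is exactly your choice $\eta^2=2K/\delta$.

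One small repair is needed if you keep the self-contained derivation of the display. Truncating at $T$ before announcing loses $|\mart^{(n)}(T)|$, because on $\{\langle\mart^{(n)}\rangle_T<\varepsilon\}$ every $\sigma_k<T$. Instead, announce the untruncated predictable time $S=\inf\{t:\langle\mart^{(n)}\rangle_t\ge\varepsilon\}$, which satisfies $S>T$ on that event. Then apply Doob at $T\wedge\sigma_k$, and conclude with Fatou, since $T\wedge\sigma_k=T$ for all large $k$ on that event.
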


\begin{proof}
    Let $\vep>0$. By the tightness of $\{\<\mart^{(n)}\>_T\}$, choose $K_1(\vep)$ such that 
    $$\sup_n \PP\lf(\<\mart^{(n)}\>_T>K_1(\vep)\ri) \leq \vep/2.$$ Let $K_2(\vep) \equiv \vep^{-1/2} (2K_1(\vep))^{1/2}$. By Lenglart--Rebolledo inequality  \cite[Lemma 3.7]{Whitt2007}, for all $n>0$,
    \begin{align}\label{eq:L-R-ineq}
       \PP\lf(\sup_{t\leq T}|\mart^{(n)}(t)|>K_2(\vep)\ri)\leq&\  K_1(\vep)/K_2^2(\vep)+ \PP\lf(\<\mart^{(n)}\>_T>K_1(\vep)\ri)     \leq \ \vep/2+\vep/2=\vep.
    \end{align}
The second part follows by similar techniques.
\end{proof}

\bibliographystyle{plain}
\bibliography{glycolytic}

\begin{thebibliography}{10}

\bibitem{Anderson:2011:CTM}
David~F. Anderson and Thomas~G. Kurtz.
\newblock Continuous time markov chain models for chemical reaction networks.
\newblock In {\em Design and analysis of biomolecular circuits: engineering
  approaches to systems and synthetic biology}, pages 3--42. Springer, 2011.

\bibitem{Ball:2006:AAM}
Karen Ball, Thomas~G. Kurtz, Lea Popovic, and Greg Rempala.
\newblock Asymptotic analysis of multiscale approximations to reaction
  networks.
\newblock {\em Annals of Applied Probability}, 16(4):1925--1961, 2006.

\bibitem{Billingsley1999Convergence}
Patrick Billingsley.
\newblock {\em Convergence of probability measures}.
\newblock John Wiley \& Sons, 1999.

\bibitem{Crudu2012AAP}
A.~Crudu, A.~Debussche, A.~Muller, and O.~Radulescu.
\newblock Convergence of stochastic gene networks to hybrid piecewise
  deterministic processes.
\newblock {\em The Annals of Applied Probability}, 22(5), 10 2012.

\bibitem{Flach:2006:UAQ}
Edward~H Flach and Santiago Schnell.
\newblock Use and abuse of the quasi-steady-state approximation.
\newblock {\em IEE Proceedings-Systems Biology}, 153(4):187--191, 2006.

\bibitem{foll99}
Gerald~B Folland.
\newblock {\em Real analysis: modern techniques and their applications}.
\newblock John Wiley \& Sons, 1999.

\bibitem{FrWe08}
Mark~I. Freidlin and Alexander~D. Wentzell.
\newblock Some recent results on averaging principle.
\newblock In {\em Topics in stochastic analysis and nonparametric estimation},
  volume 145 of {\em IMA Vol. Math. Appl.}, pages 1--19. Springer, New York,
  2008.

\bibitem{FrWe12}
Mark~I. Freidlin and Alexander~D. Wentzell.
\newblock {\em Random perturbations of dynamical systems}, volume 260 of {\em
  Grundlehren der mathematischen Wissenschaften [Fundamental Principles of
  Mathematical Sciences]}.
\newblock Springer, Heidelberg, third edition, 2012.
\newblock Translated from the 1979 Russian original by Joseph Sz\"ucs.

\bibitem{GaKh26}
Arnab Ganguly and Wasiur~R. KhudaBukhsh.
\newblock Asymptotic analysis of the total quasi-steady state approximation for
  the michaelis–menten enzyme kinetic reactions.
\newblock {\em Journal of Mathematical Analysis and Applications},
  561(1):130551, 2026.

\bibitem{Gillespie:1977:ESS}
Daniel~T. Gillespie.
\newblock Exact stochastic simulation of coupled chemical reactions.
\newblock {\em The journal of physical chemistry}, 81(25):2340--2361, 1977.

\bibitem{Gillespie:2007:SSC}
Daniel~T. Gillespie.
\newblock Stochastic simulation of chemical kinetics.
\newblock {\em Annu. Rev. Phys. Chem.}, 58(1):35--55, 2007.

\bibitem{Goldbeter:2018:DSB}
Albert Goldbeter.
\newblock Dissipative structures in biological systems: bistability,
  oscillations, spatial patterns and waves.
\newblock {\em Philosophical Transactions of the Royal Society A: Mathematical,
  Physical and Engineering Sciences}, 376(2124):20170376, 2018.

\bibitem{Khas68}
R.~Z. Hasminski{\u i}.
\newblock On the principle of averaging the {I}t\^o's stochastic differential
  equations.
\newblock {\em Kybernetika (Prague)}, 4:260--279, 1968.

\bibitem{Higgins:1967:TOR}
Joseph Higgins.
\newblock The theory of oscillating reactions-kinetics symposium.
\newblock {\em Industrial \& Engineering Chemistry}, 59(5):18--62, 1967.

\bibitem{Kang:2013:STM}
Hye-Won Kang and Thomas~G. Kurtz.
\newblock Separation of time-scales and model reduction for stochastic reaction
  networks.
\newblock {\em Annals of Applied Probability}, 23(2):529--583, 2013.

\bibitem{Kim:2020:MMM}
Jae~Kyoung Kim and John~J. Tyson.
\newblock Misuse of the michaelis--menten rate law for protein interaction
  networks and its remedy.
\newblock {\em PLOS Computational Biology}, 16(10):e1008258, 2020.

\bibitem{Kurtz72}
Thomas~G. Kurtz.
\newblock The relationship between stochastic and deterministic models for
  chemical reactions.
\newblock {\em The Journal of Chemical Physics}, 57(7):2976--2978, 1972.

\bibitem{Kurtz:1992:AMP}
Thomas~G. Kurtz.
\newblock Averaging for martingale problems and stochastic approximation.
\newblock In Ioannis Karatzas and Daniel Ocone, editors, {\em Applied
  Stochastic Analysis}, pages 186--209, Berlin, Heidelberg, 1992. Springer
  Berlin Heidelberg.

\bibitem{Novak:2008:DPB}
B{\'e}la Nov{\'a}k and John~J. Tyson.
\newblock Design principles of biochemical oscillators.
\newblock {\em Nature reviews Molecular cell biology}, 9(12):981--991, 2008.

\bibitem{Othmer:1978:ECD}
Hans~G Othmer and John~A Aldridge.
\newblock The effects of cell density and metabolite flux on cellular dynamics.
\newblock {\em Journal of Mathematical Biology}, 5(2):169--200, 1978.

\bibitem{PaVe01}
E.~Pardoux and A.~Yu. Veretennikov.
\newblock On the {P}oisson equation and diffusion approximation. {I}.
\newblock {\em Ann. Probab.}, 29(3):1061--1085, 2001.

\bibitem{PaVe02}
\`E. Pardoux and A.~Yu. Veretennikov.
\newblock On {P}oisson equation and diffusion approximation. {II}.
\newblock {\em Ann. Probab.}, 31(3):1166--1192, 2003.

\bibitem{Segel:1989:QSS}
Lee~A Segel and Marshall Slemrod.
\newblock The quasi-steady-state assumption: a case study in perturbation.
\newblock {\em SIAM review}, 31(3):446--477, 1989.

\bibitem{Selkov:1968:SOG}
Evgeny~Evgenievich SEL'KOV.
\newblock Self-oscillations in glycolysis 1. a simple kinetic model.
\newblock {\em European Journal of Biochemistry}, 4(1):79--86, 1968.

\bibitem{Whitt2007}
Ward Whitt.
\newblock Proofs of the martingale {FCLT}.
\newblock {\em Probability Surveys}, 4:268--302, 2007.

\end{thebibliography}
\end{document}